\newtheorem{theorem}{Theorem}[section]
\newtheorem{proposition}[theorem]{Proposition}
\newtheorem{lemma}[theorem]{Lemma}
\newtheorem{definition}[theorem]{Definition}
\newtheorem{corollary}[theorem]{Corollary}
\newtheorem{remark}{Remark}[section]
\newtheorem{example}[theorem]{Example}
\numberwithin{equation}{section}
\newcommand{\rr}{{\mathbb R}}
\newcommand{\zz}{{\mathbb Z}}
\newcommand{\nn}{{\mathbb N}}
\newcommand{\qq}{{\mathbb Q}}
\newcommand{\RR}{{\mathbb R}}
\newcommand{\ZZ}{{\mathbb Z}}
\newcommand{\NN}{{\mathbb N}}
\newcommand{\calS}{\mathcal{S}}
\newcommand{\dist}{\hbox{dist\,}}
\newcommand{\Fav}{\hbox{\rm Fav}}
\newcommand{\jfrak}{{\mathfrak J}}
\newcommand{\C}{\mathbb{C}}
\newcommand{\bbA}{\mathbb{A}}
\newcommand{\calm}{\mathcal{M}}
\newcommand{\calC}{\mathcal{C}}
\begin{document}

\begin{frontmatter}[classification=text]
%% EDITOR: this will force the keywords to appear right after the Abstract.
%%   If the abstract is too long and would force the keywords off the
%%   front page, please comment out % [classification=text] above
%%   This way the keywords will be floated on the bottom of the first page
%%   even though the Abstract spills over to the next page.

%%% AUTHOR: Title goes here.  This line is optional.  You must use it
%%   if title has footnote attached or requires nontrivial typesetting,
%%   e.g., inclusion of linebreaks to force nice layout.
%%\title{Short Proof of R\"odl's $n^{\log\log n}$ Bound\footnote{This is a footnote to the title}} %% please capitalize all significant words

%%% AUTHOR:
%%% List all authors. If you wish, place grant acknowledgements in \thanks.
%%% In brackets include a short tag for each author.
\author[ilaba]{Izabella {\L}aba\thanks{Supported by NSERC Discovery Grant 22R80520}}
\author[cmarshall]{Caleb Marshall\thanks{Supported by NSERC Discovery Grants 22R80520 and GR010263}}

%%% AUTHOR: Abstract goes here
\begin{abstract}
We improve a special case of the Lam-Leung lower bound \cite{LL} on the number of elements in a vanishing sum of $N$-th roots of unity.  Using this result, we extend the Favard length estimates due to Bond, {\L}aba, and Volberg \cite{BLV} to a new class of rational product Cantor sets in $\mathbb{R}^2$.
\end{abstract}
\end{frontmatter}

%%% AUTHOR: body of paper starts here
\section{Introduction}
A {\em vanishing sum of roots of unity} is an expression of the form
\begin{equation}\label{vanishing}
z_1+\dots+z_k=0,
\end{equation}
where $z_1,\dots,z_k$ are $N$-th roots of unity for some $N\in\NN$.  In general, we do not require the $z_j$ to be distinct or primitive $N$-th roots of unity.
Vanishing sums of roots of unity have been studied extensively in number theory, see e.g., \cite{CDK}, \cite{CJ}, \cite{CS}, \cite{deB}, \cite{LL}, \cite{Mann}, \cite{PR}, \cite{Re1}, \cite{Re2}, \cite{schoen}.  Of particular interest is the following result, taken from \cite{LL}.

\begin{theorem}\label{LL-thm}
Let $N=\prod_{j=1}^K p_j^{\alpha_j}$, where $p_1,\dots,p_K$ are distinct primes and $\alpha_1,\dots,\alpha_K\in\NN$. Suppose that (\ref{vanishing}) holds, where $z_1,\dots,z_k$ are $N$-th roots of unity. 
Then there exist nonnegative integers $a_1,\dots,a_K$ such that
$k=\sum_{j=1}^K a_jp_j$. In particular, $k\geq \min \{p_j\}$.
\end{theorem}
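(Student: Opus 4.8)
The plan is to reformulate \eqref{vanishing} as a statement about polynomials with nonnegative integer coefficients and then to induct on the number $K$ of distinct prime factors of $N$. Write $\zeta=e^{2\pi i/N}$ and $z_j=\zeta^{e_j}$; then \eqref{vanishing} asserts $f(\zeta)=0$ for $f(x)=\sum_{j=1}^{k}x^{e_j}\in\mathbb{Z}_{\ge 0}[x]$, and we must show that $k=f(1)$ lies in the numerical semigroup $\langle p_1,\dots,p_K\rangle:=\{a_1p_1+\dots+a_Kp_K:a_j\in\mathbb{Z}_{\ge 0}\}$. Replacing $f$ by its remainder modulo $x^N-1$ changes neither $f(\zeta)$, nor $f(1)$, nor the nonnegativity of the coefficients, so we may assume $\deg f<N$; equivalently, $f$ is a nonnegative integer combination of those elements of $\mathbb{Z}/N\mathbb{Z}$ lying in the kernel of the natural surjection $\mathbb{Z}[x]/(x^N-1)\to\mathbb{Z}[\zeta]$, $x\mapsto\zeta$. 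We also note $k\ge 2$, since a single root of unity is never $0$; once $k\in\langle p_1,\dots,p_K\rangle$ is established, the bound $k\ge\min_j p_j$ is immediate, as every nonzero element of that semigroup is $\ge\min_j p_j$.

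For the base case $K=1$, $N=p^\alpha$, I first dispose of $\alpha=1$: the minimal polynomial of $\zeta_p$ is $1+x+\dots+x^{p-1}$, so $f\in\mathbb{Z}_{\ge 0}[x]$ with $\deg f<p$ and $f(\zeta_p)=0$ must be a nonnegative integer multiple of $1+x+\dots+x^{p-1}$, whence $p\mid f(1)$. For general $\alpha$, write each exponent as $e=r+p^{\alpha-1}i$ with $0\le r<p^{\alpha-1}$ and $0\le i<p$, so that $f(x)=\sum_{r=0}^{p^{\alpha-1}-1}x^r g_r(x^{p^{\alpha-1}})$ with $g_r\in\mathbb{Z}_{\ge 0}[x]$, $\deg g_r<p$. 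Since $\zeta^{p^{\alpha-1}}=\zeta_p$ and $1,\zeta,\dots,\zeta^{p^{\alpha-1}-1}$ is a $\mathbb{Q}(\zeta_p)$-basis of $\mathbb{Q}(\zeta)$ (the two fields have degrees $p^{\alpha-1}(p-1)$ and $p-1$ over $\mathbb{Q}$, and $\zeta$ is a root of $X^{p^{\alpha-1}}-\zeta_p$ over $\mathbb{Q}(\zeta_p)$), the identity $0=f(\zeta)=\sum_r\zeta^r g_r(\zeta_p)$ forces $g_r(\zeta_p)=0$ for every $r$. The case $\alpha=1$ then gives $p\mid g_r(1)$, and summing over $r$ yields $p\mid k$, so we may take $a_1=k/p$.

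For the inductive step, assume the statement for all integers with fewer than $K$ distinct prime factors, and write $N=p^a t$ with $p=p_K$ prime, $p\nmid t$, so $t$ has $K-1$ distinct prime factors. Choosing compatible primitive roots so that $\zeta=\zeta_{p^a}\zeta_t$ and hence $\zeta^e=\zeta_{p^a}^e\zeta_t^e$, group the terms of $f(\zeta)=0$ by the residue of the exponent modulo $p^a$:
\[
0=f(\zeta)=\sum_{u \bmod p^a}\zeta_{p^a}^u\,h_u,\qquad h_u:=\sum_{e\equiv u\,(p^a)} c_e\,\zeta_t^e\in\mathbb{Z}[\zeta_t],
\]
where the $c_e\ge 0$ are the multiplicities of $f$ and $h_u$ is a nonnegative sum of $k_u$ roots of unity of order dividing $t$, with $\sum_u k_u=k$. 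Because $\Phi_{p^a}$ stays irreducible over $\mathbb{Q}(\zeta_t)$ (the field degree is $\varphi(p^a)=\deg\Phi_{p^a}$), the subspaces $V_j:=\Span_{\mathbb{Q}(\zeta_t)}\{\zeta_{p^a}^u:u\equiv j\,(p^{a-1})\}$, $0\le j<p^{a-1}$, are in direct sum inside $\mathbb{Q}(\zeta_N)$; projecting the displayed identity onto each $V_j$ and using that the only $\mathbb{Q}(\zeta_t)$-linear relation among $1,\zeta_p,\dots,\zeta_p^{p-1}$ is $\sum_i\zeta_p^i=0$, one obtains, for each $j$, $h_j=h_{j+p^{a-1}}=\dots=h_{j+(p-1)p^{a-1}}=:H_j$. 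Thus each $H_j\in\mathbb{Z}[\zeta_t]$ admits $p$ representations as nonnegative sums of $t$-th roots of unity, of sizes $n_{j,0},\dots,n_{j,p-1}$, with $k=\sum_j\sum_i n_{j,i}$; it remains to prove $\sum_{i=0}^{p-1}n_{j,i}\in\langle p_1,\dots,p_K\rangle$ for each $j$.

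This last step is the main obstacle. It cannot be handled by literally decomposing the original sum into rotated $\Phi_{p_i}$-relations — such a decomposition fails once $K\ge 3$ — so one must control only the cardinalities. Two partial handles suggest themselves: subtracting two representations of $H_j$ yields an integer relation $\sum_s d_s\zeta_t^s=0$ whose augmentation $\sum_s d_s$ equals $\Phi_t(1)$ times an integer, which pins down the sizes modulo a prime when $t$ is a prime power ($K=2$) but says nothing when $K\ge 3$; and forming $\sum_{i=0}^{p-1}\zeta_p^i H_j=H_j\sum_i\zeta_p^i=0$ exhibits a vanishing sum of $pt$-th roots of unity of length $\sum_i n_{j,i}$, though when $a=1$ this has the same prime support as $N$. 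Closing the argument should require a simultaneous induction — on $k$ as well as on $K$ — in which one also peels off $\Phi_p$-blocks coset by coset in $\mathbb{Z}/N\mathbb{Z}$ (each removal deleting exactly $p$ elements of the sum and preserving vanishing), and it is this bookkeeping that carries the genuine content of the theorem.
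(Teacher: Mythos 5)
The paper itself does not prove Theorem \ref{LL-thm} --- it is quoted from Lam--Leung \cite{LL} --- so your attempt can only be measured against the original source. Up to the point where you stop, your reductions are correct: the base case $K=1$ is complete (the $\alpha=1$ subcase by degree comparison with $\Phi_p$, the general subcase via the basis $1,\zeta,\dots,\zeta^{p^{\alpha-1}-1}$ of $\mathbb{Q}(\zeta_{p^\alpha})$ over $\mathbb{Q}(\zeta_p)$), and in the inductive step the coset decomposition, together with the fact that the only $\mathbb{Q}(\zeta_t)$-linear relation among $1,\zeta_p,\dots,\zeta_p^{p-1}$ is the all-ones relation, does force $h_j=h_{j+p^{a-1}}=\dots=H_j$. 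This is indeed how the Lam--Leung argument begins.

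However, the proof is not complete, and the missing step is not ``bookkeeping'': it is the entire content of the theorem. You have reduced matters to showing that if a single element $H_j\in\mathbb{Z}[\zeta_t]$ admits $p$ representations as nonnegative sums of $t$-th roots of unity, of weights $n_{j,0},\dots,n_{j,p-1}$, then $\sum_{i}n_{j,i}\in\langle p_1,\dots,p_K\rangle$ --- and you then correctly observe that neither of your two ``partial handles'' establishes this, and stop. When $H_j=0$ the inductive hypothesis applied to each representation separately does the job, since each $n_{j,i}$ is then itself the weight of a vanishing sum of $t$-th roots of unity; but when $H_j\neq 0$ a genuinely new input is required, for instance the Lam--Leung lemma that a nonzero element with two nonnegative representations of weights $n\geq n'$ satisfies $n-n'\in\langle p_1,\dots,p_{K-1}\rangle$, which combined with $\sum_i n_{j,i}=p\,n_{j,\min}+\sum_i(n_{j,i}-n_{j,\min})$ would close the induction. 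That lemma is immediate when $K-1=1$ (the difference of the two representations is an integer vanishing sum of $q^b$-th roots, hence has weight divisible by $q$), but for $K-1\geq 2$ it is of the same depth as the theorem itself and must be carried along in the induction; taking differences destroys nonnegativity, which is exactly why the augmentation argument only lands you in the group $\mathbb{Z}p_1+\dots+\mathbb{Z}p_{K-1}$ rather than the semigroup. As written, your argument reduces the theorem to its hardest sub-claim and leaves that sub-claim unproved, so there is a genuine gap. Note also that the multiset $\Xi$ of type $(R_5:2R_3)$ in Example \ref{7example} is a minimal vanishing sum of weight $7$ containing no full prime fiber, so the ``peel off $\Phi_p$-blocks coset by coset'' strategy you sketch in your last sentence cannot work in the naive form stated.
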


A key principle motivating our work is that (\ref{vanishing}) can be rephrased in terms of divisibility of polynomials by cyclotomic factors. 
Recall that the $s$-\textbf{th cyclotomic polynomial} $\Phi_s (X)$ is the unique monic, irreducible polynomial whose roots are the primitive $s$-th roots of unity.
In other words, $z\in\mathbb{C}$ is a root of $\Phi_s(z)=0$ if and only if we have $z=e^{2\pi i d/s}$ for some $d\in\ZZ$ with $(d,s)=1$. 
Alternatively, $\Phi_s$ can be defined inductively via the identity
\begin{equation}\label{poly-e0}
X^N-1=\prod_{s|N}\Phi_s(X).
\end{equation}
Thus $\Phi_1(X)=X-1$, and 
\begin{equation}\label{polysec-e0}
1+X+X^2+\dots+X^{N-1}=\prod_{s|N,s\neq 1}\Phi_s(X)\hbox{ for }N>1.
\end{equation} 
In particular, if $p$ is a prime number, then $\Phi_p(X)=1+X+\dots+X^{p-1}$.
Thus
$$
0=\Phi_p(e^{2\pi i/p})=1+e^{2\pi i /p} + \dots + e^{2\pi i (p-1)/p}
$$
provides an example of a vanishing sum of roots of unity. In general, however, such sums can be much more complicated.

Assuming that $z_1,\dots,z_k$ are (not necessarily primitive) $N$-th roots of unity,  we have $z_\ell=e^{2\pi i a_\ell/N}$ for some $a_\ell\in\ZZ$. 
Then (\ref{vanishing}) holds if and only if 
\begin{equation}\label{vanishing2}
A(e^{2\pi i /N})=0,
\end{equation}
where
$A(X)=\sum_{a=0}^{N-1} w(a) X^a$ and
$$
w(a) =|\{ \ell\in\{1,\dots,k\} : z_\ell = e^{2 \pi i a_\ell /N} \hbox{ and } \ a_\ell \equiv a\hbox{ mod }N\}|.
$$
Since $\Phi_N(X)$ is the minimal polynomial of $e^{2\pi i/N}$, it follows that (\ref{vanishing2}) holds if and only if $\Phi_N(X)|A(X)$. 

The lower bound in Theorem \ref{LL-thm} now takes the following form: suppose $\Phi_N(X)|A(X)$, where $A(X)$ is a polynomial with non-negative integer coefficients as above. Then 
\begin{equation}\label{LL-bound}
A(1)=\sum_{a=0}^{N-1} w(a)\geq \min_j p_j,
\end{equation}
where $\, p_1,\dots,p_K$ are the distinct prime factors of $N$.

We are interested in improvements to (\ref{LL-bound}) when $A(X)$ is assumed to have multiple cyclotomic divisors. In general, such improvements are not possible without some additional assumptions on the cyclotomic factors of $A(X)$. To demonstrate, consider the simple example
\begin{equation}\label{rescaled-fiber}
A(X)=1+X^{q^j}+X^{2q^j}+\dots +X^{(p-1)q^j}= \Phi_p(X^{q^j}),
\end{equation}
where $j\in\NN$ and $p<q$ are distinct primes. For each $\alpha\in\{0,1,\dots,j\}$, the number $e^{2\pi i q^{j-\alpha}/p}$ is a $p$-th primitive root of unity, therefore a root of $\Phi_p (X)$. Thus $A(e^{2\pi i/pq^\alpha })=0$, and consequently $\Phi_{pq^\alpha}(X)|A(X)$, for all $\alpha\in\{0,1,\dots,j\}$; yet, we still only have $A(1)=p$.  Since $j \in \NN$ is arbitrary, we can find polynomials $A(X) $ such that $A(1)$ attains the lower bound of Theorem \ref{LL-thm}, but $A(X)$ has as many cyclotomic factors as we like.

However, we do have an improvement in the following case.

\begin{theorem}\label{main theorem}
Let $A(X)$ be a polynomial with non-negative coefficients and distinct cyclotomic factors $\Phi_{m_1}(X), ..., \Phi_{m_k}(X)$.  
Assume that there exist distinct prime numbers $p,q$, and exponents $\alpha_j, \beta_j \in \NN\cup\{0\}$ such that $m_j = p^{\alpha_j} q^{\beta_j}$ for each $1 \leq j \leq k$.  Assume further that $q \nmid A(1)$.  Then we have the lower bound
$$
A(1) \geq p^{E_p}, \hbox{ where } E_p = |\{\alpha_1, ..., \alpha_k\}|.
$$
In words, $E_p$ denotes the number of \textbf{distinct} exponents $\alpha_i$ appearing among the $m_j = p^{\alpha_j} q^{\beta_j}$.
\end{theorem}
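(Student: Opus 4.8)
The plan is to pass to $\eff_q$ and reduce everything to a purely combinatorial statement about supports of polynomials. First, observe that the hypothesis $q\nmid A(1)$ forces $\alpha_j\geq 1$ for every $j$: otherwise some $m_j$ equals $1$ (so $A(1)=0$) or equals $q^{\beta_j}$ with $\beta_j\geq 1$ (so $\Phi_{q^{\beta_j}}(1)=q$ divides $A(1)$, using $\Phi_{q^{\beta_j}}\mid A$ over $\ZZ$), either way a contradiction. Next I would use the congruence
$$\Phi_{p^{\alpha}q^{\beta}}(X)\equiv \Phi_{p^{\alpha}}(X)^{\phi(q^{\beta})}\pmod q ,$$
valid for all $\alpha,\beta\geq 0$. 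This follows by comparing, over $\eff_q$, the identity $\prod_{a\leq\alpha}\Phi_{p^aq^\beta}(X)=\frac{X^{p^\alpha q^\beta}-1}{X^{p^\alpha q^{\beta-1}}-1}=(X^{p^\alpha}-1)^{\phi(q^\beta)}=\prod_{a\leq\alpha}\Phi_{p^a}(X)^{\phi(q^\beta)}$ (using $X^{q}-1=(X-1)^q$ in $\eff_q[X]$) for $\alpha$ and for $\alpha-1$, then peeling off the outermost factor, the polynomials $\Phi_{p^a}$ being separable and pairwise coprime mod $q$ since $q\neq p$. Reducing $A$ modulo $q$ and using $\Phi_{p^{\alpha_j}q^{\beta_j}}\mid A$, we get $\Phi_{p^{\alpha_j}}\mid\bar A$ in $\eff_q[X]$ for every $j$; letting $\alpha^{(1)},\dots,\alpha^{(E_p)}$ denote the distinct values among the $\alpha_j$, pairwise coprimality of the $\Phi_{p^{\alpha^{(i)}}}$ over $\eff_q$ gives $\prod_{i=1}^{E_p}\Phi_{p^{\alpha^{(i)}}}(X)\mid \bar A(X)$. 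Crucially, $\bar A(1)=\overline{A(1)}\neq 0$ in $\eff_q$.

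The heart of the matter is the following lemma, to be proved by induction on $r$: if $g\in\eff_q[X]$, $\delta_1>\dots>\delta_r\geq 1$, $\prod_{i=1}^r\Phi_{p^{\delta_i}}(X)\mid g(X)$, and $g(1)\neq 0$, then $\bigl|\supp(g)\bigr|\geq p^r$. For the inductive step I would fold $g$ modulo $X^{p^{\delta_1}}-1$, i.e.\ replace it by the polynomial $\tilde g$ of degree $<p^{\delta_1}$ obtained by summing coefficients in residue classes mod $p^{\delta_1}$. Then $\tilde g(1)=g(1)\neq 0$, $\bigl|\supp(\tilde g)\bigr|\leq\bigl|\supp(g)\bigr|$ (folding only merges terms), and $\tilde g$ is still divisible by $\prod_{i=1}^r\Phi_{p^{\delta_i}}$ because each factor divides $X^{p^{\delta_1}}-1$ (here is where it matters that we fold by the \emph{largest} power). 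Writing $\tilde g=\Phi_{p^{\delta_1}}(X)\,w(X)$, degrees force $\deg w<p^{\delta_1-1}$; since $\Phi_{p^{\delta_1}}(X)=\sum_{k=0}^{p-1}X^{kp^{\delta_1-1}}$, the expansion $\tilde g=\sum_{k}X^{kp^{\delta_1-1}}w(X)$ is a disjoint union of $p$ translated copies of $\supp(w)$, so $\bigl|\supp(\tilde g)\bigr|=p\,\bigl|\supp(w)\bigr|$. Finally $w$ is divisible by $\prod_{i=2}^r\Phi_{p^{\delta_i}}$ (coprimality) and $w(1)\neq 0$ (since $\Phi_{p^{\delta_1}}(1)=p\neq 0$ in $\eff_q$), so the inductive hypothesis gives $\bigl|\supp(w)\bigr|\geq p^{r-1}$, whence $\bigl|\supp(g)\bigr|\geq p^r$; the base case $r=1$ is the same computation with $\bigl|\supp(w)\bigr|\geq 1$. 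I expect this lemma to be the main obstacle — specifically, the idea that one must fold by the largest $p^{\delta_i}$ (so as to shrink the cofactor to degree $<p^{\delta_1-1}$ while keeping all the smaller cyclotomic factors), and the fact that $g(1)\neq 0$ is indispensable: without it $g=X^{p^{\delta_1}}-1$ has $\bigl|\supp(g)\bigr|=2$, far below $p^{\delta_1}$. This is exactly the role played by $q\nmid A(1)$.

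To conclude, apply the lemma to $g=\bar A$ with $r=E_p$ and $\{\delta_i\}=\{\alpha^{(i)}\}$, obtaining $\bigl|\supp(\bar A)\bigr|\geq p^{E_p}$. Writing $A(X)=\sum_a w(a)X^a$ with $w(a)\geq 0$, any $a$ at which the mod-$q$ reduction of $A$ is nonzero has $w(a)\geq 1$, so
$$A(1)=\sum_a w(a)\ \geq\ \bigl|\{a: w(a)\neq 0\}\bigr|\ \geq\ \bigl|\supp(\bar A)\bigr|\ \geq\ p^{E_p},$$
which is the desired bound. (The argument only uses that each $\Phi_{m_j}$ divides $A$, never that they exhaust the cyclotomic factors of $A$; this causes no loss, since additional cyclotomic factors would only enlarge the relevant exponent count.)
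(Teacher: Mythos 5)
Your proposal is correct, but it takes a genuinely different route from the paper's. The paper proves the equivalent multiset statement (Proposition \ref{2primebound}) by induction on the number of distinct exponents, with the de Bruijn--Lam--Leung structure theorem (Lemma \ref{structure-thm}: for $N$ with two prime factors, a nonnegative multiset divisible by $\Phi_N$ is a nonnegative combination of $N$-fibers) as the engine, combined with grid-splitting (Lemma \ref{grid-split}); nonnegativity of the weights is used at every stage, and the hypothesis $q\nmid A(1)$ is propagated through the grid decomposition to select the sub-multisets $A_0$ and $A_{0,j}$ on which the induction continues. You instead reduce modulo $q$: the congruence $\Phi_{p^{\alpha}q^{\beta}}(X)\equiv\Phi_{p^{\alpha}}(X)^{\phi(q^{\beta})}\pmod q$ eliminates the prime $q$ altogether, and the core becomes your folding lemma in $\eff_q[X]$, proved by reducing modulo $X^{p^{\delta_1}}-1$ (folding by the \emph{largest} power, so that all the cyclotomic divisors survive) and exploiting $\Phi_{p^{\delta_1}}(X)=1+X^{p^{\delta_1-1}}+\dots+X^{(p-1)p^{\delta_1-1}}$ together with $\deg w<p^{\delta_1-1}$ to split the support into $p$ disjoint translates; the hypothesis $q\nmid A(1)$ enters only as $\bar A(1)\neq 0$ (and $w(1)\neq 0$ down the induction), exactly the non-degeneracy your $X^{p^{\delta_1}}-1$ example shows is indispensable. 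What each approach buys: yours is more elementary and self-contained (no vanishing-sum structure theorem, no multiset/fiber/cuboid formalism), uses nonnegativity of the coefficients only in the very last line, and in fact proves the stronger conclusion that $A$ has at least $p^{E_p}$ nonzero coefficients rather than merely $A(1)\geq p^{E_p}$; the paper's fiber-and-grid induction, on the other hand, is embedded in the machinery the authors reuse elsewhere (the cuboid arguments and examples of Section \ref{sec-examples}), which is their route toward partial results with more primes. Note that both methods are intrinsically two-prime: the paper's because de Bruijn's nonnegative fiber decomposition fails for three or more prime factors, yours because reduction mod $q$ only collapses the $q$-part of the $m_j$, so a third prime would survive inside the reduced cyclotomic factors.
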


A similar statement holds with $p$ and $q$ interchanged, so that if we assume that $p \nmid A(1)$ instead of $q \nmid A(1)$, we have an analogous lower bound in terms of $q$ and its exponents. If neither of $p,q$ divides $A(1)$, we may choose the maximum of the two lower bounds.

\begin{remark}
The conclusion of Theorem \ref{main theorem} is immediate in the special case when $m_j = p^{\alpha_j}$ are all powers of the same prime $p$ with $\alpha_1,\dots,\alpha_k\geq 1$.
 Indeed, it is easy to deduce by induction from (\ref{polysec-e0}) that if $p$ is prime and $\alpha\in\NN$, then
\begin{equation}\label{prime-power-cyclo}
\Phi_{p^\alpha}(X)=\Phi_p(X^{p^{\alpha-1}}) = 1 + X^{p^{\alpha-1}} + X^{2p^{\alpha-1}} + \dots + X^{(p-1)p^{\alpha-1}},
\ \ \alpha\geq 1.
\end{equation}
Hence 
\begin{equation}\label{prime-power-cyclo-2}
\Phi_{p^{\alpha_j}}(1)=p,
\end{equation} 
and $\prod_{j=1}^k\Phi_{p^{\alpha_j}}(X)|A(X)$ implies that 
$p^k=\prod_{j=1}^k\Phi_{p^{\alpha_j}}(1)|A(1)$. In particular, $p^k\leq A(1)$, as claimed. 

The same argument does not apply to cyclotomic factors of the form $\Phi_{ p^{\alpha_j} q^{\beta_j}}$ with $\alpha_j,\beta_j\geq 1$, or, more generally, to $\Phi_s$ with $s$ composite. In fact, (\ref{polysec-e0}) and (\ref{prime-power-cyclo-2}) imply that for all $N>1$,
$$
N=\prod_{s|N,s\neq 1}\Phi_s(1)
= N \prod_{s|N,\ s\text{ is not prime}}\Phi_s(1),
$$
so that $\Phi_s(1)=1$ for all $s>1$ that are not prime. This is why, in examples such as (\ref{rescaled-fiber}), we can get $A(X)$ to have an arbitrarily large number of cyclotomic factors $\Phi_s$ with composite $s$ while keeping $A(1)$ constant.
\end{remark}

Theorem \ref{main theorem} is motivated in part by its application to the Favard length problem for rational product Cantor sets.  We give a brief introduction to this problem now, and state the relevant previous results, before presenting the extension derived from our Theorem \ref{main theorem}.

Let $A,B\subset\NN$ be finite sets with $\min(|A|,|B|)\geq 2$, and let $L:=|A||B|$. For $n\in\NN$, define the sets $A_n$ and $B_n$ recursively via the formulas $A_1:=L^{-1}A$, $A_{n+1}:=A_n+L^{-n-1}A$, and similarly for $B_n$. 
Let
\begin{equation}\label{def-S}
\calS_n=A_n\times B_n+\{x\in\RR^2 :|x|<L^{-n}\}, \ \ n\in\NN.
\end{equation}
We may think of $\calS_n$  as the $n$-th iteration of a self-similar set $\calS_\infty\subset\RR^2$, defined as follows. 
Let $\{z_j\}_{j=1}^L =A\times B\subset\RR^2$; since $|A|, |B| \geq 2$, these points are distinct and non-collinear.
Let $T_1,\dots,T_L:\C\to\C$ be the similarity maps $T_j(z)=\frac1{L}z+z_j$. We then define $\calS_\infty$ to be the unique compact set such that $\calS_\infty=\bigcup_{j=1}^L T_j(\calS_\infty)$. 
It is well known that such a set exists, has Hausdorff dimension at most 1 (equality follows if the \textbf{open set condition} holds, see  \cite{mattila}) and finite 1-dimensional Hausdorff measure.

For $\theta\in[0,\pi]$, let $proj_\theta:\RR^2\to\RR$ be the linear projection defined by $proj_\theta(x)=x_1\cos\theta+ x_2\sin\theta$ for $x=(x_1,x_2)\in\RR^2$.
Since the $z_j$ are not collinear, $\calS_\infty$ is unrectifiable, and it follows from a theorem of Besicovitch that $ |proj_\theta(\calS_\infty)|=0$ for almost every $\theta \in [0, \pi]$ (see \cite{mattila}).  In particular, if we define the  \textbf{Favard length} of a planar set $\calS$ as the average (with respect to angle) length of its projections,
$$
Fav(\calS):=  \frac{1}{\pi} \int_0^{\pi} |proj_{\theta} (\calS) | ,
$$
then
\begin{equation}\label{fav-limit}
\lim_{n\to\infty}Fav(\calS'_n)=Fav(\calS_\infty)=0,
\end{equation}
where $\calS'_n$ is the $L^{-n}$-neighborhood of $\calS_\infty$.  
While $\calS'_n$ is not necessarily equal to the finite scale set $\calS_n$ defined above, it can be covered by a bounded (independently of $n$) number of copies of $\calS_n$, and vice versa. Therefore any quantitative estimates on $Fav(\calS_n)$ and $Fav(\calS'_n)$ are equivalent up to constants.

Although it is immediate that $Fav (\calS_n) \rightarrow 0$ as $n \rightarrow \infty$, a more subtle question concerns estimating the rate of decay, both from above and from below. There has been significant interest in this issue, with Favard length estimates proved for various types of fractal sets in \cite{BV}, \cite{BThesis}, \cite{BLV}, \cite{BV1}, \cite{BV3}, \cite{Bongers}, \cite{CDOV}, \cite{LZ}, \cite{NPV}, \cite{Tao}, \cite{VV}, \cite{Wilson}, \cite{Zhang}. Motivation and an overview is provided in the review articles \cite{L1}, \cite{PS1}, and applications to analytic capacity and Vitushkin's conjecture are discussed in \cite{DV}, \cite{To}; see also  \cite{BV2}, \cite{BT}, \cite{CDT} for results of this type for curvilinear projections.

The following theorem, proved by Bond, Volberg, and the first author in \cite{BLV}, gives an asymptotic estimate on $Fav(\calS_n)$ when $A$ and $B$ have small cardinality.

\begin{theorem}\label{BLV-thm}
Define $\calS_n$ as in (\ref{def-S}). If $|A|,|B|\leq 6$, then $Fav(\calS_n)\lesssim n^{-\epsilon/\log\log n}$ for some $\epsilon>0$.
\end{theorem}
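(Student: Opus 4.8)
The plan is to follow the Fourier-analytic strategy of Nazarov--Peres--Volberg \cite{NPV} and Bond--Volberg \cite{BV}, \cite{BV1}: reduce the Favard length estimate to a quantitative non-flatness statement for the projected self-similar measures, and then feed in the structure of vanishing sums of roots of unity (Theorem \ref{LL-thm}) to control the exceptional set of directions.

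First I would pass to the Fourier side. Let $\mu$ be the natural self-similar probability measure on $\calS_\infty$ and $\mu_n$ its $n$-th approximation, so that, up to bounded overlap, $\calS_n$ is the $L^{-n}$-neighborhood of $\supp \mu_n$ and $Fav(\calS_n)\approx\int_0^\pi|proj_\theta(\supp\mu_n)|\,d\theta$. Since $A\times B$ is a product set, $\mu=\mu_A\times\mu_B$, and for each $\theta$ the pushforward $(proj_\theta)_*\mu_n$ is a finite-stage self-similar measure on $\RR$ whose Fourier transform is a Riesz-type product
$$
\widehat{(proj_\theta)_*\mu_n}(\xi)=\prod_{j=1}^{n}M_A\!\big(L^{-j}\xi\cos\theta\big)\,M_B\!\big(L^{-j}\xi\sin\theta\big),\qquad M_A(u):=\frac1{|A|}\sum_{a\in A}e^{2\pi i a u},
$$
and similarly for $M_B$. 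The Nazarov--Peres--Volberg duality argument reduces an estimate of the form $Fav(\calS_n)\lesssim\delta_n$ to showing that, for all directions $\theta$ outside a set of measure $\lesssim\delta_n$, these partial Riesz products decay substantially (in a suitable $L^2$-averaged sense over the range $|\xi|\lesssim L^n$), so that $(proj_\theta)_*\mu_n$ is concentrated and $proj_\theta(\calS_n)$ is short.

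Next comes the single-scale mechanism, which is where the arithmetic enters. A direction $\theta$ can contribute no decay at scale $k$ only if $M_A$ and $M_B$ are both close to their maximal modulus $1$ throughout the relevant range $L^{-k}\xi\sim 1$, i.e.\ only if $L^{-k}\xi\cos\theta$ lies near a zero of $M_A$ and $L^{-k}\xi\sin\theta$ near a zero of $M_B$. Writing $A(X)=\sum_{a\in A}X^a$, the rational zeros of $M_A$ in $[0,1)$ are exactly the $d/s$ with $(d,s)=1$ and $\Phi_s(X)\mid A(X)$, so the flatness set of $M_A$ is a controlled neighborhood of a finite set of rationals organized by the moduli $s$ of the cyclotomic factors of $A(X)$. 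Because $|A|=A(1)\le 6$ (and likewise for $B$), Theorem \ref{LL-thm} restricts which moduli $s$ can occur and, crucially, the multiplicities with which a prime can appear among them --- for instance $\Phi_{p^\alpha}\Phi_{p^\beta}\mid A$ with $1\le\alpha<\beta$ would force $p^2\mid A(1)$, hence $p=2$, leaving no room for further prime-power cyclotomic factors. This is exactly the point at which the hypothesis $|A|,|B|\le 6$ is used; it is also the step whose failure for larger digit sets --- the phenomenon exhibited in \eqref{rescaled-fiber} --- motivates the improved bound of Theorem \ref{main theorem}, with its count $E_p$ of distinct exponents. The upshot is that at each scale the bad directions lie in a bounded union of short arcs around an explicitly describable finite set of special slopes, so that outside a proportionally small set of directions one extracts a fixed amount of decay.

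Finally I would combine the single-scale estimates over the $n$ scales. The subtlety is that ``no decay at scale $k$'' events at nearby scales are strongly correlated: the same special slopes recur at every scale, and a cyclotomic factor of $A$ of the form $\Phi_{p^\alpha q^\beta}$ produces flatness of $M_A$ coherent over $\sim\alpha+\beta$ consecutive scales, again the phenomenon behind \eqref{rescaled-fiber}. One therefore works with blocks of scales longer than this coherence length, establishes a decay estimate for each block outside a small exceptional set of directions, and combines the block estimates through a self-improving recursion, bootstrapping up from the trivial logarithmic decay of $Fav(\calS_n)$ that already follows from the self-similar identity $\calS_\infty=\bigcup_jT_j(\calS_\infty)$. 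Balancing the block length ($\sim\log\log n$, forced by the decorrelation requirement) against the $\sim\log n$ available recursion levels yields the stated bound $Fav(\calS_n)\lesssim n^{-\epsilon/\log\log n}$. The main obstacle is this quantitative heart of the matter: producing a decay estimate for the Riesz products uniform over all directions outside a genuinely small exceptional set requires not merely that the mask polynomials have few cyclotomic factors but sharp control of how flat they can be near those factors and of how flatness at distinct scales can reinforce --- which is precisely the arithmetic input supplied by Theorem \ref{LL-thm}, and, for the larger digit sets, by Theorem \ref{main theorem}.
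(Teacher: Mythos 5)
Your proposal reproduces, at a high level, the Fourier-analytic framework of \cite{NPV} and \cite{BLV}, but it misses the mechanism that actually makes the case $|A|,|B|\leq 6$ work, and it is not the route the paper takes. In the paper, Theorem \ref{BLV-thm} is not reproved from scratch: it is quoted from \cite{BLV} and, within the present framework, follows from Proposition \ref{single prime} (equivalently, from the SLV construction plus Lemma \ref{small-lemma} in Section \ref{small-card}). The whole difficulty is concentrated in the ``bad'' factor $A''=A^{(2)}$, i.e.\ the cyclotomic divisors $\Phi_s$ with $(s,L)=1$. For these one has $\Phi_s(1)=1$, so divisibility gives no information about $|A|$ by the naive product argument, and the Lam--Leung bound enters only through the classification of short vanishing sums. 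Your illustrative computation ($\Phi_{p^\alpha}\Phi_{p^\beta}\mid A$ forces $p^2\mid A(1)$) concerns exactly the harmless factors: $\Phi_{p^\alpha}\mid A$ forces $p\mid |A|$, hence $(p^\alpha,L)\neq 1$, so these sit in $A^{(1)}$ and are handled by the \cite{NPV}/\cite{LZ} machinery. What must actually be verified for $|A|\le 6$ is a statement about $S_A=\{s:\Phi_s\mid A,\ (s,L)=1\}$ --- e.g.\ that one can factor $s_A=s_{1,A}s_{2,A}$ with $s_{2,A}<|A|$ and no $\Phi_s\mid A$ for $s\mid s_{1,A}$ --- using the fact (Remark \ref{remark1}, Lemma \ref{structure-thm}, and the Poonen--Rubinstein classification) that each $s\in S_A$ has at least two distinct prime factors coprime to $|A|$, which for $|A|\le 6$ leaves essentially only $|A|=5$ with $A$ a union of a $2$-fiber and a $3$-fiber mod $s$. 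This arithmetic verification, followed by the construction of a Set of Large Values $\Gamma$ in the frequency variable $\xi$ with $|\Gamma\cap[0,1]|>1/|A|$ per scale and $\prod_k|\phi''(L^k\xi)|$ bounded below on $\Gamma-\Gamma$, is the heart of the proof; your ``bad directions lie in short arcs'' narrative does not supply it, since the estimates in \cite{BLV} are organized around the frequency variable for a pigeonholed direction $t$, not around an exceptional set of directions.

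A second, smaller error: you attribute the $\log\log n$ loss to the coherence of cyclotomic flatness across scales. In fact that loss comes entirely from the factor $A^{(3)}$ (irreducible factors with roots $e^{2\pi i\xi_0}$, $\xi_0$ irrational and possibly Liouville), whose set-of-small-values bounds degrade with the scale; when all unit-circle roots are roots of unity one gets the clean power bound $n^{-\epsilon}$ of Theorem \ref{thm-noliouville}. The cyclotomic factors themselves, once the SLV set is built, cost only the fixed factor $L^{-(1-\epsilon)m}$ in \eqref{BLV-size-gamma} and do not produce the $\log\log n$.
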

Here and below, we use the $\lesssim$ notation as follows. Let $G(n)$ be a function defined for all sufficiently large $n\in\NN$, with values in $(0,\infty)$. Then $Fav(\calS_n) \lesssim G(n)$ means that there exists a constant $C > 0$, independent of $n$, such that $Fav (\calS_n) \leq C G(n)$ for all $n$ large enough.

The proof in \cite{BLV} relies upon the cyclotomic divisibility properties of certain polynomials associated to $A$ and $B$.  Specifically, define
\begin{equation}\label{generating}
A(X)=\sum_{a\in A} X^a,
\end{equation}
and similarly for $B$.  We are assuming that $A,B\subset\NN$, so that $A(X)$ and $B(X)$ are polynomials in $\zz[X]$. For our present purposes, it will suffice to consider $A$ and $B$ separately; and so, we present these requirements for the polynomial $A(X)$.  Recalling that $L = |A||B|$, we define the following factors of $A(X)$.

\begin{definition}\label{A1234}
We have $A(X)=\prod_{i=1}^4 A^{(i)}(X)$, where each $A^{(i)}(X)$ is a product of  irreducible factors of $A(X)$ in $\zz[X]$, chosen as follows (by convention, an empty product is identically equal to 1):
\begin{itemize}
\item $A^{(1)}(X)=\prod_{s\in S_A^{(1)}}\Phi_s(X)$, $S_A^{(1)}=\{s\in\nn:\ \Phi_s(X)|A(X), (s,L)\neq 1\}$,
\item $A^{(2)}(X)=\prod_{s\in S_A^{(2)}}\Phi_s(X)$, $S_A^{(2)}=\{s\in\nn:\ \Phi_s(X)|A(X),$ $(s,L)=1\}$,
\item $A^{(3)}(X)$ is the product of those irreducible factors of $A(X)$ that have at least one root of the form $e^{2\pi i\xi_0}$, $\xi_0\in\rr\setminus\qq$,
\item $A^{(4)}(X)$ has no roots on the unit circle.
\end{itemize}
We then define the \textbf{good} and \textbf{bad} factors $A'$ and $A''$ of $A$:
\begin{equation}\label{a'a''}
A'(X):=A^{(1)}(X)A^{(3)}(X)A^{(4)}(X),\ \ A''(X):=A^{(2)}(X).
\end{equation}
\end{definition}

Theorem \ref{BLV-thm} is then a consequence of the following more general result. We retain the notation of \cite{BLV}  for the statement of the following theorem. Afterword, we simplify this notation to better state our main theorem.

\begin{proposition}\label{single prime} \cite{BLV}
Let $A,B$ be as in Theorem \ref{BLV-thm}, but without the assumption that $\max(|A|,|B|)\leq 6$.  As before, define
\begin{equation}\label{def-SA}
S_A^{(2)}:= \{s:\Phi_s|A\text{ and }(s,L)=1\}= \{s:\Phi_s(X)|A''(X)\},\ \ 
s_A:=\text{lcm}(S_A^{(2)}).
\end{equation}
Suppose we can factor $s_A=s_{1,A}s_{2,A}$, with $s_{1,A},s_{2,A}\in\NN$ such that:
\begin{itemize}
\item  $s_{2,A}<|A|$,
\item $\Phi_s (X)$ does not divide $A(X)$ for any $s|s_{1,A}$.
\end{itemize}
Assume also that the same holds for $B$. Then $Fav(\calS_n)\lesssim n^{-\epsilon/\log\log n}$ for some $\epsilon>0$. 
\end{proposition}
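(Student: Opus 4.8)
The plan is to situate Proposition~\ref{single prime} within the harmonic-analytic framework for Favard length decay developed by Nazarov--Peres--Volberg \cite{NPV} and Bond--Volberg \cite{BV}, and to check that the stated cyclotomic hypotheses provide exactly the arithmetic input this framework requires. Write $\phi_A(t)=\frac{1}{|A|}A(e^{2\pi i t})$ and $\phi_B(t)=\frac{1}{|B|}B(e^{2\pi i t})$, and let $\mu_n$ be the natural self-similar probability measure on $\calS_n$. Since $\calS_n$ is built from the digit sets $A$ and $B$ with contraction ratio $L^{-1}$, the Fourier transform of $proj_\theta\mu_n$ factors over scales:
\[
\widehat{proj_\theta\mu_n}(\xi)=\prod_{j=1}^{n}\phi_A\!\big(L^{-j}\xi\cos\theta\big)\,\phi_B\!\big(L^{-j}\xi\sin\theta\big).
\]
The first step is the standard reduction of \cite{NPV}, in the quantitative form of \cite{BV} and \cite{BLV}: decay of $Fav(\calS_n)$ at a given rate follows once one shows that, outside a small exceptional set of pairs $(\theta,\xi)$ with $|\xi|$ in the natural dyadic range $\sim L^n$, at least a fixed positive fraction of the scales $j\in\{1,\dots,n\}$ is ``productive'', meaning that $L^{-j}\xi\cos\theta$ stays a fixed distance from every zero of $t\mapsto A(e^{2\pi i t})$, or $L^{-j}\xi\sin\theta$ stays a fixed distance from every zero of $t\mapsto B(e^{2\pi i t})$; then the product above is exponentially small in the number of productive scales.

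Second, I would carry out the arithmetic analysis underpinning this sub-level-set statement. The relevant zeros of $t\mapsto A(e^{2\pi i t})$ are the $t\equiv d/m\pmod 1$ with $(d,m)=1$ and $\Phi_m\mid A$, i.e.\ $m\in S_A^{(1)}\cup S_A^{(2)}$; the good factor $A'=A^{(1)}A^{(3)}A^{(4)}$ (the off-circle, irrational, and $(m,L)\neq 1$ parts) is exactly the part amenable to the decay machinery of \cite{BLV}, so the delicate point is the bad factor $A''=A^{(2)}$ attached to $S_A^{(2)}$. A zero with denominator $m\in S_A^{(1)}$ cooperates with the self-similar dilation, since $t\mapsto Lt$ pushes such a residue onto coarser grids and eventually onto the integers; a zero with denominator $m\in S_A^{(2)}$ (so $(m,L)=1$) is rigid, since $t\mapsto Lt$ merely permutes the coprime residues mod $m$, precisely as in the example \eqref{rescaled-fiber}, and this is what must be controlled. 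The hypotheses $s_A=s_{1,A}s_{2,A}$, $\Phi_s\nmid A$ for all $s\mid s_{1,A}$, and $s_{2,A}<|A|$ accomplish this: every bad modulus $m\in S_A^{(2)}$ divides $s_A$ but not $s_{1,A}$, so the constraint it imposes lives modulo $s_{2,A}$, and since the coefficient sum $A(1)=|A|$ strictly exceeds $s_{2,A}$, a pigeonhole argument shows that the rigid zeros cannot block more than a fixed fraction of the scales, uniformly in $n$. Carrying out the same analysis for $B$ and combining through the ``or'' in the definition of a productive scale completes the sub-level-set estimate.

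Finally, I would feed this estimate into the self-improving iteration of \cite{NPV}, \cite{BV}, \cite{BLV}, which converts it into the bound $Fav(\calS_n)\lesssim n^{-\epsilon/\log\log n}$; the $\log\log n$ loss is the combined cost of the bootstrap and of the strength of the small-value estimate available at this level of generality, as in \cite{BLV}. The main obstacle, and the technical heart of the proof, is precisely this quantitative small-value estimate for products of $\phi_A$ and $\phi_B$: one must show that the set of $(\theta,\xi)$ for which fewer than a fixed fraction of the $n$ scales are productive has exponentially small measure, with all implied constants uniform in $n$. This requires controlling how resonances at different scales interact --- concretely, tracking the orbit of a frequency under repeated multiplication by $L$ modulo $s_A$ and modulo $s_B$, and bounding how often such an orbit can meet the rigid residue classes --- and it is exactly here that the hypotheses $s_{2,A}<|A|$, $s_{2,B}<|B|$ and the cyclotomic-free moduli $s_{1,A}$, $s_{1,B}$ are indispensable.
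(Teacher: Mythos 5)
Your first and third steps---the reduction of Favard decay to a quantitative small-value estimate for $\prod_j\phi_A(L^{-j}\xi\cos\theta)\phi_B(L^{-j}\xi\sin\theta)$, and the bootstrap that converts such an estimate into the $n^{-\epsilon/\log\log n}$ bound---are indeed the black-box part of \cite{BLV} that this proposition inherits, and you identify correctly that the only place where the hypotheses on $s_{1,A},s_{2,A}$ enter is the treatment of the bad factor $A''$. But the second step, which is the actual content of the proposition, is a genuine gap: the sentence ``the constraint it imposes lives modulo $s_{2,A}$, and since $A(1)=|A|$ strictly exceeds $s_{2,A}$, a pigeonhole argument shows that the rigid zeros cannot block more than a fixed fraction of the scales'' is not an argument, and it misattributes the role of both hypotheses. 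The mechanism is not that rigid zeros block few scales; for the bad factor one needs \emph{all} $m\approx\log n$ scales to be simultaneously controlled on a single set of measure at least $L^{-(1-\epsilon)m}$, i.e.\ a Set of Large Values in the sense of \eqref{BLV-diff-gamma}--\eqref{BLV-size-gamma}. What must actually be constructed is the explicit set
$$
\Gamma=\Big\{\xi:\ \dist\big(\xi,\tfrac{1}{s_{1,A}}\ZZ\big)<\tfrac{\rho}{2}\Big\},\qquad \rho\nearrow (s_{1,A}T)^{-1},
$$
exactly as in Section \ref{oneclustergamma} with the single cluster $\calC=S_A^{(2)}$ and $Q=s_{1,A}$. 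The second hypothesis ($\Phi_s\nmid A$ for all $s\mid s_{1,A}$) is what forces $s_j\nmid Q$ for every $s_j\in S_A^{(2)}$, which by the argument of Lemma \ref{q-lemma1} makes $\Gamma-\Gamma$ stay a positive distance from every zero $b/s_j$ of $A''(e^{2\pi i\xi})$. The first hypothesis ($s_{2,A}<|A|$) is a \emph{density} condition, not a counting-of-scales condition: since each $t_j=s_j/(s_j,s_{1,A})$ divides $s_{2,A}$, one gets $T\leq s_{2,A}$, hence $|\Gamma\cap[0,1]|$ can be made larger than $1/s_{2,A}>1/|A|$; only this strict excess over $1/|A|$ (and the analogous excess over $1/|B|$) lets the randomly translated intersection over $m$ scales retain measure $\geq L^{-(1-\epsilon)m}$ with $\epsilon>0$, rather than the useless $L^{-m}$. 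Without constructing $\Gamma$ and verifying these two properties, the hypotheses are never actually used and the proof does not close.

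A secondary imprecision: your ``fixed positive fraction of productive scales'' framing is the right picture for the good factor $A'$ (and for the four-corner-set arguments of \cite{NPV}, \cite{BV}), but for $A''$ the zeros at $b/s$ with $(s,L)=1$ are invariant under $\xi\mapsto L\xi$ modulo translation, so no positive fraction of scales is automatically productive; this is precisely why \cite{BLV} replaces the orbit-counting argument with the SLV construction above. You should make that distinction explicit rather than combining the two cases ``through the `or'\thinspace''.
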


It is verified in \cite{BLV} that the assumptions of Proposition \ref{single prime} must hold if $\max(|A|,|B|)\leq 6$, hence Theorem \ref{BLV-thm} follows. The same proof also yields a stronger result in the case when $A^{(3)}=B^{(3)}=1$. 

\begin{theorem}\label{thm-noliouville} \cite{BLV}
Let $A,B$ be as in Theorem \ref{BLV-thm} or Proposition \ref{single prime}. Assume that all roots of $A(X)$ and $B(X)$ on the unit circle are roots of unity. Then
$Fav(\calS_n)\lesssim n^{-\epsilon}$ for some $\epsilon>0$.
\end{theorem}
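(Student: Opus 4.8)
As Theorem~\ref{thm-noliouville} is due to \cite{BLV}, the proof I would give is theirs; I sketch its structure, highlighting the one place where the hypothesis $A^{(3)}=B^{(3)}=1$ upgrades the exponent. The plan is to run the $L^2$-based multiscale scheme of Nazarov--Peres--Volberg. One places the natural probability measure $\mu_n$ on $\calS_n$ (equidistributed over the $L^n$ squares), pushes it forward under $proj_{\theta}$, and, via Cauchy--Schwarz in the form $\|g\|_1^2\le|\supp g|\cdot\|g\|_2^2$, bounds $|proj_{\theta}(\calS_n)|$ from below by the reciprocal of the high-frequency $L^2$-mass of the projected density $g$. Since $\calS_n$ agrees, up to an $L^{-n}$-neighborhood, with the product $A_n\times B_n$, that $L^2$-mass is controlled by the trigonometric product
$$
\prod_{k=1}^{n}\frac{A\bigl(e^{2\pi i\,tL^{-k}\cos\theta}\bigr)}{|A|}\cdot\frac{B\bigl(e^{2\pi i\,tL^{-k}\sin\theta}\bigr)}{|B|},
$$
whose $k$-th factor is close to its maximum $1$ precisely when the $k$-th iterate, under $x\mapsto Lx\bmod 1$, of the base-$L$ digit string of $t\cos\theta$ (resp.\ $t\sin\theta$) stays away from the zeros of $A$ (resp.\ $B$) on the unit circle. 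Feeding such information into the combinatorial iteration of \cite{BLV}, the whole estimate comes down to bounding, for each arithmetic type of circle-zero of $A$ and $B$, the size of the set of frequencies whose $L$-adic orbit meets the corresponding bad set too often; the quality of each of these bounds --- a genuine power saving versus a weaker one --- is what decides between the exponents $-\epsilon$ and $-\epsilon/\log\log n$.

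The circle-zeros of $A$ are organized by the factorization $A(X)=A^{(1)}(X)A^{(2)}(X)A^{(3)}(X)A^{(4)}(X)$ of Definition~\ref{A1234} (and similarly for $B$). The off-circle factor $A^{(4)}$ has modulus bounded above and below on the circle and hence contributes only fixed multiplicative constants. The factor $A^{(1)}$ is a product of $\Phi_s$ with $(s,L)\neq 1$; heuristically a prime dividing $\gcd(s,L)$ is absorbed by the dilation $x\mapsto Lx$, so after boundedly many scales the orbit is carried off the relevant roots and these zeros likewise cost nothing beyond constants. The delicate factor is $A^{(2)}=\prod_{s\in S_A^{(2)}}\Phi_s$ with $(s,L)=1$: here $L$ is invertible modulo each such $s$, the map $x\mapsto Lx$ only permutes the $s$-th roots of unity, and the orbit can stay on the zero set of $A$ for many scales once it lands on the lattice $\tfrac1{s_A}\ZZ$. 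This is exactly where the hypothesis of Proposition~\ref{single prime} enters: assuming that $s_A=\text{lcm}(S_A^{(2)})$ factors as $s_{1,A}s_{2,A}$ with $s_{2,A}<|A|$ and $\Phi_s\nmid A$ for every $s\mid s_{1,A}$ forces this union of rational neighborhoods to be thin enough that, for a definite density of frequencies, the number of bad scales it produces is small. Carrying this out for $A$ and for $B$ and inserting the resulting bounds into the iteration already gives $Fav(\calS_n)\lesssim n^{-\epsilon/\log\log n}$.

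The only contribution left is $A^{(3)}$ (and $B^{(3)}$): an irreducible non-cyclotomic factor of $A$ possessing a root $e^{2\pi i\xi_0}$ with $\xi_0\in\RR\setminus\QQ$. Now the orbit of $t\cos\theta$ can approach $\xi_0$ at many scales, and to bound how often this occurs one needs a lower bound on how poorly $\xi_0$ is approximable by $L$-adic rationals $a/L^m$. Since $e^{2\pi i\xi_0}$ is an algebraic number on the unit circle that is not a root of unity, Baker's theorem on linear forms in logarithms supplies such a bound, but only of the form $|\xi_0-aL^{-m}|\gtrsim L^{-Cm}$ for some constant $C$: polynomial, not exponential, in the denominator. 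It is precisely this merely-polynomial Diophantine input that, once fed into the multiscale iteration, costs a factor $\log\log n$ in the exponent and yields the weaker bound of Theorem~\ref{BLV-thm}; no other step degrades the power law. Under the hypothesis of Theorem~\ref{thm-noliouville}, however, $A^{(3)}=B^{(3)}=1$: every circle-zero of $A$ and of $B$ is a root of unity, already handled by the treatment of $A^{(1)}$ and $A^{(2)}$, so the Baker step never arises and the iteration returns $Fav(\calS_n)\lesssim n^{-\epsilon}$.

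The main obstacle is therefore bookkeeping rather than ideas: one must reproduce the \cite{BLV} multiscale estimate in enough detail to verify (i) that $A^{(3)}$ and $B^{(3)}$ enter the argument only through the Diophantine step above, and (ii) that deleting that step reintroduces no hidden $\log\log$ elsewhere --- neither in the iteration itself nor in the comparison between $\calS_n$ and its $L^{-n}$-neighborhood $\calS'_n$. With that checked, tracking the dependence of $\epsilon$ on $L$, $|A|$, $|B|$ and on $s_{2,A},s_{2,B}$ is routine.
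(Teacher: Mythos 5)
The paper does not prove Theorem \ref{thm-noliouville}; it is quoted verbatim from \cite{BLV}, so there is no internal proof to compare against. Your sketch accurately reflects the structure of the cited argument --- the Nazarov--Peres--Volberg $L^2$ multiscale scheme, the four-factor decomposition of Definition \ref{A1234}, and the fact that the $\log\log n$ loss in Theorem \ref{BLV-thm} originates solely in the Diophantine (Baker-type) treatment of the non-cyclotomic circle zeros in $A^{(3)}, B^{(3)}$, so that their absence restores the pure power law --- and this is consistent with the surrounding discussion in the present paper and with \cite{BLV} itself.
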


For sets $A,B$ not satisfying the assumptions of Proposition \ref{single prime},  the only remaining barrier in for proving upper estimates on $\Fav(\calS_n)$ is an analysis of the bad factor $A^{''}$ associated to the set $S_A^{(2)}$.  As such, we restrict our attention to this collection of ``bad'' cyclotomic factors.  This allows us to simplify notation and write $S_A : = S_A^{(2)}$. 

Combining the framework of \cite{BLV} with our Theorem \ref{main theorem} leads to the following result.

\begin{theorem}\label{Favard-two-primes}
Define $\calS_n$ as in (\ref{def-S}).
Let $A(X) = \sum_{a \in A} X^a$, and similarly for $B(X)$.  Define $S_A : = S_A^{(2)}$ and $s_A$ as in (\ref{def-SA}), and similarly for $B$. Assume that $s_A$ and $s_B$ have at most two prime divisors each (not necessarily the same ones). Then
$$
Fav (\calS_n) \lesssim n^{- \epsilon / \log \log n}
$$
for some $\epsilon>0$. If, in addition, all roots of $A(X)$ and $B(X)$ on the unit circle are roots of unity,  we have the improvement $Fav(\calS_n) \lesssim n^{- \epsilon}$ for some $\epsilon > 0$.
\end{theorem}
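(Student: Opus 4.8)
\emph{Plan.} The plan is to run the machinery of \cite{BLV}. There, the Favard–length decay rate for $\calS_n$ is reduced to a structural statement about the bad cyclotomic factors $A''=A^{(2)}$ and $B''=B^{(2)}$: it suffices to control the size of the set of projection angles for which the relevant exponential sum at scale $L^{-n}$ exhibits no cancellation, and to bound the loss incurred at each step of an iteration. In \cite{BLV} this is carried out under the one–prime–type hypothesis of Proposition \ref{single prime}, and our task is to push the same analysis through when $s_A$ (equivalently $s_B$) carries two prime factors. The only new external input beyond \cite{BLV} is Theorem \ref{main theorem}.

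First I would dispose of the degenerate case: if $S_A=\emptyset$ then $s_A=1$ and the hypothesis of Proposition \ref{single prime} holds trivially (take $s_{1,A}=s_{2,A}=1<|A|$). Otherwise, observe that every $s\in S_A$ satisfies $(s,L)=1$, so no prime dividing $s$ divides $L=|A||B|$, and in particular no such prime divides $|A|$; since $\Phi_{p^\alpha}(1)=p$ for a prime power (and $\Phi_1(1)=0$), this rules out $s$ being a prime power. Hence $s_A$ is divisible by exactly two primes $p,q$, say $s_A=p^{a}q^{b}$ with $a,b\ge 1$, every $s\in S_A$ is divisible by $pq$, and $p\nmid|A|$, $q\nmid|A|$. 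Encode $S_A$ by its set of exponent pairs $\{(\alpha,\beta): p^{\alpha}q^{\beta}\in S_A\}\subset[1,a]\times[1,b]$, whose coordinatewise maximum is $(a,b)$; let $\mathcal E_p,\mathcal E_q$ denote the two coordinate projections and $E_p=|\mathcal E_p|$, $E_q=|\mathcal E_q|$. Now apply Theorem \ref{main theorem} to $A$: its cyclotomic factors of the form $\Phi_{p^{\alpha}q^{\beta}}$ are precisely those with exponents in this set (any such divisor automatically has $(p^{\alpha}q^{\beta},L)=1$), and $q\nmid A(1)$, so $|A|\ge p^{E_p}$; symmetrically $|A|\ge q^{E_q}$. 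Since $p\mid p^{E_p}$, $q\mid q^{E_q}$ while $p,q\nmid|A|$, both inequalities are strict: $|A|>p^{E_p}$ and $|A|>q^{E_q}$.

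In the special case where one of $\mathcal E_p,\mathcal E_q$ is a full interval — say $\mathcal E_p=\{\gamma,\dots,a\}$, and similarly for $B$ — Proposition \ref{single prime} applies directly: no element of $S_A$ has $p$–exponent below $\gamma$, so with $s_{1,A}=p^{\gamma-1}q^{b}$ we have $\Phi_s\nmid A$ for every $s\mid s_{1,A}$, and $s_{2,A}=p^{a-\gamma+1}=p^{E_p}<|A|$. In general, however, $\mathcal E_p$ and $\mathcal E_q$ both have internal gaps, a factorization $s_A=s_{1,A}s_{2,A}$ of the required kind with $s_{2,A}<|A|$ may fail to exist, and Proposition \ref{single prime} cannot be invoked as a black box. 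One must instead re–run the estimates of \cite{BLV} with the two–dimensional object $S_A$ in place of an interval. The heuristic is that the ``bad'' projection directions at each scale now split into directions that are $p$–adically bad along one of $E_p$ residue classes or $q$–adically bad along one of $E_q$ residue classes, and that Theorem \ref{main theorem}, in the sharpened form $|A|>\max(p^{E_p},q^{E_q})$, is exactly the lower bound on the number of available digits $|A|$ that makes the \cite{BLV} iteration gain at each step, so that the same bookkeeping yields $Fav(\calS_n)\lesssim n^{-\epsilon/\log\log n}$.

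The main obstacle I anticipate is precisely this two–dimensional adaptation of the combinatorial core of \cite{BLV}: one must re–prove the relevant structured–set/sparsity lemmas with $S_A$ a subset of a product $[1,a]\times[1,b]$ rather than a single interval, with constants uniform in the data, and verify that the only place the largeness of $|A|$ is used is through $\max(p^{E_p},q^{E_q})$, so that Theorem \ref{main theorem} indeed suffices. Once this is in place, the improvement to $Fav(\calS_n)\lesssim n^{-\epsilon}$ under the hypothesis that all unit–circle roots of $A$ and $B$ are roots of unity follows exactly as Theorem \ref{thm-noliouville} is deduced from Proposition \ref{single prime} in \cite{BLV}: the hypothesis gives $A^{(3)}=B^{(3)}=1$, there is no Liouville–type obstruction, and the iteration loses only a bounded factor per step instead of a $\log\log$ factor.
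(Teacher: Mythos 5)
Your number-theoretic setup is right and matches the paper: every $s\in S_A$ is divisible by both primes $p,q$ of $s_A$ (no prime powers can occur in $S_A$), Theorem \ref{main theorem} gives $|A|\ge p^{E_p}$ and $|A|\ge q^{E_q}$, and both inequalities are strict because $p,q\nmid|A|$. You also correctly recognize that Proposition \ref{single prime} cannot be invoked as a black box when the exponent set has gaps (the paper makes the same point, with a counterexample in Section \ref{example1}). But the proposal stops exactly where the new work begins. What the BLV machinery actually needs is a Borel set $\Gamma\subset[0,1]$ satisfying (\ref{BLV-diff-gamma}) and (\ref{BLV-size-gamma}) --- an SLV set whose difference set avoids the zeros of $\phi''_A$ and $\phi''_B$ at every scale $L^k$, with measure at least $L^{-(1-\epsilon)m}$. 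Your plan to ``re-run the estimates of \cite{BLV} with the two-dimensional object $S_A$ in place of an interval,'' with a heuristic about $p$-adically versus $q$-adically bad directions, is not how this is done and does not by itself produce such a set; you flag this yourself as ``the main obstacle I anticipate,'' which is an admission that the central step is missing.

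The paper's resolution is a concrete single-scale construction (Proposition \ref{lb-general-gamma}), not a rerun of the BLV iteration. Fix one prime $p_i$ dividing every $s_j\in S_A$ and partition $S_A$ into clusters $\mathcal{C}^{i,\alpha}$ according to the exponent $\alpha$ of $p_i$ in $s_j$. For each cluster, setting $Q=p_i^{\alpha-1}\,\mathrm{lcm}(q_1,\dots,q_I)$ (where $s_j=p_i^{\alpha}q_j$), the $\rho$-neighborhood of $\tfrac1Q\ZZ$ is a $1$-periodic set of density arbitrarily close to $1/p_i$ whose difference set stays away from all roots of the $\Phi_{s_j}$ with $s_j$ in that cluster (Lemmas \ref{q-lemma1}--\ref{q-lemma2}); an averaging/pigeonhole argument (Lemma \ref{q-lemma3}) then picks translates of these $E_i$ cluster sets whose intersection has density $>\lambda_i^{E_i}$ for any $\lambda_i<p_i^{-1}$. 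The role of Theorem \ref{main theorem} is precisely to guarantee $\lambda_i^{E_i}$ can be taken larger than $1/|A|$, which is what makes the final multiscale intersection (\ref{favard-our-gamma-entire}) satisfy (\ref{BLV-size-gamma}). Only one of the two primes is used in the construction; there is no splitting of bad directions into $p$-type and $q$-type. Without supplying this cluster construction (or an equivalent), the proposal does not prove the theorem. The deduction of the $n^{-\epsilon}$ improvement from $A^{(3)}=B^{(3)}=1$ is fine and is indeed quoted from \cite{BLV}.
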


\begin{remark}
The assumptions of Theorem \ref{Favard-two-primes} {\it do not} imply that the sets $A,B$ satisfy the assumptions of Proposition \ref{single prime}; see Section \ref{example1} for an example.
\end{remark}

\begin{remark}\label{remark1}
It is not difficult to see that elements of $S_A$ cannot be prime powers. Indeed, by (\ref{prime-power-cyclo}) we have $\Phi_{p^\alpha}(1)=p$ if $p$ is prime and $\alpha\in\NN$. 
Hence $\Phi_{p^\alpha}(X)|A(X)$ implies that $p|A(1)=|A|$, and in particular that $p^\alpha$ cannot be relatively prime to $|A|$. 

It follows that if $S_A$ is nonempty, then $s_A$ must have \textit{at least} two distinct prime factors. Thus Theorem \ref{Favard-two-primes} represents the simplest rational product case that goes beyond \cite{BLV}.
\end{remark}

We are also able to increase the cardinality bound in Theorem \ref{BLV-thm}.

\begin{theorem}\label{favard-small}
Theorem \ref{BLV-thm} and Theorem \ref{thm-noliouville} continue to hold with the assumption that $\max(|A|,|B|)\leq 6$ replaced by $\max(|A|,|B|)\leq 10$.
\end{theorem}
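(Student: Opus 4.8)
The plan is to show that the hypothesis $\max(|A|,|B|)\leq 10$ forces the pair $(A,B)$ to satisfy either the hypotheses of Proposition \ref{single prime} or those of Theorem \ref{Favard-two-primes}, after which the desired conclusion follows immediately from those two results. Since the arguments for $A$ and $B$ are identical, I will work with a single set, say $A$, with $2\leq |A|\leq 10$, and analyze the possible structure of $S_A := S_A^{(2)}$ and of $s_A = \mathrm{lcm}(S_A)$.

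First I would record the basic constraints on elements of $S_A$. If $s\in S_A$ then $\Phi_s(X)\mid A(X)$ and $(s,L)=1$; in particular $(s,|A|)=1$. As noted in Remark \ref{remark1}, no $s\in S_A$ can be a prime power, so each $s\in S_A$ has at least two distinct prime factors, and since $\Phi_s(1)=1$ for composite $s$ there is no direct divisibility obstruction coming from $A(1)=|A|$ alone. However, a counting bound is available: for any $s$, $\deg\Phi_s = \varphi(s)$, so $\sum_{s\in S_A}\varphi(s) \leq \deg A \leq \max(A)$; more usefully, since the $\Phi_s$ for distinct $s$ are coprime and $A$ has $|A|$ terms, the product $\prod_{s\in S_A}\Phi_s(X)$ divides $A(X)$, a polynomial with $|A|\leq 10$ nonzero coefficients all equal to $1$. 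Combined with Theorem \ref{main theorem} (applied to $A(X)$ in the role of the polynomial there, with $A(1)=|A|$) this already restricts things: if all elements of $S_A$ are of the form $p^{\alpha}q^{\beta}$ for two fixed primes $p,q$ and, say, $q\nmid |A|$, then $|A|\geq p^{E_p}$, which with $|A|\leq 10$ forces $p\leq 10$ and $E_p$ small, hence only finitely many possible exponent sets. The main work is then a finite case check: I would enumerate the primes $p<q$ with $pq \leq $ something bounded by the cyclotomic degree constraint ($\varphi(pq)=(p-1)(q-1)\leq \deg A$, and $\deg A$ can be taken bounded after the standard reduction to $A$ having no common factor $X$, i.e.\ $0\in A$, and bounding the needed scale — this is the same bookkeeping as in \cite{BLV}), and for each such $s_A$ exhibit the factorization $s_A = s_{1,A}s_{2,A}$ with $s_{2,A}<|A|$ and no $\Phi_s$, $s\mid s_{1,A}$, dividing $A$ — OR, when no such factorization exists, observe that $s_A$ has at most two prime divisors so that Theorem \ref{Favard-two-primes} applies directly.

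Concretely, the dichotomy I expect is: (i) if $s_A$ has at most two prime factors, Theorem \ref{Favard-two-primes} gives the bound with no further work; (ii) if $s_A$ has three or more distinct prime factors $p_1<p_2<p_3<\cdots$, then because $(s_A,|A|)=1$ and $|A|\leq 10$ the small primes $2,3,5,7$ available are limited, and one shows that some prime power $p_i^{\alpha}\,\|\,s_A$ can be split off into $s_{2,A}$ with $p_i^{\alpha} < |A|$ while the complementary factor $s_{1,A}$ has the property that no $\Phi_s$ with $s\mid s_{1,A}$ divides $A$ — the latter following from the fact that, by construction, the only cyclotomic factors $\Phi_s$ of $A$ with $(s,L)=1$ are exactly those with $s\in S_A$, so it suffices that no element of $S_A$ divides $s_{1,A}$, which can be arranged by the structure of $S_A$ (every element of $S_A$ must then involve all "large" primes of $s_A$ simultaneously, leaving room to isolate a small prime). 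This is exactly the style of argument carried out for $\max\leq 6$ in \cite{BLV}; extending it to $10$ is a matter of checking the additional candidate values of $|A|\in\{7,8,9,10\}$ and the new admissible products of primes coprime to them.

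The main obstacle will be the case analysis in step (ii) — specifically, ruling out or handling configurations where $S_A$ contains several elements whose least common multiple has three distinct primes in a way that resists the splitting $s_A=s_{1,A}s_{2,A}$. The key leverage is that $\Phi_s(1)=1$ for composite $s$ combined with the coprimality $(s,|A|)=1$ is quite restrictive when $|A|\leq 10$: the primes dividing any $s\in S_A$ must avoid the (few) prime factors of $|A|$, and the degree budget $\sum_{s\in S_A}\varphi(s)\leq \deg A$ caps how many and how large these $s$ can be. Once the finitely many surviving configurations are listed, verifying that each satisfies the hypotheses of Proposition \ref{single prime} or Theorem \ref{Favard-two-primes} is routine. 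I would present the enumeration compactly in a table organized by the value of $|A|$, and then invoke the two cited theorems to conclude.
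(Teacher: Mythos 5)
Your overall strategy (reduce to a finite case check and then invoke Proposition \ref{single prime} or Theorem \ref{Favard-two-primes}) diverges from the paper's proof in a way that exposes two genuine gaps. First, the finiteness of your enumeration rests on a ``degree budget'' $\sum_{s\in S_A}\varphi(s)\leq\deg A$ together with a claimed ``standard reduction'' bounding $\deg A$; no such reduction exists. The elements of $A$ are arbitrary natural numbers, so $\deg A$ is unbounded, and the elements of $S_A$ can involve arbitrarily high prime powers even when $|A|\leq 10$ (cf.\ the long-fiber examples in Section \ref{sec-examples}). What actually makes the problem finite in the paper is the classification of minimal vanishing sums of roots of unity due to Poonen--Rubinstein (and its extension in \cite{CDK}): for each $s\in S_A$ the multiset $A$ mod $s$ decomposes into minimal structures ($s$-fibers $R_p$ and irreducibles of type $(R_p:kR_q)$), and the coprimality $(s,|A|)=1$ then pins down the admissible shapes for each $|A|\leq 10$. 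Even then, the number of \emph{distinct exponents} of a given prime across the various $s\in S_A$ must be controlled separately, via the grid-splitting arguments (Lemma \ref{grid-split}) that occupy most of the paper's proof of Lemma \ref{small-lemma}; your proposal does not address this multi-scale issue at all.

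Second, your reduction target is not the one the proof needs. The paper does not verify the hypotheses of Proposition \ref{single prime} or of Theorem \ref{Favard-two-primes}; it proves a bespoke statement (Lemma \ref{small-lemma}): there is a single prime $p_1$ coprime to $|A|$ dividing every element of $S_A$ and satisfying $p_1^{E_1}<|A|$, which is exactly the input Proposition \ref{lb-general-gamma} requires to build the SLV set. Your dichotomy breaks down already at $|A|=7$: there $A$ mod $s$ can be an irreducible $(R_5:2R_3)$ structure, forcing $30\mid s$, so $s_A$ acquires three prime divisors and Theorem \ref{Favard-two-primes} does not apply; whether Proposition \ref{single prime} applies then depends on the full multi-scale structure of $S_A$, which your sketch leaves to the assertion that a suitable splitting ``can be arranged.'' The paper instead shows directly that $p_1=2$ works with $E_1\leq 2$, so $2^{E_1}\leq 4<7$. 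Without the vanishing-sum classification and the exponent-counting argument, the case check you describe cannot be carried out.
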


The cardinality bound $10$ in Theorem \ref{favard-small} is not a ``hard" one, in the sense that the arguments used in the proof of the theorem continue to work for many larger sets. However, the number of cases to consider increases significantly for sets of cardinality 11 and higher, making the proof more difficult to manage without contributing new ideas.

This article is organized as follows. The proof of Theorem \ref{main theorem} is given in Section \ref{main-proof}, with the notation and basic cyclotomic divisibility tools provided in Section \ref{cyclo-tools}. This part of the paper is 
self-contained and does not involve any Favard length considerations, 
so that the reader interested only in Theorem \ref{main theorem} can work with just these two sections.

The proof of Theorem \ref{Favard-two-primes} consists of several steps. 
In Section \ref{sec2}, we explain how to combine the arguments of \cite{BLV} (specifically, the {\it Set of Large Values} approach) with our Theorem \ref{main theorem} and our main intermediate result, Proposition \ref{lb-general-gamma}, to prove the theorem. The proof of Proposition \ref{lb-general-gamma}, given in Section \ref{SLV-section}, contains the main new ideas of this paper with regard to Favard length estimates. Specifically, while we follow the general approach of \cite{BLV}, we are able to use it more efficiently by splitting up the cyclotomic divisors of $A(X)$ and $B(X)$ into appropriate ``clusters'' and reorganizing the Set of Large Values construction accordingly. 

An important part of the proof of Theorem \ref{Favard-two-primes} is that the lower bounds on $A(1)$ depending on the cyclotomic divisors of $A(X)$ (such as those in Theorem \ref{main theorem}) have to match the size of a Set of Large Values that we can identify. 
In Section \ref{sec-examples}, we provide examples illustrating this. We also discuss briefly the barriers to extending our results to the case when $s_A$ or $s_B$ have 3 or more prime factors. Essentially, while certain simple examples of this type can be handled using the cuboid arguments in Section \ref{cuboid-lower-bound}, a systematic approach to the general case would require additional new ideas.
We conclude the paper with the proof of Theorem \ref{favard-small} in Section \ref{small-card}.

%%%%%%%%%%%%%%%%%%%%%%%%%%%%%%%%%%

\section{Proof of Theorem \ref{Favard-two-primes}}\label{sec2}

Our proof follows the general approach of \cite{BLV}, but with an additional argument to allow an application of Theorem \ref{main theorem}. Most of the proof in \cite{BLV} applies to general rational product sets; the only part that requires either the restriction  
$\max(|A|,|B|)\leq 6$ or the cyclotomic divisibility assumption in Proposition \ref{single prime} is the SLV (\textbf{Set of Large Values}) argument in Sections 5 and 6. Below, we provide a short summary of what is needed, then explain how to modify this step in our two-prime setting. The proofs of the intermediate results are postponed until later sections.

We first state the SLV result we need.
Define $\phi_A^{''} (\xi) = A''(e^{2 \pi i \xi})$, where $A^{''} (X)$ is the ``bad'' factor associated to $A(X)$ in Definition \ref{A1234}; the function $\phi^{''}_B$ is similarly defined. To extend the proof in \cite{BLV} for sets $A$ and $B$ as in Theorem \ref{Favard-two-primes}, it suffices to prove that the following holds under the assumptions of the theorem: given $t\in[\frac{1}{2},1]$, and given a large number $m$ depending on $n$ (in \cite{BLV}, $m$ is at most logarithmic in $n$),
there exists a Borel set $\Gamma =\Gamma(t,m) \subset [0,1]$ satisfying the estimates
\begin{equation}\label{BLV-diff-gamma}
\Gamma - \Gamma \subset \bigg\{\xi : \bigg|\prod\limits_{k = 0}^{m - 1} \phi_A^{''} (L^k \xi) \cdot \phi^{''}_B (L^k t \xi)\bigg| \geq L^{-C_1 m} \bigg\},
\end{equation}
\begin{equation}\label{BLV-size-gamma}
\big| \Gamma \big| \geq C_2 L^{- (1 - \epsilon)m} ,
\end{equation}
where 
$C_1, C_2,\epsilon > 0$ are constants independent of $N$ and $m$. The number 
$t = \tan (\theta)$ depends on the direction of an appropriately selected one-dimensional projection of $S_N$.  We refer to any set $\Gamma \subset [0,1]$ satisfying (\ref{BLV-diff-gamma})--(\ref{BLV-size-gamma}) as an \textbf{SLV set} for the function $\phi^{''}_t (\xi) = \phi^{''}_A (\xi)\phi^{''}_B (t \xi)$.  Thus, to prove Theorem \ref{Favard-two-primes}, it suffices to construct an SLV set $\Gamma \subset [0,1]$ as above, provided that $A,B \subset \nn$ satisfy the number theoretic assumptions of the theorem.

For $A \subset \nn$,  define $S_A$ and $s_A$ as in (\ref{def-SA}), with $S_A=:\{s_1,\dots,s_J\}$. Let also
$$\Sigma_A : =  \{x\in [0,1]: \ \Phi_s(e^{2\pi ix})=0 \textrm{ for some } s \in S_A \}.$$
Observe that we cannot have $\Phi_1|A$. Indeed, $\Phi_1(X)=X-1$, and if this were a factor of $A(X)$, we would have $A(1)=0$, a contradiction. In particular, we have $\{0,1\}\cap\Sigma_A=\emptyset$.

We then have the following single-scale SLV result for $A$ (see Section \ref{clusterdiv} for a proof).

\begin{proposition}\label{lb-general-gamma}
Let $A \subset \nn$, and let $s_A=\prod_{i=1}^K p_i^{n_i}$, where $p_1,\dots,p_K$ are distinct primes. For each $s_j \in S_A$, let $s_j = \prod_{i = 1}^{K} p_i^{\alpha_{i,j}}$ be its prime factor decomposition. Assume that there exists an index $i\in\{1,\dots,K\}$ such that $\alpha_{i,j} \geq 1$ (so that $p_i|s_j$) for all $j\in\{1,\dots,J\}$. 
Let $E_i := | \{\alpha_{i,1}, ..., \alpha_{i, J} \} |$ (that is, $E_i$ counts the number of \textbf{distinct} exponents appearing on the prime factor $p_i$ among the $s_j \in S_A$).
Then for any $0<\lambda_i<p_i^{-1}$
 there exists a 1-periodic set $\Gamma_A \subset \mathbb{R}$ satisfying
\begin{equation}\label{lb-distance-gamma}
\dist(\Gamma_A -\Gamma_A ,\Sigma_A)>0,
\end{equation}
\begin{equation}\label{lb-size-gamma}
\big|[0, 1] \cap \Gamma_A \big| > \lambda_i^{E_i}.
\end{equation}
 
\end{proposition}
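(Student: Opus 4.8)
The plan is to construct $\Gamma_A$ explicitly as a union of small intervals centered at suitable rational points, chosen so that the differences $\Gamma_A - \Gamma_A$ avoid every zero of every $\Phi_s$ with $s \in S_A$. The key observation is that, fixing the distinguished prime $p := p_i$, every $s_j \in S_A$ has the form $s_j = p^{\alpha_{i,j}} r_j$ with $p \nmid r_j$ and $\alpha_{i,j} \geq 1$; the zeros of $\Phi_{s_j}$ are the primitive $s_j$-th roots of unity, i.e.\ points $e^{2\pi i a / s_j}$ with $\gcd(a, s_j) = 1$. Writing such a point in lowest terms, its denominator is exactly $s_j$, hence is divisible by $p^{\alpha_{i,j}}$. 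So $\Sigma_A$ consists of rationals whose denominators are divisible by $p^{\beta}$ for some $\beta$ in the exponent set $\{\alpha_{i,1}, \dots, \alpha_{i,J}\}$, which has $E_i$ elements.

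Enumerate the distinct exponents as $1 \le \beta_1 < \beta_2 < \dots < \beta_{E_i}$ (padding is harmless; if some are zero we discard them since the hypothesis forces $\alpha_{i,j}\ge 1$). I would build $\Gamma_A$ iteratively as a self-similar ``Cantor-like'' set indexed by base-$p$ digit expansions, in $E_i$ stages. At stage $\ell$ I work at scale $p^{-\beta_\ell}$: within each surviving interval from the previous stage, I keep sub-intervals of relative length roughly $\lambda_i$ positioned at base-$p$ grid points in such a way that no difference of two kept points has a base-$p$ expansion with a nonzero digit exactly at position $\beta_\ell$ — concretely, restrict the $\beta_\ell$-th digit (counting from the point) of every kept center to lie in $\{0, 1, \dots, \lfloor \lambda_i p \rfloor - 1\}$ or a similar ``difference-free'' digit set (a Sidon-type / zero-sum-free choice; since $\lambda_i < p^{-1}$ we only need a single residue or a short interval of residues, and $\{0\}$ itself suffices after rescaling). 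After $E_i$ stages the retained set $\Gamma_A$ (made $1$-periodic by translating by $\ZZ$) has measure at least $\prod_{\ell=1}^{E_i}\lambda_i = \lambda_i^{E_i}$, giving (\ref{lb-size-gamma}). For (\ref{lb-distance-gamma}): if $\gamma, \gamma' \in \Gamma_A$, then by construction $\gamma - \gamma'$, reduced mod $1$, has base-$p$ expansion vanishing at each position $\beta_\ell$; equivalently $p^{\beta_\ell} (\gamma - \gamma')$ is not of the form (integer $+$ small), which forces $\gamma - \gamma'$ to stay a fixed distance $\delta > 0$ away from any rational with denominator divisible by $p^{\beta_\ell}$, hence away from all of $\Sigma_A$ (a finite union of finite sets, so the positive separation is uniform). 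One must also check $0 \notin$ the bad set, but that is automatic since $\Sigma_A \subset (0,1)$ by the remark preceding the proposition.

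The main obstacle — and the place that needs care — is making the digit-selection at the $E_i$ distinct scales genuinely \emph{compatible}, so that a single set $\Gamma_A$ simultaneously avoids all $E_i$ forbidden digit positions while its measure loses only one factor of $\lambda_i$ per scale rather than compounding badly. Because the scales $p^{\beta_1}, \dots, p^{\beta_{E_i}}$ are nested powers of the \emph{same} prime, the base-$p$ expansion lets these constraints act on disjoint blocks of digits, so in principle they decouple; the bookkeeping is to verify that imposing the constraint at position $\beta_{\ell+1}$ inside an interval already trimmed at positions $\beta_1, \dots, \beta_\ell$ still leaves a proportion $\ge \lambda_i$ of each such interval, independent of $\ell$. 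A secondary technical point is quantifying the separation $\delta$ in (\ref{lb-distance-gamma}): since $S_A$ is finite, the set of bad denominators is bounded, so $\Sigma_A$ is finite and $\delta := \dist(\Gamma_A - \Gamma_A, \Sigma_A)$ can be taken to be any positive number below the minimal gap forced by the digit restriction; I would simply record that it is positive rather than optimize it, since (\ref{lb-distance-gamma}) only asserts positivity. Finally, I would note that the hypothesis $\lambda_i < p_i^{-1}$ is exactly what guarantees the difference-free digit set at each stage is nonempty with the required density, and that the $1$-periodicity is obtained for free by taking the union of integer translates of the construction on $[0,1]$.
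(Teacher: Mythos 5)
There is a genuine gap: your separation mechanism does not work, because it only sees the prime $p_i$ and is blind to the part of each $s_j$ coprime to $p_i$. The points of $\Sigma_A$ are fractions $b/s_j$ in lowest terms with $s_j = p_i^{\alpha_{i,j}} r_j$, where $r_j>1$ is coprime to $p_i$ (recall that elements of $S_A$ are never prime powers). Such a fraction has a non-terminating base-$p_i$ expansion and is not constrained to have any particular digit at position $\beta_\ell$; restricting digits of the centers of $\Gamma_A$ therefore gives no lower bound on $\dist(\Gamma_A-\Gamma_A, b/s_j)$. Concretely, take $S_A=\{6\}$, $p_i=2$, so $E_i=1$ and $\Sigma_A=\{1/6,5/6\}$: the proposition demands a $1$-periodic $\Gamma_A$ with $|[0,1]\cap\Gamma_A|>\lambda$ for $\lambda$ arbitrarily close to $1/2$, whose difference set misses $\pm 1/6+\ZZ$. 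Your single-stage digit restriction at scale $2^{-1}$ produces (up to translation) one interval of length close to $1/2$ per period, whose difference set is an interval of length close to $1$ about $0$ and hence contains $1/6$; note that $1/6=0.0010101\ldots$ in base $2$, so it already satisfies any "digit $0$ at position $1$" constraint. No choice of digit sets in base $2$ can fix this, because the obstruction lives at the prime $3$. (A smaller symptom of the same problem: with $\lambda_i<p_i^{-1}$ your prescribed digit set $\{0,\dots,\lfloor\lambda_i p\rfloor-1\}$ is empty.)

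The paper's construction is built precisely to capture the coprime part. For the cluster of all $s_j$ with $p_i$-exponent equal to $\alpha$, writing $s_j=p_i^\alpha q_j$ with $(q_j,p_i)=1$, it sets $Q=p_i^{\alpha-1}\,\mathrm{lcm}(q_1,\dots,q_I)$ and takes $\Gamma$ to be the $\rho/2$-neighborhood of $\frac1Q\ZZ$ with $\rho\nearrow (Qp_i)^{-1}$; this is a union of $Q$ intervals per period with total measure approaching $p_i^{-1}$, and the elementary divisibility argument of Lemma \ref{q-lemma1} (if $|a/Q-b/s_j|<\rho$ then $s_j\mid Q$, contradiction) gives the separation. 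In the $S_A=\{6\}$ example this yields intervals around $0,\frac13,\frac23$, which is exactly the multi-interval structure your approach cannot produce. The $E_i$ cluster sets, one per distinct exponent $\alpha$, are then combined not by nesting digit constraints but by intersecting suitable \emph{translates}, found via the averaging/pigeonholing argument of Lemma \ref{q-lemma3}, which is what yields the product bound $\lambda_i^{E_i}$. Your instinct to gain one factor of $\lambda_i$ per distinct exponent is the right one, but both the single-exponent set and the way the exponents are combined need to be replaced.
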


To deduce Theorem \ref{Favard-two-primes} from Proposition \ref{lb-general-gamma}, we will use the proposition to find a multiscale SLV set $\Gamma$ avoiding small values of both $\phi''_A$ and $\phi''_B$ as in (\ref{BLV-diff-gamma}). Then we must check that the size of this set is bounded from below as in (\ref{BLV-size-gamma}). This is the step where we need Theorem \ref{main theorem}. The full argument is below.

\begin{proof}[Proof of Theorem \ref{Favard-two-primes}]
Begin with two sets $A,B \subset \nn$ satisfying the assumptions of Theorem \ref{Favard-two-primes}. We are assuming that lcm$(S_A)$ has only two prime factors, say $p_i$ and $p_j$. Applying the lower bound of Theorem \ref{main theorem} to $|A|$, we get
\begin{equation}\label{size of AB}
|A|\geq p_i^{E_{i}},
\end{equation}
where $E_i$ is defined as in Proposition \ref{lb-general-gamma}. By the definition of $S_A$, $p_i$ cannot divide $|A|$, so that the inequality in (\ref{size of AB}) must be strict. We may therefore choose $\lambda_i>0$ so that
\begin{equation}\label{choice of lambda}
\frac{1}{|A|}<\lambda_i^{E_i}<p_i^{-E_i}.
\end{equation}
Consider the associated set $\Gamma_A \subset \mathbb{R}$ provided by Proposition \ref{lb-general-gamma}.  Let  
$
\nu_A:=\big|[0, 1] \cap \Gamma_A \big|
$
be the density of $\Gamma_A$ in $[0,1]$. By (\ref{lb-size-gamma}) and (\ref{choice of lambda}), we have
\begin{equation}\label{size of nu-A}
\nu_A > \lambda_i^{E_i} > \frac{1}{|A|}.
\end{equation}
Similarly, applying the same construction to $B$, we produce a set $\Gamma_B \subset \mathbb{R}$ satisfying the conclusions of Proposition \ref{lb-general-gamma} and such that $\nu_B:=\big|[0, 1] \cap \Gamma_B \big|$ satisfies 
\begin{equation}\label{size of nu-B}
\nu_B > \frac{1}{|B|}.
\end{equation}
From equation \eqref{lb-distance-gamma}, there are constants $c_A, c_B > 0$ such that 
\begin{equation}
|\phi^{''}_A (\xi) |> c_A \quad \forall \xi \in \Gamma_A - \Gamma_A
\end{equation}
and similarly for $|\phi^{''}_B|$ on $\Gamma_B - \Gamma_B$.  Setting $\Gamma_{k,A} = L^{-k} \cdot \Gamma_A$, we then have $|\phi_A^{''} (L^k \xi) |> c_A$ on $\Gamma_{k,A}$, and similarly $|\phi_B^{''} (L^{k} \xi) |> c_B$ on $\Gamma_{k,B} = L^{-k} \cdot \Gamma_B$.

Fix a large integer $R > 0$. The same pigeonholing argument as in \cite[Proposition 5.1]{BLV} (see also the proof of Lemma \ref{q-lemma3} in this article) furnishes translation parameters $\tau_{k,A}, \tau_{k,B} \in [0,R]$ so that the set
\begin{equation}\label{favard-our-gamma-entire}
\Gamma : = \bigcap_{k = 0}^{m - 1} \big( \Gamma_{k,A}- \tau_{k,A} \big) \cap \bigcap_{k = 0}^{m-1} \big(t^{-1} \Gamma_{k, B} - \tau_{k, B} \big)
\end{equation}
satisfies the inequality
\begin{equation}\label{favard-lower-bound-explicit}
\big| \Gamma \cap [0,1] \big| \geq \bigg( \frac{(R - 1)(R-t^{-1})}{R^2} \nu_A \nu_B \bigg)^m.
\end{equation}
Since the inequalities in (\ref{size of nu-A}) and (\ref{size of nu-B}) are strict,  we may choose $R > 0$ to be large enough so that
$$
\frac{(R - 1)(R-t^{-1})}{R^2} \nu_A \nu_B  \geq L^{-(1 - \epsilon)}
$$
for some $\epsilon > 0$.
Note that our choice of $R = R(m,t)$ may depend upon $m$ (therefore $n$) and $t$; however, this does not affect the rest of the argument. 

Thus, the set $\Gamma$ defined in (\ref{favard-our-gamma-entire}) 
satisfies \eqref{BLV-size-gamma}. Moreover, since
$$
\Gamma_{k,A} - \Gamma_{k,A} = \big( \Gamma_{k,A} - \tau_{k,A} \big) - \big(\Gamma_{k,A} - \tau_{k,A} \big),
$$
and similarly for $\Gamma_{k, B} - \Gamma_{k,B}$, for each $0 \leq k \leq m - 1$ we have
$$
|\phi^{''}_A (L^{k} \xi)|,  \, |\phi^{''}_B (t L^k \xi)| > c,  \quad \forall \xi \in \Gamma - \Gamma,
$$
where $c = \min \{c_A, c_B \}$.  Hence,  \eqref{BLV-diff-gamma} holds with $C_1 = \frac{2 \log ( 1 /c )}{\log L}$, and our choice of $C_1 > 0$ is independent of $n$. Thus, the set $[0,1] \cap \Gamma$ with $\Gamma$ given by \eqref{favard-our-gamma-entire} is an SLV-set for the function $\phi^{''}_A(x) \phi^{''}_B(tx)$, as required.
\end{proof}

%%%%%%%%%%%%%%%%%%%%%%%%%%%%%%%%%%%%

\section{Constructing Single-Scale Sets $\Gamma$}\label{SLV-section}

The main new idea in the proof of Proposition \ref{lb-general-gamma} is the following ``cluster reduction''. 
Let $A \subset \nn$ be a finite set, and define $S_A$ and $s_A$ as in (\ref{def-SA}). Then we may split $S_A$ into subsets called \textbf{clusters}, construct an SLV set associated to each cluster, and then take the intersections of appropriate translates of them to get the set $\Gamma_A$ in the proposition. 

The results of Section \ref{oneclustergamma} and \ref{manyclusters} apply to any finite set $A\subset\NN$ and any splitting of $A$ into clusters.
In Section \ref{oneclustergamma}, we follow a number-theoretic approach due to Matthew Bond and the first author (cf. \cite[Conjecture 4.6]{L1}), which extends slightly that of \cite{BLV}. We should note here that Conjecture 4.6 in \cite{L1} turns out to be false, with a counterexample provided here in Section \ref{example1}. Therefore, if we simply tried to use the construction in Section \ref{oneclustergamma} with $A=\mathcal{C}$ as a single cluster, our quantitative bounds on $\Gamma_A$ would not be good enough. However, we can use the same construction more efficiently if we split up $A$ into smaller clusters, construct a cluster-dependent set $\Gamma(\calC)$ for each one, then take the intersection of appropriate translates of the sets thus obtained.
 
In Section \ref{clusterdiv}, we set up the cluster splitting that provides the requisite quantitative estimate \eqref{lb-size-gamma} in Proposition \ref{lb-general-gamma}.
This part requires the additional assumption (stated in the proposition) on the prime factorization of the elements of $S_A$.

\subsection{A single-cluster SLV set}\label{oneclustergamma}
Let $s_1,\dots,s_J$ be an enumeration of the distinct elements of $S_A$. 

\begin{definition}
A subset $\mathcal{C} \subset S_A$ is called a \textbf{cyclotomic divisor cluster}, or (for short) a \textbf{cluster}, of $A$.
\end{definition}

Fix a cluster $\mathcal{C}\subset S_A$. Relabelling the elements of $S_A$ if necessary, we may assume that $\mathcal{C} = \{s_1, ..., s_I \}$ for some $I\leq J$. Let $N = lcm(s_1, \cdots, s_I)$. Suppose that we can write $N =QU$,
where 
\begin{equation}\label{q-e1}
s_j \hbox{ does not divide } Q \hbox{ for any } j\in \{1,\dots,I\}.
\end{equation}
In particular, it follows that $(s_j ,U)>1$ for each $s_j \in \mathcal{C}$. It will be to our advantage to choose $Q$ as large as possible.

Each $s_j \in \mathcal{C}$ can be written as $s_j = r_jt_j$, where $r_j:=(s_j,Q)$ and $t_j:= s_j/(s_j,Q)$.  Let $T := \max(t_1,\dots,t_I)$.
Define
\begin{equation}\label{general-gamma}
\Gamma (\mathcal{C},\rho) =\left\{\xi\in \RR :\ \dist(\xi,\frac{1}{Q}\zz)<\frac{\rho}{2}\right\},                                                                                            \end{equation}
where $0<\rho<(QT)^{-1}$. The next two lemmas guarantee that the set in (\ref{general-gamma}) has the properties we need. Specifically, the required arithmetic structure of $\Gamma (\mathcal{C},\rho)$ is verified in 
Lemma \ref{q-lemma1}, and Lemma \ref{q-lemma2} furnishes a lower bound on $|[0,1] \cap \Gamma (\mathcal{C},\rho)|$.

\begin{lemma}\label{q-lemma1}
The set $\Gamma := \Gamma (\mathcal{C},\rho)$ defined above satisfies
$$
\text{dist} \big( \Gamma -  \Gamma, \Sigma(\mathcal{C}) \big) > 0
$$
where
\begin{equation}\label{q-cluster-sigma}
\Sigma(\mathcal{C}) : =  \{\xi \in \RR: \ \Phi_{s_j} (e^{2\pi i\xi})=0 \textrm{ for some } s_j\in \mathcal{C} \}.
\end{equation}
\end{lemma}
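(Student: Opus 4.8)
The plan is to show that $\Gamma - \Gamma$ stays uniformly away from every zero set of $\Phi_{s_j}$ with $s_j \in \mathcal{C}$, by exploiting the fact that $\Gamma$ is (up to a small fattening) the lattice $\frac{1}{Q}\ZZ$, while the roots of $\Phi_{s_j}$ on the circle sit at rationals with denominator $s_j$, and $s_j \nmid Q$. First I would record the structure of $\Gamma - \Gamma$: since $\Gamma$ consists of the $\rho/2$-neighbourhood of $\frac{1}{Q}\ZZ$, the difference set $\Gamma - \Gamma$ is contained in the $\rho$-neighbourhood of $\frac{1}{Q}\ZZ$, i.e.
\begin{equation}\label{gamma-diff-struct}
\Gamma - \Gamma \subset \Big\{ \xi \in \RR : \dist\big(\xi, \tfrac{1}{Q}\ZZ\big) < \rho \Big\}.
\end{equation}
Thus it suffices to bound from below the distance between the lattice $\frac{1}{Q}\ZZ$ and the set $\Sigma(\mathcal{C})$, since $\rho$ can be taken as small as we like (and in particular smaller than half of that distance).

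Next I would identify $\Sigma(\mathcal{C})$ explicitly. A point $\xi \in [0,1)$ lies in the zero set of $\Phi_{s_j}$ precisely when $\xi = d/s_j$ with $(d, s_j) = 1$; more generally, by $1$-periodicity, the full zero set of $\Phi_{s_j}$ on $\RR$ is $\{ d/s_j : d \in \ZZ,\ (d,s_j) = 1\}$. So $\Sigma(\mathcal{C}) = \bigcup_{j=1}^{I} \{ d/s_j : (d, s_j) = 1 \}$. The key arithmetic claim is then: for each $j$, and for every $a \in \ZZ$ and every $d$ with $(d, s_j) = 1$, we have $a/Q \neq d/s_j$, and in fact $|a/Q - d/s_j|$ is bounded below by $1/(Q s_j) \geq 1/(QT \cdot \max_j r_j)$ — actually the clean bound is $|a/Q - d/s_j| \geq \frac{1}{\mathrm{lcm}(Q,s_j)} $ whenever the two are unequal, because $a/Q - d/s_j$ is a nonzero rational with denominator dividing $\mathrm{lcm}(Q, s_j)$. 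To see they are never equal: if $a/Q = d/s_j$ then $a s_j = d Q$, so $s_j \mid dQ$; writing $g = (s_j, Q)$ and $s_j = g s_j'$, $Q = g Q'$ with $(s_j', Q') = 1$, we get $s_j' \mid dQ'$, hence $s_j' \mid d$; but $(d, s_j) = 1$ forces $s_j' = 1$, i.e. $s_j \mid Q$, contradicting \eqref{q-e1}. Therefore $\dist(\frac{1}{Q}\ZZ, \Sigma(\mathcal{C}))$ is a minimum over the finitely many $j \in \{1,\dots,I\}$ of strictly positive quantities, hence is itself a strictly positive number, call it $\delta_0$.

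Finally I would close the argument: choosing $\rho < \delta_0$ (which is compatible with the standing hypothesis $\rho < (QT)^{-1}$ after possibly shrinking $\rho$ — and note one could even observe $\delta_0 \geq (QT)^{-1} \cdot (\max_j r_j)^{-1}$ or a similar explicit bound, though we do not need an explicit value), \eqref{gamma-diff-struct} gives that every $\xi \in \Gamma - \Gamma$ satisfies $\dist(\xi, \frac{1}{Q}\ZZ) < \rho < \delta_0 \leq \dist(\eta, \frac{1}{Q}\ZZ)$ for every $\eta \in \Sigma(\mathcal{C})$, whence $\dist(\Gamma - \Gamma, \Sigma(\mathcal{C})) \geq \delta_0 - \rho > 0$. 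I expect the only real content to be the elementary number-theoretic non-collision claim $a/Q \neq d/s_j$; everything else is bookkeeping about neighbourhoods of lattices. The one point to be careful about is that $\Gamma - \Gamma$ should be compared with the full $1$-periodic zero set rather than just its representatives in $[0,1]$, but since both $\frac{1}{Q}\ZZ$ and $\Sigma(\mathcal{C})$ are $1$-periodic this causes no difficulty.
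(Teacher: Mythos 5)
Your proof is correct and follows essentially the same route as the paper: both reduce the lemma to the arithmetic fact that $a/Q = d/s_j$ with $(d,s_j)=1$ would force $s_j \mid Q$, contradicting (\ref{q-e1}) (your computation with $g=(s_j,Q)$ and $s_j'$ is exactly the paper's with $r_j$ and $t_j$), and then quantify the separation between $\frac{1}{Q}\ZZ$ and $\Sigma(\mathcal{C})$. One small remark: your own estimate $|a/Q - d/s_j| \ge 1/\mathrm{lcm}(Q,s_j) = 1/(Qt_j) \ge 1/(QT)$ already shows $\delta_0 \ge (QT)^{-1}$, so the "possibly shrinking $\rho$" caveat is unnecessary — every admissible $\rho < (QT)^{-1}$ satisfies $\rho < \delta_0$, and the lemma holds for the full stated range of $\rho$ (which matters, since Lemma \ref{q-lemma2} needs $\rho$ close to $(QT)^{-1}$).
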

\begin{proof}
It suffices to prove that $\Gamma-\Gamma$ is disjoint from $\Sigma(\calC)$. The conclusion then follows by starting with a slightly larger $\rho$ that still satisfies $0<\rho<(QT)^{-1}$, and then shrinking it a little bit.

Let $\xi\in\Sigma(\calC)$. Then there is an $s_j\in \mathcal{C}$ such that 
$\Phi_{s_j}(e^{2\pi i\xi})=0$, so that $\xi=b/s_j$ for some $b\in\zz$ with $(b,s_j)=1$. 
Suppose $\xi\in\Gamma-\Gamma$, then there is an integer $a$ such that
$$
\left|\frac{a}{Q}-\frac{b}{s_j}\right|
=\left|\frac{a}{Q}-\frac{b}{t_jr_j}\right|<\rho<\frac{1}{QT}.
$$
Multiply this by $Qt_j$:
$$
\left| at_j-\frac{bQ}{r_j}\right|<\frac{t_j}{T}\leq 1.
$$
But $r_j|Q$, so that $bQ/r_j$ is integer. Therefore 
$$at_j=\frac{bQ}{r_j},$$
and in particular $t_j$ divides $bQ/r_j$. Since $(b,s_j)=1$, we also have $(b,t_j)=1$, so that $t_j$ divides $Q/r_j$.
But then $s_j=t_jr_j$ divides $Q$, contradicting (\ref{q-e1}).
\end{proof}

\begin{lemma}\label{q-lemma2}
Let $\Gamma=\Gamma (\mathcal{C},\rho)$ be as above, and let $0<\lambda<T^{-1}$. Then $\Gamma$ is 1-periodic, and there exists a choice of $0<\rho<(QT)^{-1}$ such that
\begin{equation}\label{e-lambda}
\big| [0,1] \cap \Gamma  \big| > \lambda.
\end{equation}
\end{lemma}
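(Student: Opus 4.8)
The plan is to use the explicit description of $\Gamma(\mathcal{C},\rho)$ as the $\frac{\rho}{2}$-neighborhood of the lattice $\frac{1}{Q}\zz$, i.e. $\Gamma(\mathcal{C},\rho) = \frac{1}{Q}\zz + \left(-\frac{\rho}{2},\frac{\rho}{2}\right)$, and to compute its density in $[0,1]$ directly. The periodicity claim is immediate: since $1 = Q/Q \in \frac{1}{Q}\zz$, the lattice $\frac{1}{Q}\zz$ --- and hence $\Gamma$ --- is invariant under $\xi \mapsto \xi + 1$, so $\Gamma$ is $1$-periodic.

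For the measure estimate, I would first count lattice points and verify disjointness of the defining intervals. The points of $\frac{1}{Q}\zz$ in $[0,1)$ are $\frac{a}{Q}$, $a = 0,1,\dots,Q-1$, so there are $Q$ of them, spaced $\frac{1}{Q}$ apart, and each contributes an interval of length $\rho$. Since every $t_j = s_j/(s_j,Q)$ is a positive integer we have $T = \max(t_1,\dots,t_I) \geq 1$, so the standing hypothesis $\rho < (QT)^{-1}$ forces $\rho < \frac{1}{Q}$. Consequently the $Q$ intervals making up $\Gamma$ within one period are pairwise disjoint (the one centered at $0$ being split, by $1$-periodicity, into a piece near $0$ and a piece near $1$ of total length $\rho$), and therefore
\[
\big| [0,1] \cap \Gamma(\mathcal{C},\rho) \big| = Q\rho .
\]

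It remains to choose $\rho$. Given $0 < \lambda < T^{-1}$, we want $Q\rho > \lambda$ together with $\rho < (QT)^{-1}$; equivalently, we need an admissible $\rho$ in the interval $\big(\frac{\lambda}{Q}, \frac{1}{QT}\big)$. This interval is non-empty precisely because $\frac{\lambda}{Q} < \frac{1}{QT}$, which is exactly the assumption $\lambda < T^{-1}$. For any such $\rho$ we obtain $\big|[0,1]\cap\Gamma\big| = Q\rho > \lambda$, as claimed.

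I do not anticipate a real obstacle here; the only point requiring a little care is the disjointness of the intervals, which is what makes the density \emph{equal} to $Q\rho$ rather than merely bounded below by it. This is precisely where the normalization $\rho < (QT)^{-1}$, and in particular the fact that $T \geq 1$, is used; the rest is a one-line computation followed by the choice of $\rho$.
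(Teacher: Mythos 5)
Your proof is correct and follows essentially the same route as the paper's: both compute $\big|[0,1]\cap\Gamma\big| = Q\rho$ and then take $\rho$ close enough to $(QT)^{-1}$ so that $Q\rho > \lambda$, which is possible precisely because $\lambda < T^{-1}$. You merely spell out the interval-disjointness justification that the paper leaves implicit.
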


\begin{proof}
The periodicity is clear from the definition. Let $\rho\nearrow (QT)^{-1}$, then
$$
|[0,1] \cap \Gamma | = Q \rho\nearrow \frac{Q}{Q T}=\frac{1}{T}.
$$
Thus it suffices to choose $\rho$ sufficiently close to $(QT)^{-1}$.
 \end{proof}

 \subsection{Combining several clusters}\label{manyclusters}

\begin{lemma}\label{q-lemma3}
Suppose that $\mathcal{C}^1, ..., \mathcal{C}^k$ are clusters associated to some $A \subset \nn$. For each $l\in\{1,\dots,k\}$, let $\Gamma^l:=\Gamma(\calC^l,\rho^l)$ be the set defined in \eqref{general-gamma} and satisfying (\ref{e-lambda}), with the corresponding parameters $Q_l, T_l, \rho_l, \lambda_l$. Then there exist translation parameters $\tau_1, ..., \tau_k \in [0,1]$ such that
\begin{equation}\label{q-e4}
\bigg| [0,1] \cap  \bigcap_{l=1}^{k} \big( \Gamma^l+ \tau_l \big)  \bigg| > \prod_{l = 1}^{k} \lambda_l,
\end{equation}
Furthermore, if we define $\Gamma^{1,\dots,k}:=  \bigcap_{l=1}^{k} \big( \Gamma^l+ \tau_l \big)$ with this choice of $\tau_l$, then
\begin{equation}\label{q-e4a}
\text{dist} \big( \Gamma^{1,\dots,k} -  \Gamma^{1,\dots,k}, \bigcup_{l=1}^k \Sigma(\mathcal{C}^l) \big) > 0
\end{equation}
where $\Sigma(\mathcal{C}^l)$ is defined as in (\ref{q-cluster-sigma}) with $
\mathcal{C}=\mathcal{C}^l$.

\end{lemma}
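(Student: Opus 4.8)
The plan is to obtain \eqref{q-e4} by a first-moment argument over the translation parameters, in the spirit of the pigeonholing in \cite[Proposition 5.1]{BLV}, and then to read off \eqref{q-e4a} from Lemma \ref{q-lemma1} together with the elementary observation that the difference set of an intersection embeds into each individual difference set.

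For the size estimate, recall from Lemma \ref{q-lemma2} that each $\Gamma^l$ is $1$-periodic and that $\nu_l := |[0,1] \cap \Gamma^l| = Q_l \rho_l$ can be taken to satisfy $\nu_l > \lambda_l$. I would choose $\tau_1, \dots, \tau_k$ independently and uniformly at random in $[0,1]$. For fixed $\xi$ and fixed $l$, the $1$-periodicity of $\Gamma^l$ gives $\int_0^1 \mathbf{1}_{\Gamma^l + \tau_l}(\xi)\, d\tau_l = |[\xi-1,\xi] \cap \Gamma^l| = \nu_l$, independently of $\xi$; hence by Tonelli's theorem and the independence of the $\tau_l$,
\[
\mathbb{E}_{\tau_1,\dots,\tau_k}\!\left[\, \left| [0,1] \cap \bigcap_{l=1}^{k}(\Gamma^l + \tau_l)\right| \,\right]
= \int_0^1 \prod_{l=1}^k\!\left(\int_0^1 \mathbf{1}_{\Gamma^l+\tau_l}(\xi)\, d\tau_l\right) d\xi = \prod_{l=1}^k \nu_l > \prod_{l=1}^k \lambda_l .
\]
Since the average of the nonnegative quantity in brackets strictly exceeds $\prod_l \lambda_l$, some admissible choice of $\tau_1, \dots, \tau_k \in [0,1]$ must make that quantity itself exceed $\prod_l \lambda_l$; fixing such a choice yields \eqref{q-e4}. (One may also normalize $\tau_1 = 0$, again using $1$-periodicity.)

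Finally, with $\Gamma^{1,\dots,k} := \bigcap_{l=1}^k (\Gamma^l + \tau_l)$ for this choice of parameters, any two points of $\Gamma^{1,\dots,k}$ both lie in $\Gamma^l + \tau_l$ for every $l$, so $\Gamma^{1,\dots,k} - \Gamma^{1,\dots,k} \subset (\Gamma^l + \tau_l) - (\Gamma^l + \tau_l) = \Gamma^l - \Gamma^l$. By Lemma \ref{q-lemma1}, $\dist(\Gamma^l - \Gamma^l, \Sigma(\mathcal{C}^l)) > 0$ for every $l$, and therefore $\dist(\Gamma^{1,\dots,k} - \Gamma^{1,\dots,k}, \Sigma(\mathcal{C}^l)) > 0$ for every $l$; taking the minimum over the finitely many indices $l$ and using $\dist(E, \bigcup_l F_l) = \min_l \dist(E, F_l)$ gives \eqref{q-e4a}. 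I do not expect any genuine obstacle here: the only point needing care is the interchange of integrals in the display above, which is legitimate because the integrand is bounded and nonnegative, so Tonelli applies; everything else is bookkeeping.
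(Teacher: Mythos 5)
Your proposal is correct and follows essentially the same route as the paper's proof: an averaging (first-moment) argument over independent uniform translations, using the $1$-periodicity of each $\Gamma^l$ to factor the expectation, followed by the observation that $\Gamma^{1,\dots,k}-\Gamma^{1,\dots,k}\subset \Gamma^l-\Gamma^l$ for every $l$ so that \eqref{q-e4a} follows from Lemma \ref{q-lemma1} for any choice of the $\tau_l$. No gaps.
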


\begin{proof}
We first note that for any fixed $l$, and for any choice of $\tau_1,\dots,\tau_k$,
$$
\text{dist} \big( \Gamma^{1,\dots,k} -  \Gamma^{1,\dots,k},  \Sigma(\mathcal{C}^l) \big)
\geq \text{dist} \big( \Gamma^l -  \Gamma^l, \Sigma(\mathcal{C}^l) \big)
>0
$$
by Lemma \ref{q-lemma1}. Hence (\ref{q-e4a}) holds for any choice of the parameters $\tau_l$.

We now prove (\ref{q-e4}).
The proof uses essentially the same argument as the proof of \cite[Proposition 5.1]{BLV}, except that the large parameter $M$ is not needed since all the sets $\Gamma^l$ are 1-periodic.  For each $x \in [0, 1)$, consider the function
$$
\Psi (x) : = \int_{[0,1]^k}  \prod_{l = 1}^{k} \mathbf{1}_{\Gamma^l} (x - \tau_l)   d \tau_1 \cdots d \tau_k,
$$
where $\tau_l \in [0,1]$ are independent translation parameters, and the addition is mod 1.
Using this independence, we evaluate $\Psi (x)$ as a product of single-variable averages. We have
$$
\int_0^1 \mathbf{1}_{\Gamma^l} (x - \tau_l) d \tau_l = \big| [0, 1] \cap \Gamma^l \big| > \lambda_l,
$$
by (\ref{e-lambda}) and the 1-periodicity of each $\Gamma^l$. This leads to the pointwise lower bound
$$
\Psi (x) >  \prod_{l} \lambda_l .
$$
Integrating and applying Fubini's theorem, we get that
$$
\prod_l \lambda_l < \int_0^1 \Psi (x) dx 
= \int_{[0,1]^k} \bigg|[0,1] \cap  \bigcap_l \big( \Gamma^l+ \tau_l \big)  \bigg| d \tau_1 \cdots d \tau_k,
$$
In particular, there exist translation parameters $\tau_1, \dots, \tau_k \in [0,1]$ so that
$$
\bigg|[0,1] \cap  \bigcap_l \big( \Gamma^l +\tau_l \big) \bigg| > \prod_l \lambda_l,
$$
as claimed.
\end{proof}

\subsection{Choosing the clusters}\label{clusterdiv}

We divide the set $S_A$ into clusters based on their number-theoretic properties.  Lemma \ref{q-lemma3} then produces an SLV set associated to these clusters, with an appropriate lower bound on its measure. We first introduce notation that allows us to partition the set $S_A : = \{s_1, ..., s_J \}$ in a useful way.

Each $s_j \in S_A$ has the form $s_j = p_1^{\alpha_{1,j}} \cdot p_2^{\alpha_{2,j}} \cdots p_K^{\alpha_{K,j}}$ where $p_1, ..., p_K$ are the distinct prime divisors of $s_A$. For each $1 \leq i \leq K$, let
$$
\textsf{EXP} (i) : = \{\alpha_{i,1},\dots, \alpha_{i,J}\}.
$$
In words, $\textsf{EXP}(i)$ contains the exponents appearing on $p_i$ among the $s_j \in S_A$. Observe that some of the numbers $\alpha_{i,1},\dots,\alpha_{i,J}$ may be equal.
We set $\# \textsf{EXP} (i) = E_i$, the number of \textbf{distinct} exponents in  
$\textsf{EXP} (i) $.

\begin{definition}\label{power-cluster-defn}
Fix  $1 \leq i \leq K$. Then, for each $\alpha \in \textsf{EXP}(i)$,  define
$$
\mathcal{C}^{i, \alpha} = \{s_j \in S_A : s_j = p_i^{\alpha} \cdot q, \textrm{ for some } q \in \mathbb{N} \textrm{ with } (p_i, q) = 1\}
$$
That is, $\mathcal{C}^{i,\alpha}$ is the cluster of $s_j$ such that $\alpha_{i,j}=\alpha$.
\end{definition}

We have the following lemma concerning clusters associated to non-zero $\alpha \in \mathsf{EXP}(i)$. 

\begin{lemma}\label{power-cluster-lemma}
Fix $1 \leq i \leq K$. Let $\alpha \in \mathsf{EXP}(i)\setminus\{0\}$, and let $\calC:= \mathcal{C}^{i,\alpha}$. Let $0<\lambda_i<p_i^{-1}$.
Then there is a choice of $Q = Q^{i, \alpha}$ and $\rho=\rho^{i,\alpha}$
such that the set
$\Gamma_{i,\alpha}:= \Gamma(\mathcal{C}^{i,\alpha},\rho^{i,\alpha})$ constructed in Section \ref{oneclustergamma} satisfies the estimate 
$$|[0,1] \cap \Gamma_{i,\alpha} | > \lambda_i.
$$
\end{lemma}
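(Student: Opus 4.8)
The plan is to apply the single-cluster construction of Section \ref{oneclustergamma} to $\calC = \mathcal{C}^{i,\alpha}$, choosing the factorization $N = QU$ as favorably as possible, and then to invoke Lemma \ref{q-lemma2} to pass from a lower bound on $|[0,1]\cap\Gamma(\calC,\rho)|$ of the form $> \lambda$ with $\lambda < T^{-1}$ to the desired bound $> \lambda_i$ with $\lambda_i < p_i^{-1}$. Thus the whole statement reduces to the claim: one can choose the decomposition $N = Q^{i,\alpha} U$ satisfying \eqref{q-e1} so that the resulting quantity $T = T^{i,\alpha} = \max_j t_j$ satisfies $T \leq p_i$. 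Indeed, once $T \le p_i$, any $0 < \lambda_i < p_i^{-1} \le T^{-1}$ is an admissible choice of $\lambda$ in Lemma \ref{q-lemma2}, and that lemma produces $\rho = \rho^{i,\alpha} \in (0, (QT)^{-1})$ with $|[0,1]\cap\Gamma_{i,\alpha}| > \lambda_i$.

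So the core of the proof is the choice of $Q$. First I would observe that every $s_j \in \calC = \mathcal{C}^{i,\alpha}$ has the form $s_j = p_i^\alpha q_j$ with $(p_i, q_j) = 1$ and $\alpha \geq 1$. Set $N = \mathrm{lcm}(\{s_j : s_j\in\calC\})$; then $N = p_i^\alpha \cdot m$ where $m = \mathrm{lcm}(\{q_j\})$ is coprime to $p_i$. The natural candidate is to take $U = p_i$ and $Q = p_i^{\alpha-1} m = N/p_i$. I need to verify \eqref{q-e1}, i.e. that no $s_j$ divides $Q = p_i^{\alpha-1}m$. But $s_j = p_i^\alpha q_j$ contains $p_i^\alpha$, while $Q$ contains only $p_i^{\alpha-1}$; since $\alpha \geq 1$ this is a strictly smaller power of $p_i$, so $s_j \nmid Q$. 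Hence this choice of $Q$ is legitimate. Now compute $t_j = s_j/(s_j, Q)$: we have $(s_j, Q) = (p_i^\alpha q_j,\ p_i^{\alpha-1}m) = p_i^{\alpha-1}\cdot (q_j, m) = p_i^{\alpha-1} q_j$ (using $q_j \mid m$ and $(p_i,q_j)=1$), so $t_j = p_i^\alpha q_j / (p_i^{\alpha-1} q_j) = p_i$. Therefore $t_j = p_i$ for every $j$, and $T = \max_j t_j = p_i$, exactly as needed.

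With $T = p_i$ in hand, the conclusion is immediate: pick any $\lambda_i$ with $0 < \lambda_i < p_i^{-1} = T^{-1}$, apply Lemma \ref{q-lemma2} with $\lambda = \lambda_i$ to obtain $\rho = \rho^{i,\alpha} \in (0,(QT)^{-1})$ with $|[0,1]\cap\Gamma(\calC,\rho)| > \lambda_i$, and set $\Gamma_{i,\alpha} := \Gamma(\mathcal{C}^{i,\alpha},\rho^{i,\alpha})$ with $Q = Q^{i,\alpha} = N/p_i$. (One should also remark that $\calC$ is nonempty because $\alpha \in \mathsf{EXP}(i)$, so the lcm and the maximum are taken over a nonempty set and everything is well-defined.) I do not expect any real obstacle here: the only slightly delicate point is checking that the chosen $Q$ really satisfies the non-divisibility condition \eqref{q-e1} and recomputing $T$ correctly, and both follow from the bookkeeping above using $\alpha \geq 1$ and the coprimality $(p_i, q_j) = 1$. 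It may be worth noting in passing that this is why we must exclude $\alpha = 0$: if $\alpha = 0$ then $s_j$ would not be divisible by $p_i$ at all, the "peel off one factor of $p_i$" trick would be unavailable, and there would be no control of $T$ in terms of $p_i$.
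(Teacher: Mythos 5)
Your proposal is correct and follows essentially the same route as the paper: both take $Q = p_i^{\alpha-1}\,\mathrm{lcm}(q_1,\dots,q_I) = N/p_i$, compute $(s_j,Q)=p_i^{\alpha-1}q_j$ so that $t_j=p_i$ for every $j$ and $T=p_i$, and then apply Lemma \ref{q-lemma2} with $\lambda=\lambda_i<p_i^{-1}=T^{-1}$. Your explicit verification of \eqref{q-e1} (left implicit in the paper) is a small but welcome addition.
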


\begin{proof}
Since $i$ and $\alpha$ are fixed throughout the proof, we suppress them for now and use the notation of Section \ref{oneclustergamma} with $\calC= \mathcal{C}^{i,\alpha}$.

Relabelling the elements of $S_A$ if necessary, we may assume that $\calC=\{s_1,\dots,s_I\}$ for some $I\leq J$. 
Each $s_j\in\calC$ can be written as 
$s_j = p_i^{\alpha} q_j$, where $ (q_j,p_i)=1.$ We then define
$$
Q:=  p_i^{\alpha-1} \hbox{\, lcm}(q_1,\dots,q_I).
$$
Then for each $j\in\{1,\dots,I\}$ we have $(s_j,Q)= p_i^{\alpha-1} q_j$ and
$$
t_j=\frac{s_j}{(s_j,Q)}=p_i.
$$
It follows that we can take 
$T= \max(t_j)=p_i$. 

Let $\Gamma_{i,\alpha}:=\Gamma (\mathcal{C},\rho)$ be the set defined in \eqref{general-gamma}. Then 
we may choose $0<\rho<(QT)^{-1}$ so that
the conclusions of Lemmas \ref{q-lemma1} and \ref{q-lemma2} hold with $\lambda=\lambda_i$, as claimed.
\end{proof}

\begin{proof}[Proof of Proposition \ref{lb-general-gamma}] 
We now assume that $A\subset\NN$ satisfies the hypotheses of the proposition.
In particular, we have $0\not\in \mathsf{EXP}(i)$; this is simply a rephrasing of the requirement that $\alpha_{i,j} \geq 1$. 

Let $\lambda_i<p_i^{-1}$. For each $\alpha \in \mathsf{EXP}(i)$, let $\Gamma_{i,\alpha}$ be the set provided by Lemma \ref{power-cluster-lemma}.   Applying Lemma \ref{q-lemma3}, we find translation parameters $z_{\alpha}$ so that
\begin{equation}\label{q-cluster-estimate-gamma}
\big| [0,1] \bigcap_{\alpha \in \mathsf{EXP(i)}}(\Gamma _{i,\alpha} + z_{\alpha}  \big)\big| > \lambda_i^{E_i},
\end{equation}
Let $\Gamma_A : = \bigcap_{\alpha}(\Gamma_{i,\alpha} + z_{\alpha} \big)$; then \eqref{q-cluster-estimate-gamma} shows that $\Gamma_A$ has the correct size.  
Moreover, since $\Sigma_A=\bigcup_{\alpha\in \mathsf{EXP}(i)}
 \Sigma(\mathcal{C}^{i,\alpha})$, 
by (\ref{q-e4a}) we have
$$
\text{dist} \big( \Gamma_A -  \Gamma_A, \Sigma_A \big) > 0,
$$
where $\Sigma(\mathcal{C}^{i,\alpha})$ is defined as in (\ref{q-cluster-sigma}) with $
\mathcal{C}=\mathcal{C}^{i,\alpha}$.
This proves the proposition.
\end{proof}

\begin{remark}
The cluster splitting above is sufficient for our purposes if $K=2$. In this case, since each element of $S_A$ must have at least two distinct prime factors (cf. Remark \ref{remark1}), we must have $\alpha_{i,j}\geq 1$ for each $i\in\{1,2\}$ and each $j$. Thus the construction above works with both choices of $i\in\{1,2\}$. If $K\geq 3$, we would not be able to assume that, but we could still construct an SLV set by splitting the elements of $S_A$ into disjoint sets $S_{A,i}$ such that $p_i|s$ for each $s\in S_{A,i}$, applying the construction in Section \ref{clusterdiv} to each such subset, and then proceeding as in Lemma \ref{q-lemma3} to take the intersection of appropriate translates of the sets thus obtained. We expect that optimizing over constructions of this type should generate SLV sets that approach the maximal allowed size. However, in the general case, we do not know how to prove matching lower bounds on the size of $A$. See Section \ref{cuboid-lower-bound} for further discussion.
\end{remark}

%%%%%%%%%%%%%%%%%%%%%%%%%%%%%

\section{Cyclotomic divisibility tools}\label{cyclo-tools}

In this section, we develop the tools needed to prove our results on cyclotomic factor decompositions and vanishing sums of roots of unity. Some of the notation here has been borrowed from \cite{LaLo1} and adapted to our setting.

\subsection{Multisets}
 
We will work in the ambient group $\ZZ_M$, where $M=p_1^{n_1}\dots p_K^{n_K}$, $p_1,\dots,p_K$ are distinct primes, and $n_1,\dots,n_K\in\NN$. We will use $m$ and $N$ (possibly with subscripts) to denote divisors of $M$. For the purpose of proving Theorem \ref{main theorem}, it would be sufficient to consider the case $K=2$. The discussion in Section \ref{sec-examples} will require the more general notation. 

We use $A(X)$, $B(X)$, etc. to denote polynomials modulo $X^M-1$ with integer coefficients. 
Each such polynomial $A(X)=\sum_{a\in\ZZ_M} w_A(a) X^a$ is associated with a weighted multiset in $\ZZ_M$, which we will also denote by $A$, with weights $w_A(x)$ assigned to each $x\in\ZZ_M$. (If the coefficient of $X^x$ in $A(X)$ is 0, we set $w_A(x)=0$.) In particular, if $A$ has $\{0,1\}$ coefficients, then
$w_A$ is the characteristic function of a set $A\subset \ZZ_M$. We will use $\calm(\ZZ_M)$ to denote the 
family of all weighted multisets in $\ZZ_M$, and reserve the notation $A\subset \ZZ_M$ for sets.
We will also use $\calm^+$ to denote the family of all weighted multisets in $\ZZ_M$ with nonnegative weights:
$$
\calm^+=\{A\in\calm(\ZZ_M): \ w_A(a)\geq 0\hbox{ for all }a\in\ZZ_M\}.
$$
Let $A\in\calm^+(\ZZ_M)$, with the corresponding mask polynomial $A(X)$. We use $|A|$ to denote the cardinality of $A$ with multiplicity, so that $|A|=A(1)=\sum_{x\in\ZZ_M}w_A(x)$.
If $\Lambda\subset\ZZ_M$, we use $A\cap\Lambda$ to denote intersection with multiplicity, so that $w_{A\cap\Lambda}(x)=w_A(x)w_\Lambda(x)$.

If $N|M$, then any $A\in \calm(\ZZ_M)$ induces a weighted multiset $A$ mod $N$ in $\ZZ_N$, with the corresponding mask polynomial $A(X)$ mod $(X^N-1)$ and induced weights 
\begin{equation}\label{induced-weights}
w_A^N(x)= \sum_{x'\in\ZZ_M: x'\equiv x\,{\rm mod}\, N} w_A(x'),\ \ x\in\ZZ_N.
\end{equation}
We extend the multiset notation to $\ZZ_N$, so that for example $\calm(\ZZ_N)$ and $\calm^+(\ZZ_N)$ denote the appropriate families of multisets.

We use convolution notation $A*B$ to denote the weighted sumset of $A,B\in\calm(\ZZ_M)$:
 $$
 (A*B)(X)=A(X)B(X),\ \ 
 w_{A*B}(x)=(w_A*w_B)(x)=\sum_{y\in\ZZ_M} w_A(x-y)w_B(y).$$
 If one of the sets is a singleton, say $A=\{x\}$, we write $x*B=\{x\}*B$.  

\subsection{Grids and fibers}\label{grids-fibers}
We encourage the reader to use the geometrical interpretation from \cite{LaLo1}, based on the Chinese Remainder Theorem. Specifically, let $M_i=M/p_i^{n_i}$ for $i=1,\dots,K$. Then any $x\in\ZZ_M$ can be written uniquely as 
$$
x\equiv \sum_{i=1}^{K} x_iM_i\hbox{ mod }M,\hbox{ where }x_i\in\{0,1,\dots,p_i-1\}.
$$
Thus $\ZZ_M$ can be thought of as a $K$-dimensional lattice $\ZZ_{p_1^{n_1}}\oplus\dots\oplus \ZZ_{p_K^{n_K}}$. In this interpretation, $(x_1,\dots,x_K)$ are the coordinates of $x\in\ZZ_M$, and each of the primes $p_1,\dots,p_K$ corresponds to one of the cardinal directions. A similar picture, possibly with fewer directions, applies to $\ZZ_N$ with $N|M$. 

For $D|N|M$, a {\em $D$-grid} in $\ZZ_N$ is a set of the form
$$
\Lambda^N(x,D):= x*D\ZZ_N=\{x'\in\ZZ_N:\ D|(x-x')\}
$$
for some $x\in\ZZ_N$.  
If $N=\prod_{j=1}^K p_j^{\alpha_j}$ is a divisor of $M$, with  $0\leq \alpha_j\leq n_j$, we let 
$$
D(N):= \prod_{j=1}^K p_j^{\gamma_j},\ \hbox{ where }
\gamma_j=\max(0,\alpha_j-1)\hbox{ for }j=1,\dots,K.
$$

Let $p_i$ be a prime factor of $N$. An {\em $N$-fiber in the $p_i$ direction} is a set of the form $x*F^N_i\subset\ZZ_N$, where $x\in\ZZ_N$ and
\begin{equation}\label{def-Fi}
F^N_i=\{0,N/p_i,2N/p_i,\dots,(p_i-1)N/p_i\}.
\end{equation}
Thus $x*F^N_i=\Lambda^N(x,N/p_i)$.

%%%%%%%%%%%%%%%%%%%%%%%%%%%%

\subsection{Cuboids and structure results}
As before, we work in $\ZZ_M$, where $M=\prod_{i=1}^K p_i^{n_i}$, and let $A\in\calm(\ZZ_M)$.
We will use the following notation from \cite{LaLo1}. For multisets $\Delta\in\calm(\ZZ_N)$, where $N|M$, we define the \textbf{$\Delta$-evaluations of $A$ in $\ZZ_N$:}
\begin{equation}\label{delta-eval}
\bbA^N[\Delta]=\sum_{x\in\ZZ_N}w_A^N(x)w_\Delta^N(x).
\end{equation}
The following special case is of particular interest.

\begin{definition}
\label{def-N-cuboids}
Let $M$ and $N$ be as above, so that $N=\prod_{i=1}^K p_i^{n_i-\alpha_i}$, with $0\leq \alpha_i\leq n_i$ for each $i=1,\dots,K$. An \textbf{$N$-cuboid} is a multiset $\Delta \in \calm (\zz_N)$ associated to a mask polynomial of the form
\begin{equation}\label{def-delta}
\Delta(X)= X^c\prod_{j:p_j|N} (1-X^{d_jN/p_j})
\end{equation}
 with $(d_j,p_j)=1$ for all $j$. 
\end{definition}

The geometric interpretation of $N$-cuboids $\Delta$ is as follows. With notation as in Definition \ref{def-N-cuboids}, 
recall that $D(N)=N/\prod_{j:p_j|N}p_j$. Then the ``vertices'' $x\in\ZZ_N$ with $ w^N_\Delta(x)\neq 0$ form a full-dimensional rectangular box in the grid $\Lambda(c,D(N))$, with one vertex at $c$ and alternating $\pm 1$ weights.

The following cyclotomic divisibility test has been known and used previously in the literature. The equivalence between (i) and (iii) is the Bruijn-R\'edei-Schoenberg theorem on the structure of vanishing sums of roots of unity (see \cite{deB}, \cite{LL}, \cite{Mann}, \cite{Re1}, \cite{Re2}, \cite{schoen}). For the equivalence (i) $\Leftrightarrow$ (ii), see e.g.  \cite[Section 3]{Steinberger}, \cite[Section 3]{KMSV}.

\begin{proposition}\label{cuboid}
Let $A\in\calm(\ZZ_N)$. Then the following are equivalent:

\medskip

(i) $\Phi_N(X)|A(X)$,

\medskip

(ii) For all $N$-cuboids $\Delta$, we have
\begin{equation}\label{id-3a}
\bbA^N[\Delta]=0,
\end{equation}

\medskip

(iii) $A$ mod $N$ is a linear combination of $N$-fibers, so that
$$A(X)=\sum_{i:p_i|N} P_i(X)F^N_i(X) \mod X^N-1,$$
where $P_i(X)$ have integer (but not necessarily nonnegative) coefficients.
\end{proposition}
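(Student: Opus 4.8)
The plan is to prove the cycle of implications $(i)\Rightarrow(iii)\Rightarrow(ii)\Rightarrow(i)$, rather than all three equivalences directly; this is the cleanest route given the tools in the excerpt. Throughout I work with the mask polynomial $A(X)$ taken modulo $X^N-1$, and I recall that $1+X+\dots+X^{N-1}=\prod_{s\mid N,\,s\neq 1}\Phi_s(X)$.

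\emph{Step 1: $(i)\Rightarrow(iii)$.} I would argue by induction on the number of prime factors of $N$, or equivalently on $N$ through its divisor structure. The key observation is that $F^N_i(X)=1+X^{N/p_i}+\dots+X^{(p_i-1)N/p_i}=\Phi_{p_i}(X^{N/p_i})$, and that the ideal generated by all the $F^N_i(X)$ for $p_i\mid N$, inside $\mathbb{Z}[X]/(X^N-1)$, is precisely the set of polynomials divisible by $\Phi_N(X)$. Concretely: suppose $\Phi_N\mid A$. Pick a prime $p_i\mid N$. Writing each exponent $a$ of a monomial of $A$ in its coordinate form via CRT, one can perform a ``fiber reduction'' in the $p_i$-direction: for every grid line in the $p_i$-direction, subtract off an appropriate integer multiple of the fiber $x\ast F^N_i$ so that what remains is supported on a single residue slab $\{x : x_i = 0\}$, i.e. on $\ZZ_{N/p_i}$ (as a multiset). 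This expresses $A(X)\equiv P_i(X)F^N_i(X) + A_1(X)$ where $A_1$ is supported on $\ZZ_{N/p_i}$. Now $\Phi_N(X)\mid F^N_i(X)$ is false in general, but $\Phi_N(X)\mid A(X)$ together with the structure forces $\Phi_N \mid A_1$ as a polynomial in $\ZZ[X]$; since $A_1$ only involves exponents divisible by $p_i$ it is a polynomial in $X^{p_i}$, and $\Phi_N(X)\mid A_1(X)$ with $A_1(X)=A_1^*(X^{p_i})$ translates to $\Phi_{N/p_i}(Y)\mid A_1^*(Y)$ when $p_i^2\mid N$, or $A_1^*\equiv 0$ when $p_i\|N$. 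Induction on the smaller modulus $N/p_i$ then expresses $A_1^*$ as a combination of $(N/p_i)$-fibers in the remaining directions, which pull back to $N$-fibers. Assembling gives $(iii)$. This inductive bookkeeping is the part I expect to be the main obstacle — keeping track of which fibers in $\ZZ_{N/p_i}$ lift to which fibers in $\ZZ_N$, and handling the base case ($N$ prime, where $\Phi_N = F^N_1$ and the claim is immediate).

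\emph{Step 2: $(iii)\Rightarrow(ii)$.} This is the easy linear-algebra step. By linearity of $\Delta$-evaluations in $A$, it suffices to check $\bbA^N[\Delta]=0$ when $A(X)=X^e F^N_i(X)$ is a single fiber times a monomial, for some prime $p_i\mid N$. Fix an $N$-cuboid $\Delta(X)=X^c\prod_{j:p_j\mid N}(1-X^{d_jN/p_j})$ with $(d_j,p_j)=1$. The factor $(1-X^{d_iN/p_i})$ in $\Delta$ corresponds, on the character side, to evaluating at points $z$ with $z^{N/p_i}$ a primitive $p_i$-th root of unity; but $F^N_i(e^{2\pi i \xi})=\Phi_{p_i}(e^{2\pi i N\xi/p_i})$ vanishes exactly there. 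More directly and combinatorially: the vertices of $\Delta$ in the $p_i$-direction form a full coset of $F^N_i$ (since $(d_i,p_i)=1$ means $\{0, d_iN/p_i, 2d_iN/p_i,\dots\}$ is a permutation of $F^N_i$ as a subset of $\ZZ_N$), with alternating weights summing against the uniform weight-$1$ fiber to zero. So the $p_i$-direction sum already annihilates the contribution of $F^N_i$. I would phrase this as: factor the sum defining $\bbA^N[\Delta]$ according to the product structure of $\Delta$, isolate the $p_i$-coordinate sum, and observe it equals $\sum_{t=0}^{p_i-1} 1\cdot (\pm 1)$ over a full residue system $=0$. Hence $\bbA^N[\Delta]=0$ for every cuboid.

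\emph{Step 3: $(ii)\Rightarrow(i)$.} Here I would use a dimension/counting argument. The $\Delta$-evaluations, as $\Delta$ ranges over $N$-cuboids (and their translates), span the dual space of linear functionals detecting exactly the failure of $\Phi_N\mid A$. More concretely: condition $(ii)$ says that within every grid $\Lambda^N(c, D(N))$, the weights $w_A^N$ restricted to that grid, viewed as a function on the ``box'' $\prod_{j:p_j\mid N}\ZZ_{p_j}$, have vanishing ``full difference'' $\prod_j (1-\sigma_j)$ for every choice of shifts $\sigma_j$ by $d_j N/p_j$ with $(d_j,p_j)=1$. A standard finite-difference argument (or: inclusion–exclusion over the box vertices) shows this is equivalent to $w_A^N$ being, on each such grid, a sum of functions each constant in at least one $p_j$-direction — which is precisely the fiber decomposition $(iii)$, hence $(i)$ by Step 1's reversibility, or one can close the loop directly by invoking $(iii)\Rightarrow(i)$: a combination of $N$-fibers $\sum_i P_i(X)F^N_i(X)$ is divisible by $\Phi_N(X)$ because $\Phi_N(X)\mid F^N_i(X)$... which is again false in general, so I must instead note $\Phi_N(e^{2\pi i/N})\cdot(\text{anything})$; rather, evaluate at a primitive $N$-th root of unity $\omega$: $F^N_i(\omega)=1+\omega^{N/p_i}+\dots=\Phi_{p_i}(\omega^{N/p_i})=0$ since $\omega^{N/p_i}$ is a primitive $p_i$-th root of unity. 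Thus every fiber, hence any combination of fibers, vanishes at $\omega$, so $\Phi_N\mid A$. This last remark is the honest way to get $(iii)\Rightarrow(i)$ and completes the cycle. I expect Step 3's finite-difference equivalence to be routine but notation-heavy; the genuine content is concentrated in Step 1.
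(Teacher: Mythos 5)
First, a point of comparison: the paper does not prove Proposition \ref{cuboid} at all --- it is stated as a known result, with (i)~$\Leftrightarrow$~(iii) attributed to de Bruijn--R\'edei--Schoenberg and (i)~$\Leftrightarrow$~(ii) cited from \cite{Steinberger}, \cite{KMSV}. So there is no in-paper proof to match; what follows is an assessment of your argument on its own terms. Your overall architecture (the cycle (i)~$\Rightarrow$~(iii)~$\Rightarrow$~(ii)~$\Rightarrow$~(i)) is a sensible one, and Steps 2 and 3 are essentially sound. One local slip in Step 2: an $N$-cuboid has only \emph{two} vertices in each direction (the factor is $1-X^{d_iN/p_i}$, not a full sum over $t=0,\dots,p_i-1$), so the cancellation is not ``a full residue system summed against $\pm1$''; the correct mechanism is that the translate $e*F^N_i$ is invariant under the shift by $d_iN/p_i$ (being a union of cosets of $(N/p_i)\ZZ_N$), so the difference operator in the $p_i$-coordinate already annihilates it. Step 3 works, though it is heavier than necessary: summing $\omega^{c}\,\bbA^N[\Delta_c]$ over all base points $c$ of a fixed cuboid shape gives $A(\omega)\prod_j(1-\omega^{-d_jN/p_j})$ with each factor nonzero, which yields (ii)~$\Rightarrow$~(i) in one line without the finite-difference/Fourier detour.

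The genuine gap is in Step 1, which is where all the content of the proposition lives. Your ``fiber reduction'' claims that by subtracting one integer multiple of $x*F^N_i$ per grid line one can arrange that the remainder $A_1$ is supported on the single slab $\{x:x_i=0\}$. This is impossible: a fiber assigns \emph{equal} weight to all $p_i$ points of its line, so subtracting a multiple of it can zero out at most one point per line, leaving $A_1$ supported on $p_i-1$ slabs, not one. Consequently $A_1(X)$ is not a polynomial in $X^{p_i^{n_i}}$ and the passage to $\Phi_{N/p_i}\mid A_1^*$ collapses. (The clause ``$A_1^*\equiv 0$ when $p_i\,\|\,N$'' is also incorrect except when $N=p_i$.) The standard repair is exactly the step you are missing: after reducing to $p_i-1$ slabs, write $A_1(\omega)=\sum_{a'}c_{a'}(\eta)\zeta^{a'}$ with $\zeta$ a primitive $p_i^{n_i}$-th root, $\eta$ a primitive $(N/p_i^{n_i})$-th root, and $a'$ ranging over exponents in the first $p_i-1$ slabs; one then needs that $1,\zeta,\dots,\zeta^{\varphi(p_i^{n_i})-1}$ are linearly independent over $\ZZ[\eta]$ (equivalently, $[\QQ(\omega):\QQ(\eta)]=\varphi(p_i^{n_i})$, i.e.\ linear disjointness of cyclotomic fields of coprime conductors) to conclude $c_{a'}(\eta)=0$, and then induct on the number of prime factors. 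That arithmetic input is the heart of the de Bruijn--R\'edei--Schoenberg theorem and does not appear anywhere in your sketch; without it, Step 1 is not a proof but a restatement of the claim.
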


Proposition \ref{cuboid} can be strengthened as follows if $N$ has only two distinct prime factors. This goes back to the work of de Bruijn \cite{deB}; a self-contained proof is provided in \cite[Theorem 3.3]{LL}.

\begin{lemma}\label{structure-thm} 
Let $A\in\calm^+(\ZZ_N)$.
Assume that $\Phi_{N}|A$, where $N$ has two distinct prime factors $p_1,p_2$. Then $A$ mod $N$ is a linear combination of $N$-fibers with nonnegative weights. In other words,
$$A(X)=P_1(X)F^N_1(X)+ P_2(X)F^N_2(X) \mod X^N-1,$$
where $P_1,P_2$ are polynomials with nonnegative coefficients.

\end{lemma}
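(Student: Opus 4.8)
The plan is to prove the statement by induction on $N$, using the cuboid characterization of cyclotomic divisibility (Proposition \ref{cuboid}) together with a positivity-preserving fiber extraction step. Write $N = p_1^{a_1} p_2^{a_2}$ with $a_1, a_2 \geq 1$. The base case is $N = p_1 p_2$, where $D(N) = 1$ and the whole group $\ZZ_N$ is a single grid. Here we argue directly: the condition $\Phi_N | A$ says (via Proposition \ref{cuboid}(ii)) that every $N$-cuboid evaluates to zero on $A$, and an $N$-cuboid in this case is just a configuration of four points $c, c + d_1 N/p_1, c + d_2 N/p_2, c + d_1 N/p_1 + d_2 N/p_2$ with alternating $\pm1$ weights. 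One extracts $F_1^N$-fibers greedily: for each residue class mod $N/p_1$ (i.e. each line in the $p_1$-direction), subtract $\min$ of the weights along that fiber times $F_1^N$. What remains is a multiset $A'$ that still satisfies $\Phi_N | A'$ (since $\Phi_N | F_1^N$, as $N/p_1$ is a proper divisor and $\Phi_N(X) \mid \Phi_{p_1}(X^{N/p_1})$ — wait, more carefully, $F_1^N(X) = \frac{X^N-1}{X^{N/p_1}-1}$, which is divisible by $\Phi_N$), is still nonnegative, and has at least one vanishing weight on each $p_1$-fiber. The cuboid condition then forces $A'$ to be constant along each $p_2$-fiber (using the vanishing weight as a pivot in the $\pm1$ relation), so $A'$ is a nonnegative combination of $F_2^N$-fibers. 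This gives the base case.

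For the inductive step, suppose $N = p_1^{a_1} p_2^{a_2}$ with $a_1 + a_2 > 2$, and assume the result for all proper divisors of $N$ that have two prime factors. The key reduction is to pass to $\ZZ_{N'}$ where $N' = N/p_1$ (say $a_1 \geq 2$, WLOG). One shows that $\Phi_N | A$ implies a structural statement on the grids of $\ZZ_N$ of the form $\Lambda^N(x, D(N))$: restricting $A$ to each such grid and identifying it with a multiset in $\ZZ_{p_1 p_2}$, the cuboid condition on $N$-cuboids (which all live inside single grids) says exactly that each such restriction is annihilated by $\Phi_{p_1 p_2}$, hence — by the base case applied in $\ZZ_{p_1 p_2}$ — is a nonnegative combination of the two elementary fibers of $\ZZ_{p_1 p_2}$. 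Pulling these back to $\ZZ_N$, these elementary fibers become $F_1^N$ and $F_2^N$ fibers (scaled appropriately). The point is to organize the extraction so that, after removing a nonnegative multiple of $F_1^N$-fibers from each grid, the remainder is supported in a way that is genuinely lower-complexity — concretely, one extracts all the $F_1^N$-content grid by grid, leaving a nonnegative $A''$ with $\Phi_N | A''$ and with the property that $A''$ restricted to each grid is a combination of $F_2^N$-fibers only.

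The main obstacle, and the step I would spend the most care on, is making the grid-by-grid extraction globally consistent: after extracting $F_1^N$-fibers on each grid $\Lambda^N(x, D(N))$ separately, one needs the leftover $A''$ to still be expressible with a single nonnegative polynomial $P_2$ multiplying $F_2^N$ across all of $\ZZ_N$, not just within individual grids — and simultaneously one must collect the extracted pieces into a single nonnegative $P_1 F_1^N$. This is where the hypothesis that there are exactly two prime factors is essential (it is what makes the base-case dichotomy ``$F_1$ or $F_2$, with nonnegative coefficients'' available, rather than merely ``some integer combination of fibers'' as in Proposition \ref{cuboid}(iii) for $K \geq 3$). I expect the cleanest route is the one in \cite[Theorem 3.3]{LL}: induct on $|A| = A(1)$ rather than on $N$ — if $A \neq 0$, locate a fiber (in one of the two directions) along which all weights are positive, using a minimality/pigeonhole argument driven by the cuboid relations, subtract it, and recurse, checking at each stage that $\Phi_N$ still divides the remainder and nonnegativity is preserved. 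I would present the proof in that form, citing \cite{deB} and \cite[Theorem 3.3]{LL} for the details of the fiber-location argument, since the statement here is exactly theirs.
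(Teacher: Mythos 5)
The paper does not prove this lemma at all: it is quoted as a known result of de Bruijn, with the self-contained proof deferred to \cite[Theorem 3.3]{LL}. So any proof you give is by definition a ``different route,'' and yours is essentially a correct one, though it can be streamlined considerably. Your base case is right: for $N=p_1p_2$ the cuboid condition says exactly that $w(x_1,x_2)-w(x_1',x_2)-w(x_1,x_2')+w(x_1',x_2')=0$ for all choices, i.e.\ $w(x_1,x_2)=f(x_1)+g(x_2)$; subtracting the minimum along each $p_1$-fiber and then using the zero entries as pivots in the four-term relation does force the remainder to depend only on $x_1$, giving the nonnegative two-fiber decomposition. But your inductive step is both overbuilt and worried about a non-problem. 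Since every $N$-cuboid is contained in a single $D(N)$-grid (this is Lemma \ref{grid-split}), and rescaling a grid as in Lemma \ref{scaling} turns $\Phi_N$-divisibility of $A\cap\Lambda$ into $\Phi_{p_1p_2}$-divisibility of a multiset in $\ZZ_{p_1p_2}$, the general case follows from the base case in one step, with no induction on $N$: each $A\cap\Lambda_j$ is a nonnegative combination $P_{1,j}F^N_1+P_{2,j}F^N_2$, and you simply set $P_i=\sum_j P_{i,j}$. The ``global consistency'' obstacle you flag does not exist, because the lemma does not require $P_1$ or $P_2$ to have any structure beyond nonnegativity of coefficients, and a sum of polynomials with nonnegative coefficients again has nonnegative coefficients. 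With that simplification your argument is a complete and correct substitute for the citation; the only caution is that your closing sentence retreats to ``cite \cite{deB} and \cite[Theorem 3.3]{LL} for the details,'' which is exactly what the paper already does, so you should commit to the self-contained version if you intend it as a proof.
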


\begin{lemma}\label{grid-split}
Assume that $N|M$ and $A\in\calm(\ZZ_N)$. Let $m|D(N)$.Then $\Phi_N|A$ if and only if $\Phi_N|(A\cap\Lambda)$ for every $m$-grid $\Lambda$.
\end{lemma}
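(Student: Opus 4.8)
The plan is to reduce the divisibility $\Phi_N \mid A$ to a collection of $N$-cuboid evaluation conditions via Proposition \ref{cuboid}(ii), and then observe that each such cuboid lives inside a single $m$-grid. First I would record the elementary fact that, since $m \mid D(N)$, every $N$-cuboid $\Delta$ (as in Definition \ref{def-N-cuboids}) is supported on a single $m$-grid: indeed the support of $\Delta$ is contained in the grid $\Lambda^N(c, D(N))$, and $\Lambda^N(c,D(N)) \subset \Lambda^N(c,m)$ because $m \mid D(N)$. Thus the vertices of $\Delta$ all lie in one $m$-grid $\Lambda$, which means the weights $w_\Delta^N$ are unchanged if we replace $A$ by $A \cap \Lambda$, and consequently
\[
\bbA^N[\Delta] = (A\cap\Lambda)^N[\Delta]
\]
for that particular grid $\Lambda$ (and trivially $(A\cap\Lambda')^N[\Delta]=0$ for every other $m$-grid $\Lambda'$).

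Next, for the forward direction, suppose $\Phi_N \mid A$. Fix an $m$-grid $\Lambda$. I want $\Phi_N \mid (A\cap\Lambda)$, so by Proposition \ref{cuboid} it suffices to show $(A\cap\Lambda)^N[\Delta]=0$ for every $N$-cuboid $\Delta$. If $\Delta$ is supported in $\Lambda$, then $(A\cap\Lambda)^N[\Delta] = \bbA^N[\Delta] = 0$, again by Proposition \ref{cuboid} applied to $A$; if $\Delta$ is supported in some other $m$-grid, then $(A\cap\Lambda)^N[\Delta]=0$ because the restriction of $A\cap\Lambda$ to the support of $\Delta$ is identically zero. Either way the evaluation vanishes, so $\Phi_N \mid (A\cap\Lambda)$.

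For the converse, suppose $\Phi_N \mid (A\cap\Lambda)$ for every $m$-grid $\Lambda$. Write $A = \sum_\Lambda (A\cap\Lambda)$, the sum over the (finitely many, pairwise disjoint) $m$-grids partitioning $\ZZ_N$; this identity holds at the level of mask polynomials mod $X^N-1$. Since $\Phi_N$ divides each summand, it divides the sum, i.e. $\Phi_N \mid A$. The main thing to be careful about is the bookkeeping in the first paragraph — verifying that an $N$-cuboid really is confined to one $m$-grid and that the $\Delta$-evaluation only sees the part of $A$ on that grid — but this is a direct consequence of the geometric description of cuboids given after Definition \ref{def-N-cuboids} together with $m \mid D(N)$, so I do not anticipate a genuine obstacle. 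One could alternatively run the whole argument through characterization (iii) of Proposition \ref{cuboid}, expressing $A$ mod $N$ as a combination of $N$-fibers and noting that fibers, too, respect the $m$-grid decomposition since $m\mid D(N)$ divides $N/p_i$; but the cuboid formulation seems cleanest.
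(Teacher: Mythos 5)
Your proposal is correct and follows essentially the same route as the paper: the paper also observes that, since $m\mid D(N)$, every $N$-cuboid has all its vertices in a single $m$-grid, and then invokes the equivalence (i) $\Leftrightarrow$ (ii) of Proposition \ref{cuboid}. Your write-up simply spells out the two directions that the paper leaves implicit.
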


\begin{proof}
If $\Delta$ is an $N$-cuboid with one vertex $z\in\ZZ_N$, then all its vertices are contained in $\Lambda(x,D(N))$. Since $m|D(N)$, it follows that any $m$-grid $\Lambda$ containing any vertex of $\Delta$ must contain all of its vertices. The lemma now 
follows from the equivalence (i) $\Leftrightarrow$ (ii) in Proposition \ref{cuboid}.
\end{proof}

%%%%%%%%%%%%%%%%%%%%%%%%%%%%%

\section{Proof of Theorem \ref{main theorem}}\label{main-proof}

We are now ready to prove Theorem \ref{main theorem}. For the reader's convenience, we state it here again in the notation of Section \ref{cyclo-tools}.

\begin{proposition}\label{2primebound}
Assume that $K=2$, and write $p=p_1,q=p_2$ for short.
Let $A\in\calm^+(\ZZ_M)$, and let $m_1,m_2,\dots,m_r$ be divisors of $M$ such that $m_j=p^{\alpha_j} q^{\beta_j}$, where
$$
1\leq\alpha_1<\alpha_2<\dots <\alpha_r.
$$
Assume that $\Phi_{m_1}\Phi_{m_2}\dots \Phi_{m_r}|A$, and that $q\nmid |A|$.
Then $|A|\geq p^r$.
\end{proposition}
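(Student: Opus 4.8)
The plan is to induct on $r$, using Lemma \ref{grid-split} and the two-prime structure result (Lemma \ref{structure-thm}) to peel off one prime power at a time. Working in $\ZZ_M$ with $M = p^{n_1}q^{n_2}$, set $N_j := p^{\alpha_j}q^{\beta_j'}$ for a suitable common $q$-exponent (we may reduce mod $p^{\alpha_r}q^{\max\beta_j}$ at the outset, so assume $M$ itself is this lcm). The base case $r=1$ is exactly the Lam--Leung bound (\ref{LL-bound}): $\Phi_{m_1}\mid A$ with $m_1 = p^{\alpha_1}q^{\beta_1}$ forces $|A| \geq \min(p,q)$; combined with $q\nmid |A|$ one upgrades this to $p \mid |A|$ when $\alpha_1 \geq 1$ — indeed if $q \mid m_1$ we still get $p\mid|A|$ because the only way to avoid a factor of $p$ while having $\Phi_{m_1}\mid A$ and $q\nmid|A|$ would be a fiber decomposition in the $q$-direction only, which would make $|A|$ divisible by $q$ after summing, contradiction. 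So $|A| \geq p = p^1$.

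For the inductive step, I would look at the \emph{largest} exponent $\alpha_r$. Consider $A$ reduced modulo $N := p^{\alpha_r}q^{\beta_r}$ (the projection with induced weights (\ref{induced-weights})); it still satisfies $\Phi_N \mid (A \bmod N)$, and by Lemma \ref{structure-thm} it is a nonnegative combination of $N$-fibers in the $p$- and $q$-directions. The key point: since $q \nmid |A|$, the total contribution of $q$-direction fibers to $|A|$ must be $\equiv 0 \bmod q$... so we cannot rely on those to avoid divisibility; the structure forces genuine $p$-direction fibers. More usefully, I would pass to $p$-grids: let $m = D(N)/p^{?}$ be chosen so that the $m$-grids in $\ZZ_M$ each pick out a "slice" on which the exponent $\alpha_r$ no longer appears. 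Concretely, write $\Lambda$ for the $m$-grids with $m = p^{\alpha_r - 1}$ (times the appropriate $q$-power); by Lemma \ref{grid-split}, $\Phi_{m_j}\mid A$ for $j = 1,\dots, r$ transfers to each restriction $A \cap \Lambda$ in a controlled way — the divisors $\Phi_{m_1},\dots,\Phi_{m_{r-1}}$ (with smaller $p$-exponents) survive on each grid, while the top one $\Phi_{m_r}$ is the one "responsible" for the grid decomposition itself. Each nonempty slice then carries, by the inductive hypothesis applied in the smaller group, at least $p^{r-1}$ elements (one needs $q \nmid$ the slice cardinality, which should follow from choosing the slices compatibly with a $q$-fiber-free direction, or by a pigeonhole/averaging over grids). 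Since $\Phi_{p^{\alpha_r}}$-type divisibility across the $p$-direction forces the occupied slices to come in groups of $p$ (this is where (\ref{prime-power-cyclo}) and the fiber structure in the $p$-direction enter), we multiply: $|A| \geq p \cdot p^{r-1} = p^r$.

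The main obstacle I anticipate is the bookkeeping that keeps the hypothesis "$q \nmid |A|$" alive under restriction to grids: a priori a slice could have cardinality divisible by $q$ even though the total is not. The fix I would aim for is to choose the grid direction so that the $q$-fibers from the Lemma \ref{structure-thm} decomposition are "aligned" with the grid — i.e. each $q$-fiber lies in a single slice or is split evenly — so that the slice cardinalities inherit the non-divisibility, or else to argue that if every slice had $q \mid |A\cap\Lambda|$ then the $q$-fiber part would dominate and force $q\mid |A|$. A secondary subtlety is ensuring the intermediate exponents $\alpha_1 < \cdots < \alpha_{r-1}$ really do all restrict to nontrivial cyclotomic conditions on the slices with strictly smaller ambient $p$-power, so that the induction is on a genuinely smaller instance; this should follow from the definition of $D(N)$ and Lemma \ref{grid-split} applied with $m = p^{\alpha_r - 1} q^{\gamma}$ for appropriate $\gamma$, but the exponent arithmetic needs to be checked carefully against Definition \ref{def-N-cuboids}.
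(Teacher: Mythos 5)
Your overall strategy (induction on $r$, Lemma \ref{grid-split} to localize to grids, Lemma \ref{structure-thm} to get a nonnegative fiber decomposition) is the right toolkit, and it is the paper's toolkit too, but the way you deploy it has a gap that I do not think can be repaired in the form you propose. You peel off the \emph{largest} exponent $\alpha_r$ and split $\ZZ_M$ into $p^{\alpha_r-1}$-grids (times a $q$-power), claiming that $\Phi_{m_1},\dots,\Phi_{m_{r-1}}$ ``survive on each grid.'' They do not: Lemma \ref{grid-split} transfers $\Phi_{m_j}$-divisibility to $m$-grids only when $m\mid D(m_j)=p^{\alpha_j-1}q^{\max(0,\beta_j-1)}$, and for $j<r$ one has $\alpha_r-1\geq\alpha_j$, so $p^{\alpha_r-1}\nmid D(m_j)$. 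Concretely, a single $m_1$-fiber in the $p$ direction meets $p$ distinct $p^{\alpha_1}$-residue classes (hence $p$ distinct $p^{\alpha_r-1}$-grids) in one point each, and $\Phi_{m_1}$ certainly does not divide a monomial. The paper avoids this by inducting from the \emph{smallest} exponent: it splits into $p^{\alpha_1}$-grids, for which $p^{\alpha_1}\mid p^{\alpha_j-1}\mid D(m_j)$ holds for every $j\geq 2$, so exactly the divisors needed for the inductive hypothesis are preserved, while $\Phi_{m_1}$ itself is consumed by the structure theorem rather than passed down.

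The second unresolved point is the one you flag yourself: keeping $q\nmid|A\cap\Lambda|$ on \emph{every} slice. Pigeonholing only produces one good slice, which would yield $|A|\geq p^{r-1}$ plus leftovers, not $p^r$; you need all $p$ slices to be good in order to multiply. The paper's mechanism is a two-stage splitting: first pass to $p^{\alpha_1-1}$-grids (which preserve all $r$ divisors) and pigeonhole to find one piece $A_0$ with $q\nmid|A_0|$; then apply Lemma \ref{structure-thm} to $A_0$ at scale $m_1$, writing $A_0\equiv P_1(X^D)F^{m_1}_1+P_2(X^D)F^{m_1}_2$, so that after the finer splitting into $p^{\alpha_1}$-grids each piece $A_{0,j}$ has cardinality $P_1(1)+q\,Q_j(1)$ with $q\nmid P_1(1)$ --- every slice inherits the non-divisibility simultaneously, because the $p$-fibers contribute the same count $P_1(1)$ to each slice and the $q$-fibers contribute multiples of $q$. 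This is exactly the alignment you hoped for, but it has to be extracted from the explicit decomposition at the \emph{bottom} scale $m_1$; it is not available at the top scale. (A small additional correction: in your base case, $p\mid|A|$ need not hold --- the decomposition gives $|A|=pP_1(1)+qP_2(1)$, and $q\nmid|A|$ forces $P_1(1)\geq 1$, hence $|A|\geq p$, but the $q$-fiber term can spoil divisibility by $p$.)
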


\begin{proof}
We proceed by induction in $r$. For the base case, suppose that $r=1$. By Lemma \ref{structure-thm}, $A$ mod $m_1$ is a union of $m_1$-fibers in the $p$ and $q$ directions. Since $q\nmid |A|$, at least one of these fibers must be in the $p$ direction. Hence $|A|\geq p$. 

Suppose now that $r\geq 2$, and that the proposition is true with $r$ replaced by $r-1$. 
Let $D:=D(p^{\alpha_1})=p^{\alpha_1-1}$. 
We write $\ZZ_M$ as a disjoint union of grids $\Lambda(y_i,D)$, where $y_0:=0,y_1,\dots,y_{D-1}\in\ZZ_M$. 
Let $A_i=A\cap \Lambda(y_i,D)$.
By Lemma \ref{grid-split}, we have $\Phi_{m_1}\Phi_{m_2}\dots \Phi_{m_r}|A_i$ for each $i$. Moreover, since $q\nmid |A|$, there exists at least one $i$ such that $q\nmid |A_i|$. Without loss of generality, we may assume that $q\nmid|A_0|$, We will prove that $|A_0|\geq p^r$.

Write $m=m_1$ for short.
Applying Lemma \ref{structure-thm} to $A_0$ on the scale $m$, we see that $A_0$ mod $m$ is a linear combination of $m$-fibers in the $p$ and $q$ directions with nonnegative coefficients. Taking into account that $A_0\subset D\ZZ_M$, we see that
\begin{equation}\label{A0-decomp}
A_0(X)\equiv P_1(X^D)F^{m}_1(X)+ P_2(X^D)F^{m}_2(X) \mod X^{m}-1,
\end{equation}
where $P_1,P_2$ are polynomials with nonnegative coefficients, $F^{m}_1(X)=1+X^{m/p}+\dots +X^{(p-1)m/p}$ is an $m$-fiber in the $p=p_1$ direction,
and $F^{m}_2=1+X^{m/q}+\dots+X^{(q-1)m/q}$ is an $m$-fiber in the $q=p_2$ direction. 

Moreover, we have the following simplification. For each $a\in\ZZ_M$, we may use the Chinese Remainder Theorem to write 
\begin{equation}\label{decomp-a}
aD\equiv a_1p^{\alpha_1} + a_2m/p\mod m,
\end{equation}
 where $a_1,a_2\in\ZZ_M$. Then
\begin{align*}
X^{aD}F^m_1(X)&=X^{a_1p^{\alpha_1}}X^{a_2m/p}
\Big(1+X^{m/p}+\dots +X^{(p-1)m/p}\Big)
\\
&= X^{a_1p^{\alpha_1}}\Big(X^{a_2m/p}+X^{(a_2+1)m/p}+\dots +X^{(a_2+p-1)m/p}\Big)
\\
&\equiv X^{a_1p^{\alpha_1}}F^m_1(X) \mod X^m-1.
\end{align*}
Applying this to every monomial in $P_1(X^D)$ if necessary, we may assume that $P_1$ satisfies
$$
P_1(X^D)=P_0(X^{p^{\alpha_1}}),
$$
where $P_0(X)$ is a polynomial with nonnegative coefficients.
Since $q\nmid |A_0|$ and $|F^{m}_2|=q$, it follows that 
\begin{equation}\label{qndiv P1}
q\nmid P_1(1).
\end{equation}

We now split up $A_0$ further, as follows. 
Let $x_0:=0,x_1,\dots,x_{p-1}$ be points in $\Lambda(0, D)$ such that $(x_i-x_j,M)=M/p^{\alpha_1-1}$ for $i\neq j$.
Using the Chinese Remainder Theorem as in (\ref{decomp-a}), we write 
$\Lambda(0, D)=\bigcup_{j=0}^{p-1}\Lambda_j$, where $\Lambda_j=\Lambda(jm/p,pD)$. Let
$A_{0,j}=A_0\cap\Lambda_j$.

The geometric idea in the next step is as follows. Think of $p$ and $q$ as two directions in a plane. We may then interpret $\{\Lambda_j\}$ as a decomposition of a 2-dimensional grid $\Lambda(0,D)$ into a system of parallel lines, each perpendicular to the $p$ direction. 
Consider the decomposition (\ref{A0-decomp}). The fibers in the $p$ direction are ``orthogonal'' to the parallel lines, so each such fiber contributes one point to each line. The fibers in the $q$ direction are parallel to $\Lambda_j$, hence any translated copy of $F^{m}_2$ is either contained fully in $\Lambda_j$ or disjoint from it.

We now write this out more explicitly. For the first part of (\ref{A0-decomp}), we have
$$
P_1(X^D)F^{m}_1(X)= P_0(X^{p^{\alpha_1}}) \Big(1+X^{m/p}+\dots +X^{(p-1)m/p}\Big),
$$
and for any monomial $cX^{ap^{\alpha_1}}$ appearing in $P_0$, we have 
$ap^{\alpha_1}+ jm/p \in \Lambda_j$ for $j=0,1,\dots,p-1$. For the second part, we consider the multiset associated to $P_2(X^D)$, decompose it into disjoint multisets in $\calm(\Lambda_j)$ for $j\in\{0,1,\dots,p-1\}$, and use that $F^m_2\subset \Lambda_0$. Hence for each $0 \leq j \leq p - 1$, there are polynomials $Q_j$ such that
$$
A_{0,j}(X)=P_1(X) X^{x_j} + Q_j (X) F^{m}_2(X) \mod X^{m}-1.
$$
This decomposition is analogous to that of $A_0$ in \eqref{A0-decomp}.
Observe that
$$
|A_{0,j}|=A_{0,j}(1)=P_1(1) + Q_j(1)F^m_2(1)= P_1(1) + Q_j(1)q.
$$
By (\ref{qndiv P1}), $|A_{0,j}|$ is not divisible by $q$.
On the other hand, by another application of Lemma \ref{grid-split}, we have $\Phi_{m_2}\dots \Phi_{m_r}|A_{0,j}$ for each $j$. 

Applying the inductive assumption to $A_{0,j}$ for each $j$, we see that $|A_{0,j}|\geq p^{r-1}$. Hence
$$
|A_0|=\sum_{j=0}^{p-1}|A_{0,j}|\geq p^r,
$$
as claimed.
\end{proof}

%%%%%%%%%%%%%%%%%%%%%%

\section{Examples and discussion}\label{sec-examples}

We discuss briefly a few motivating examples and the possibility of extending the results here to more general product sets. A minor inconvenience is that, in the Favard length setting, the set $S_A$ defined in (\ref{def-SA}) depends on $L$, hence on both of the sets $A$ and $B$ in (\ref{def-S}), and not just on $A$. Therefore, for the purpose of this discussion, consider the set
$$
S'_A:= \{s:\Phi_s|A\text{ and }(s,|A|)=1\}.
$$
Thus $S_A\subseteq S'_A$, with equality if $|A|$ and $|B|$ have the same prime factors. In the Favard length examples throughout the rest of this section, we will assume that $A=B$, so that $S_A=S'_A$.

\subsection{Cluster splitting is necessary.}\label{example1}
It is not always advantageous to group divisors in large clusters.  Consider the following example. Let $N=p^{10}q^{10}$, where $p,q$ are distinct primes. Let
\begin{align*}
A(X)&=\frac{(1-X^{N})(  1-X^{N/p^9} )}{(1-X^{N/p})(1-X^{N/p^{10}})}
+\frac{(1-X^{N})(  1-X^{N/q^9}  )}{(1-X^{N/q})(1-X^{N/q^{10}})}
\\
&= \Big(1+X^{N/p}+\dots +X^{(p-1)N/p}\Big)  \Big(1+X^{N/p^{10}}+\dots +X^{(p-1)N/p^{10}}\Big) 
\\
&+ \Big(1+X^{N/q}+\dots +X^{(q-1)N/q}\Big)  \Big(1+X^{N/q^{10}}+\dots +X^{(q-1)N/q^{10}}\Big) .
\end{align*}
Then $\Phi_{s_1}\Phi_{s_2}|A(X)$ for $s_1=N=p^{10}q^{10}$ and $s_2=pq$, whereas $|A|=p^2+q^2$.

Suppose we try to apply the construction of Section \ref{oneclustergamma} (Lemmas \ref{q-lemma1} and \ref{q-lemma2}) to the cluster $\mathcal{C} : = \{s_1, s_2 \}$.  To do this, we need to choose an appropriate $Q|N$ satisfying (\ref{q-e1}). Since $s_2=pq$ cannot divide $Q$, we must choose $Q$ to be a power of only one of the primes. Let us be as generous as we can in the circumstances, and choose $Q =q^{10}$, where $q$ is the larger prime. But then $(s_1,Q)=q^{10}$, so that $t_1 =p^{10}$. In order for that to be less than $|A|$, we would need $p^{10}<p^2+q^2$, which is false if $p$ and $q$ are of about the same size. For the same reason, the set $A$ does not satisfy the assumptions of Proposition \ref{single prime}. It also provides a counterexample to Conjecture 4.6 in \cite{L1}.

However,  the construction in Section \ref{manyclusters} is more efficient.  Consider the following cluster division. Let $\mathcal{C}^1 : = \{p^{10} q^{10} \}$ and $\mathcal{C}^2 = \{ pq \}$. We then choose $Q_1 = p^{9} q^{10}$ and $Q_2 = q$, with $T_1 = T_2 = p^{-1}$.  Let $\Gamma^1$ and $\Gamma^2$ be the corresponding single-cluster SLV sets constructed in Section \ref{oneclustergamma}. By Lemma \ref{q-lemma3}, there exist translation parameters $z_1, z_2 \in [0,1]$ so that:
$$
\bigg| [0,1] \cap \big( \Gamma^1+ z_1 \big) \cap \big( \Gamma^2  + z_2 \big) \bigg| \geq \frac{1}{p^2} > \frac{1}{p^2 + q^2} = \frac{1}{|A|}.
$$
This shows that dividing $S_A$ into optimal clusters is a key component of obtaining sets $\Gamma_A$ whose size compares favourably with $|A|$. 

\subsection{Explicit examples with two prime factors}
The set $A$ in Section \ref{example1} provides an explicit example of a set satisfying the two-prime assumption of Theorems \ref{main theorem} and \ref{Favard-two-primes}. Other examples can be constructed in a similar way. For instance, let $1\leq\alpha_1<\alpha_2<\dots <\alpha_k$, and let $N=p^\alpha q^\beta$, where $p,q$ are distinct primes and $\alpha\geq\alpha_k$. 
Then the ``long fiber" $\mathcal{F}$ with the mask polynomial
$$
\mathcal{F}(X)=\prod_{j=1}^k \Phi_p(X^{p^{\alpha_k-1}q^\beta})
$$
is divisible by all $\Phi_s(X)$ such that the exponent of $p$ in the prime factorization of $s$ is $\alpha_j$ for some $j\in\{1,\dots,k\}$. Translates of such fibers in both directions can be added to construct more complicated examples.

\subsection{One scale, many primes}\label{571113}
In the example in Section \ref{example1}, we used two well separated scales ($p^{10}q^{10}$ and $pq$). If we allow $s_A$ to have 4 or more distinct prime factors, then $A$ may violate the assumptions of Proposition \ref{single prime} in other ways. 

The following example is due to Matthew Bond and the first author (unpublished).
Let $p_1,p_2,q_1,q_2$ be distinct primes. 
Assume that 
$p_1+q_1=p_2+q_2$, and, letting $N=p_1+q_1$, that $N$ is smaller than the product of any two distinct primes chosen from $\{p_1,p_2,q_1,q_2\}$. 
(For example, we could choose $p_1=5$, $p_2=7$, $q_1=13$, $q_2=11$, with $N=18$.) Let also $s_1=p_1q_1$ and $s_2=p_2q_2$.
Let $A$ be a set of $N$ integers such that 
\begin{itemize}
\item $A$ mod $s_1$ is a union of two $s_1$-fibers, one in each direction, of cardinalities $p_1$ and $q_1$,
\item $A$ mod $s_2$ is a union of two $s_2$-fibers, one in each direction, of cardinalities $p_2$ and $q_2$.
\end{itemize}
This is easily produced via the Chinese Remainder Theorem. 
Then $\Phi_{s_1}(X)$ and $\Phi_{s_2}(X)$ divide $A(x)$. It follows that $p_1p_2q_1q_2$ divides $s_A$. We further note that none of $\Phi_{p_1},\Phi_{p_2},\Phi_{q_1},\Phi_{q_2}$ divide $A(X)$.

Suppose that we have a factorization $s_A=s_{1,A}s_{2,A}$ satisfying the conditions of Proposition \ref{single prime}. If $s_{2,A}$ is divisible by at least two primes from  $\{p_1,p_2,q_1,q_2\}$, then $s_{2,A}>N=|A|$, violating the first condition of the proposition. Therefore at most one of our four primes may divide $s_{2,A}$. Then, however, at least three of them must divide $s_{1,A}$. It follows that $s_{1,A}$ is divisible by at least one of $s_1$ and $s_2$, violating the second condition. 

Nonetheless, it turns out that the construction in Section \ref{oneclustergamma} with the single cluster $\mathcal{C} = S_A = \{s_1, s_2 \}$ is sufficient in this case. Indeed, let $Q=q_1q_2$, so that $t_1=p_1$ and $t_2=p_2$. Then $T=\max(p_1,p_2)<|A|$, and Lemmas \ref{q-lemma1} and \ref{q-lemma2} provide the requisite SLV set.

\subsection{Lower bounds with more prime factors}\label{cuboid-lower-bound}

Our current methods are not sufficient to extend Theorem \ref{main theorem} to the case when more than two distinct prime factors are allowed. Below, we indicate a single-step cuboid argument leading to a lower bound on $|A|$ in certain situations. This is enough to resolve simple examples such as those below. However, there does not seem to be any easy way to iterate the argument to allow more complicated configurations of cyclotomic divisors. Our proof of Theorem \ref{main theorem} fails at multiple points in this setting.

\begin{lemma}\label{multi-step1}
Let $A\in\calm(\ZZ_M)$, where $M=\prod_{i=1}^K p_i^{n_i}$ and $p_1,\dots,p_K$ are distinct primes. 
Suppose that $m|(M/p_i)$ for some $i\in\{1,\dots,K\}$. 
Assume further that $\Phi_{mp_i}|A$. Then $p_i|\bbA^m[\Delta]$ for any $m$-cuboid $\Delta$.
\end{lemma}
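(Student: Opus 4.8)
The plan is to work at the single scale $N := m p_i$ and then transfer the information down to scale $m$. First note that $m \mid (M/p_i)$ forces $N = m p_i \mid M$, so $\Phi_{m p_i} = \Phi_N$ is a genuine cyclotomic factor of $A$ and Proposition~\ref{cuboid} applies to $A \bmod N$. Since $p_i \mid N$, part~(iii) of that proposition yields a fiber decomposition
\[
A(X) \equiv \sum_{l\,:\, p_l \mid N} P_l(X)\, F^N_l(X) \pmod{X^N - 1},
\]
with each $P_l(X) \in \ZZ[X]$.

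Next I would reduce this identity modulo $X^m - 1$, which is legitimate because $m \mid N$. The fiber in the $p_i$-direction collapses: since $N/p_i = m$, we have $F^N_i(X) = 1 + X^m + \dots + X^{(p_i-1)m} \equiv p_i \pmod{X^m - 1}$. For every other prime $p_l \mid N$ (necessarily $p_l \mid m$), a short exponent-bookkeeping argument — writing $N/p_l = p_i(m/p_l)$ and using $(p_i,p_l) = 1$ to see that the residues $k\,N/p_l \bmod m$ are a permutation of the residues $k'\,m/p_l$ — gives $F^N_l(X) \equiv F^m_l(X) \pmod{X^m - 1}$. Hence
\[
A(X) \equiv p_i\, P_i(X) + \sum_{l \ne i,\ p_l \mid m} P_l(X)\, F^m_l(X) \pmod{X^m - 1}.
\]

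Finally, I would use that $\bbA^m[\Delta] = \sum_{x \in \ZZ_m} w_A^m(x)\, w_\Delta^m(x)$ depends $\ZZ$-linearly on the reduction $A \bmod (X^m - 1)$ alone. Writing $P_l(X) = \sum_a \gamma^{(l)}_a X^a$, each summand $X^a F^m_l(X) \bmod (X^m - 1)$ is a translated $m$-fiber in the $p_l$-direction and is divisible by $\Phi_m$ (because $e^{2\pi i/m}$ is a root of $F^m_l$), so by Proposition~\ref{cuboid}(ii) its $\Delta$-evaluation against any $m$-cuboid $\Delta$ vanishes; alternatively one sees this directly by pairing up the vertices of $\Delta$ that differ by a multiple of $m/p_l$, which carry opposite signs and are either both in or both out of the fiber. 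Consequently only the term $p_i P_i(X)$ contributes, giving $\bbA^m[\Delta] = p_i\cdot(\text{an integer})$, which is exactly the assertion.

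I do not expect a serious obstacle; the content is just that $\Phi_{m p_i}\mid A$ imposes the $p_i$-direction fiber structure at scale $m p_i$, and this structure contributes a clean factor of $p_i$ once one averages down to scale $m$. The only points needing a little care are the verification $N = m p_i \mid M$ (immediate), the exponent reindexing showing $F^N_l \equiv F^m_l \pmod{X^m-1}$ for $l \ne i$, and the vanishing of $m$-fiber evaluations against $m$-cuboids; one should also observe that the argument is uniform regardless of whether $p_i \mid m$.
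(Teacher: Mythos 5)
Your proof is correct, but it takes a genuinely different route from the paper's. The paper works on the \emph{cuboid} side: it lifts the given $m$-cuboid $\Delta$ to weighted multisets $\Delta_\nu$ in $\ZZ_N$ ($N=mp_i$) supported on the $p_i$ translates $\nu N/p_i$, verifies $\bbA^m[\Delta]=\sum_{\nu=0}^{p_i-1}\bbA^N[\Delta_\nu]$, and then uses the equivalence (i)\,$\Leftrightarrow$\,(ii) of Proposition~\ref{cuboid} to show that the differences $\Delta_\nu-\Delta_{\nu'}$ (or of their faces, when $p_i\mid m$) are $N$-cuboids, so all $p_i$ summands are equal. You instead work on the \emph{multiset} side: you invoke the fiber decomposition (i)\,$\Rightarrow$\,(iii) at scale $N$, reduce modulo $X^m-1$ so that the $p_i$-direction fiber collapses to the scalar $p_i$ while the other fibers descend to $m$-fibers, and then kill the latter against any $m$-cuboid. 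All the steps you flag check out, including the reindexing $F^N_l\equiv F^m_l \pmod{X^m-1}$ for $l\neq i$ and the uniformity in whether $p_i\mid m$ (your argument genuinely does not need the case split that the paper makes). The trade-off: your argument is shorter and more transparent for this single-step statement, but it leans on the Bruijn--R\'edei--Schoenberg structure theorem (part (iii)), which is the deeper half of Proposition~\ref{cuboid}, whereas the paper uses only the elementary cuboid test (ii); moreover, the paper's translate-and-telescope template is the one that iterates in the proof of Proposition~\ref{multi-prop}, where after restricting to grids and rescaling one again faces a cuboid evaluation rather than a controlled fiber decomposition, so the authors' choice is dictated by reusability rather than by any defect in your approach.
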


\begin{proof}
We consider $A$ as a multiset in $\ZZ_N$, where $N=mp_i$. We define a family of weighted multisets $\Delta\in\calm(\ZZ_N)$, as follows. Let $\jfrak=\{j\in\{1,\dots,K\}:\ p_j|m\}$. If $p_i\nmid m$, we consider $\Delta$ of the form
\begin{equation}\label{multi-e4}
\Delta(X)=X^c\prod_{j\in\jfrak} (X^{d_j}-1),
\end{equation}
where $c\in\ZZ_N$ and $(d_j,N)=N/p_j$. If $p_i|m$, we instead consider
\begin{equation}\label{multi-e5}
\Delta(X)=X^c (X^{d_i}-1) \prod_{j\in\jfrak, j\neq i} (X^{d_j}-1),
\end{equation}
with $c,d_j$ as above for $j\neq i$, and with $(d_i,N)=N/p_i^2$. In both cases, the induced multiset $\Delta$ in $\ZZ_m$ is an $m$-cuboid, and any $m$-cuboid can be (non-uniquely) represented in this manner.

For $\nu=0,1,\dots,p_i-1$, define
$
\Delta_\nu (X)=X^{\nu N/p_i}\Delta(X).
$
We claim that 
\begin{equation}\label{combined-cuboids}
\bbA^m[\Delta]=\sum_{\nu=0}^{p_i-1} \bbA^N[\Delta_\nu].
\end{equation}
Indeed, we have
\begin{align*}
\sum_{\nu=0}^{p_i-1} \bbA^N[\Delta_\nu]
&= \sum_{\nu=0}^{p_i-1} \bbA^N[{\nu N/p_i}*\Delta_\nu]
\\
&= \sum_{\nu=0}^{p_i-1} \sum_{x\in\ZZ_N} w_A^N(x-{\nu N/p_i})w^N_\Delta(x)
\\
&= \sum_{x\in\ZZ_m} w_A^m(x)w^m_\Delta(x) = \bbA^m[\Delta],
\end{align*}
as claimed.

Assume first that $p_i\nmid m$. Then $\Delta_\nu(X)-\Delta_{\nu'}(X)$ are $N$-cuboids for $\nu\neq \nu'$. Since $\Phi_N|A$, it follows from Proposition \ref{cuboid} that the corresponding cuboid evaluations are 0, hence $\bbA^N[\Delta_\nu]$, $\nu=0,1,\dots,p_i-1$, are all equal. Thus $\bbA^m[\Delta]=p_i \bbA^N[\Delta_\nu]$
for any $\nu$.

If $p_i|m$, the argument is only slightly more complicated. For each $\nu$, we write $\Delta_\nu=\Delta^+_\nu-\Delta^-_\nu$, where
$$
\Delta^+_\nu(X)=X^{c+d_i+\nu N/p_i} \prod_{j\in\jfrak, j\neq i} (X^{d_j}-1),
\ \ 
\Delta^-_\nu(X)=X^{c+\nu N/p_i}  \prod_{j\in\jfrak, j\neq i} (X^{d_j}-1).
$$
Then $\Delta^+_\nu(X)-\Delta^+_{\nu'}(X)$ and $\Delta^-_\nu(X)-\Delta^-_{\nu'}(X)$ are $N$-cuboids for $\nu\neq \nu'$. Applying Proposition \ref{cuboid} as above, we see that $\bbA^N[\Delta^+_\nu]$, $\nu=0,1,\dots,p_i-1$, are all equal, and similarly for $\bbA^N[\Delta^-_\nu]$. Using this together with (\ref{combined-cuboids}), we get 
$$
\bbA^m[\Delta]=p_i \left(\bbA^N[\Delta^+_\nu]- \bbA^N[\Delta^-_\nu]\right)
$$
for any $\nu$, proving the lemma in this case.
\end{proof}

\begin{lemma}\label{scaling}
Let $M=\prod_{i=1}^K p_i^{n_i}$, where $p_1,\dots,p_K$ are distinct primes. 
Let $\Lambda:= \Lambda(c,p_i^\beta)$ for some $c\in\ZZ_M$, $1\leq i\leq K$, and $0<\beta<n_i$.
Suppose that $A\in\calm^+(\ZZ_M)$ is supported in $\Lambda$, in the sense that $w_A(x)=0$ for all $x\not\in\Lambda$. Let $M'=M/p_i^\beta$, and observe that the mapping
$$
\ZZ_{M'}\ni x \to c+ p_i^{\beta}x \in \Lambda
$$ is one-to-one. 
Define the ``rescaled'' multiset $A' \in\calm(\ZZ_{M'})$ by
\begin{equation}\label{e-scale}
w^{M'}_{A'}(x)=w_A(c+ p_i^{\beta}x).
\end{equation}
Then for any $m$ such that $p_i^\beta | m |M$, we have
$$
\Phi_m|A \ \ \Leftrightarrow \ \ \Phi_{m/p^\beta}|A'.
$$
\end{lemma}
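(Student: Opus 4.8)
The plan is to reduce the cyclotomic divisibility on $\ZZ_M$ to one on $\ZZ_{M'}$ via the cuboid criterion of Proposition \ref{cuboid}, using the fact that $A$ is supported on the grid $\Lambda = \Lambda(c, p_i^\beta)$ and therefore behaves exactly like its rescaled copy $A'$ when tested against cuboids at scale $m$. The key observation is that an $m$-cuboid $\Delta$ in $\ZZ_m$ (with $p_i^\beta \mid m$) has all its vertices inside a single translate of $\Lambda(\,\cdot\,, D(m))$, and since $p_i^\beta \mid D(m)$ — because $\Phi_m$ being tested means $p_i^{\,\gamma_i}$ with $\gamma_i = (m\text{-exponent of }p_i)-1 \geq \beta$, as $p_i^\beta\mid m$ forces that exponent to be $\geq \beta$, hence $\gamma_i \geq \beta - 1$; one has to be a little careful here and note that if the exponent of $p_i$ in $m$ equals exactly $\beta$ then $D(m)$ carries only $p_i^{\beta-1}$, so the vertices of a $D(m)$-grid are only guaranteed to lie in a $p_i^{\beta-1}$-grid, not a $p_i^\beta$-grid. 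The cleaner route is to work directly with the mask polynomials and the definition of $\bbA^m[\Delta]$.

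First I would set up the bijection $\iota: \ZZ_{M'} \to \Lambda$, $\iota(x) = c + p_i^\beta x$, and note $w_A^{M}(\iota(x)) = w_{A'}^{M'}(x)$ by \eqref{e-scale}, while $w_A^M$ vanishes off $\Lambda$. Next, for $p_i^\beta \mid m \mid M$ write $m = p_i^\beta m''$ with $m'' = m/p_i^\beta \mid M'$; I would show that pushing forward induced weights commutes with $\iota$ in the sense that $w_{A'}^{m''}(x) = w_A^{m}(c + p_i^\beta x)$ for $x \in \ZZ_{m''}$, which follows by grouping the sum in \eqref{induced-weights} and using $A \subset \Lambda$. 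Then I would check that $\iota$ induces a bijection between $m''$-cuboids $\Delta'$ in $\ZZ_{m''}$ and $m$-cuboids $\Delta$ in $\ZZ_m$ whose vertices all lie in the grid $\Lambda(c, p_i^\beta) \bmod m$: given $\Delta'(X) = X^{c'}\prod_{j: p_j \mid m''}(1 - X^{d_j m''/p_j})$, the corresponding $\Delta$ is $\Delta(X) = X^{c + p_i^\beta c'}\prod_{j}(1 - X^{d_j m/p_j})$ after matching up the prime factors (note $p_i \mid m$ always when $\beta \geq 1$, and the factor in the $p_i$ direction rescales correctly since $m/p_i = p_i^{\beta-1} m''$ when $p_i \nmid m''$, or $m/p_i = p_i^\beta (m''/p_i)$ when $p_i \mid m''$). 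Under this correspondence, $\bbA^m[\Delta] = \bbA^{m''}[\Delta']$ by the weight identity above and the fact that cuboids supported off $\Lambda$ contribute nothing to $\bbA^m$ against an $A$ living on $\Lambda$.

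The main obstacle — and the step that needs the most care — is the converse direction of the cuboid bookkeeping: to conclude $\Phi_m \mid A$ from $\Phi_{m''} \mid A'$, I must show that \emph{every} $m$-cuboid $\Delta$ (not just those aligned with $\Lambda$) has $\bbA^m[\Delta] = 0$. For an arbitrary $m$-cuboid $\Delta$, its vertex set meets the grid $\Lambda(c, p_i^\beta)$ in a sub-box only when the ``offsets'' $d_j$ in the $p_i$ direction are compatible; when $\Delta$ is not aligned, one uses that $A$ is supported on $\Lambda$ so that only the vertices of $\Delta$ landing in $\Lambda$ contribute, and these form (the image under $\iota$ of) a genuine $m''$-cuboid or a $\pm$ combination of translated $m''$-cuboids, exactly as in the telescoping arguments of Lemma \ref{multi-step1}. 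So $\bbA^m[\Delta]$ is a signed sum of terms $\bbA^{m''}[\Delta'_\nu]$, each zero by Proposition \ref{cuboid} applied to $A'$. Running Proposition \ref{cuboid} in both directions then gives the equivalence. I expect the only genuinely fiddly point to be tracking how the $p_i$-direction factor of a cuboid interacts with the rescaling when $p_i \mid m''$ versus $p_i \nmid m''$, and isolating the vertices of $\Delta$ that survive intersection with $\Lambda$; everything else is routine translation between mask polynomials and weighted multisets.
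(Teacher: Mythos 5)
Your proposal is correct and takes the same route as the paper, which disposes of the lemma in a single sentence: the rescaling maps $m$-cuboids supported in $\Lambda$ to $(m/p_i^\beta)$-cuboids in $\ZZ_{M'}$, so the equivalence follows from the cuboid criterion of Proposition \ref{cuboid}. The details you flag as needing care — non-aligned cuboids in the converse direction, and the case where the exponent of $p_i$ in $m$ equals exactly $\beta$, so that only a single face of an $m$-cuboid can meet $\Lambda$ and one must argue via signed sums of faces as in Lemma \ref{multi-step1} — are genuine points the paper leaves to the reader, and your treatment of them is sound.
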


\begin{proof}
Since the scaling in (\ref{e-scale}) maps $m$-cuboids supported in $\Lambda$ to $(m/p^\beta)$-cuboids in $\ZZ_{M'}$, the lemma follows from Proposition (\ref{cuboid}). 
\end{proof}

\begin{proposition}\label{multi-prop}
Let $A\in\calm(\ZZ_M)$, where $M=\prod_{i=1}^K p_i^{n_i}$ and $p_1,\dots,p_K$ are distinct primes. 
Suppose that there exist $i\in\{1,\dots,K\}$, $m_0|(M/p_i^{n_i})$, and $1\leq \alpha_1<\alpha_2<\dots<\alpha_\ell\leq n_i$ such that 
\begin{equation}\label{multi-e6}
\Phi_{m_1}\dots \Phi_{m_\ell}|A,\ \ \hbox{where }
m_j=m_0p_i^{\alpha_j},\ j=1,\dots,\ell.
\end{equation}
Then $p_i^\ell\mid \bbA^{m_0}[\Delta]$ for any $m_0$-cuboid $\Delta$.
\end{proposition}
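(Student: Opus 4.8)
The plan is to argue by induction on $\ell$, the case $\ell=0$ being trivial (then $p_i^0=1$). For the inductive step I would first reduce to the case $\alpha_1=1$. Since $D(m_j)=D(m_0)\,p_i^{\alpha_j-1}$ and $\alpha_j\ge\alpha_1$, we have $p_i^{\alpha_1-1}\mid D(m_j)$ for every $j$, so Lemma~\ref{grid-split} lets me split $A$ along the $p_i^{\alpha_1-1}$-grids in $\ZZ_M$, keeping $\Phi_{m_1}\cdots\Phi_{m_\ell}$ as a divisor of the restriction to each such grid, while $\bbA^{m_0}[\Delta]$ is the sum over these grids of the corresponding restricted $\Delta$-evaluations. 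Rescaling each piece by $p_i^{\alpha_1-1}$ as in Lemma~\ref{scaling} (whose proof works verbatim for multisets with signs, since it uses only that the rescaling carries cuboids to cuboids and identifies the associated $\Delta$-evaluations) lands me in $\ZZ_{M/p_i^{\alpha_1-1}}$ with a multiset divisible by $\Phi_{m_0 p_i}\,\Phi_{m_0 p_i^{\alpha_2-\alpha_1+1}}\cdots\Phi_{m_0 p_i^{\alpha_\ell-\alpha_1+1}}$, whose exponents on $p_i$ are $1<\alpha_2-\alpha_1+1<\cdots<\alpha_\ell-\alpha_1+1$. Hence it suffices to treat the case $\alpha_1=1$.

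So assume $\Phi_{m_0 p_i}\,\Phi_{m_0 p_i^{\alpha_2}}\cdots\Phi_{m_0 p_i^{\alpha_\ell}}\mid A$ with $1<\alpha_2<\cdots<\alpha_\ell\le n_i$. Now split $A$ along the $p_i$-grids $\Lambda^M(r,p_i)$, $r=0,\dots,p_i-1$, setting $A_r:=A\cap\Lambda^M(r,p_i)$ and writing $\bbA_r^{m_0}[\Delta]$ for the $\Delta$-evaluation of $A_r$, so that $\bbA^{m_0}[\Delta]=\sum_{r=0}^{p_i-1}\bbA_r^{m_0}[\Delta]$. Because $\alpha_j\ge2$ for $j\ge2$, we have $p_i\mid D(m_0 p_i^{\alpha_j})$, so Lemma~\ref{grid-split} gives $\Phi_{m_0 p_i^{\alpha_j}}\mid A_r$ for $j=2,\dots,\ell$. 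These are $\ell-1$ cyclotomic factors with pairwise distinct exponents on $p_i$, so the induction hypothesis applied to $A_r$ (in $\ZZ_M$, with the same $m_0$) yields $p_i^{\ell-1}\mid\bbA_r^{m_0}[\Delta]$ for every $r$.

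The crucial point that remains is that $\bbA_r^{m_0}[\Delta]$ is independent of $r$; granting this, $\bbA^{m_0}[\Delta]=p_i\,\bbA_0^{m_0}[\Delta]$ is divisible by $p_i\cdot p_i^{\ell-1}=p_i^{\ell}$, which is the claim. To prove the $r$-independence I would use the one divisor not yet exploited, namely $\Phi_{m_0 p_i}\mid A$: by Proposition~\ref{cuboid}(iii) at scale $N=m_0 p_i$ we may write $A\equiv P_i(X)F^N_i(X)+\sum_{k\ne i,\ p_k\mid m_0}P_k(X)F^N_k(X)\pmod{X^N-1}$ with $P_k\in\ZZ[X]$. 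Each translate of the $p_i$-direction fiber $F^N_i$ meets each grid $\Lambda^N(r,p_i)$ in exactly one point, so the part of $A_r\bmod m_0$ coming from $P_i(X)F^N_i(X)$ equals $P_i(X)\bmod(X^{m_0}-1)$, the same for every $r$; on the other hand each $F^N_k$ with $k\ne i$ lies inside a single $p_i$-coset and reduces mod $m_0$ to $F^{m_0}_k$, which is divisible by $\Phi_{m_0}$, so by Proposition~\ref{cuboid} every translate of it pairs to $0$ with any $m_0$-cuboid $\Delta$. Therefore $\bbA_r^{m_0}[\Delta]$ equals the $\Delta$-evaluation of the fixed polynomial $P_i(X)\bmod(X^{m_0}-1)$, independently of $r$.

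I expect the $r$-independence to be the main obstacle: one must extract the correct structural statement from $\Phi_{m_0 p_i}\mid A$ (the $N$-fiber decomposition of Proposition~\ref{cuboid}) and then check carefully how the two types of fibers behave under restriction to a $p_i$-grid and under the $m_0$-cuboid pairing --- the fibers in the directions dividing $m_0$ vanishing because they remain divisible by $\Phi_{m_0}$, and the single $p_i$-fiber depositing exactly one point on each $p_i$-grid so that the residual contribution is identical on every grid. By comparison, the reduction to $\alpha_1=1$ and the remaining bookkeeping with Lemmas~\ref{grid-split} and~\ref{scaling} are routine.
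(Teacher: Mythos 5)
Your proof is correct. The inductive skeleton matches the paper's: peel off the divisor $\Phi_{m_1}$ with the smallest $p_i$-exponent to gain one factor of $p_i$, then split along grids, rescale via Lemma \ref{scaling}, and apply the hypothesis for $\ell-1$. Where you diverge is in the mechanism for the central equality. The paper stays entirely within the cuboid calculus: it lifts the $m_0$-cuboid to scale $m=m_1/p_i$ (thereby absorbing a general $\alpha_1$ without your preliminary normalization to $\alpha_1=1$), writes $\bbA^{m}[\Delta]=\sum_{\nu}\bbA^{N}[\Delta_\nu]$ with $\Delta_\nu(X)=X^{c+\nu N/p_i}\prod_{j\neq i}(X^{d_j}-1)$, and notes that $\Delta_\nu-\Delta_{\nu'}$ is an $N$-cuboid, so Proposition \ref{cuboid}(ii) forces all $p_i$ summands to be equal. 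You instead deduce the equality of the restricted evaluations $\bbA_r^{m_0}[\Delta]$ from the fiber decomposition of Proposition \ref{cuboid}(iii) (de Bruijn--R\'edei--Schoenberg) at scale $m_0p_i$: the fibers in the directions of the primes dividing $m_0$ remain divisible by $\Phi_{m_0}$ after restriction and reduction, hence annihilate every $m_0$-cuboid, while each translate of the $p_i$-direction fiber deposits the same residue mod $m_0$ on every class mod $p_i$. Both are legitimate uses of Proposition \ref{cuboid}; the paper's version is slightly leaner (it never invokes part (iii)), while yours makes the geometric source of the $r$-independence explicit. The auxiliary points you flag --- that Lemma \ref{scaling} applies to signed multisets, that the affine rescaling is a bijection on $m_0$-cuboids because $(p_i,m_0)=1$, and that $p_i\mid D(m_j)$ for $j\ge 2$ so Lemma \ref{grid-split} preserves the remaining divisors on each $p_i$-grid --- all check out.
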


\begin{proof}
The proof is by induction in $\ell$. In order to streamline the proof, we note that the statement of the proposition holds trivially for $\ell=0$, with no cyclotomic divisors assumed and the trivial conclusion $1=p_i^0| \bbA^{m_0}[\Delta]$. We will use this as the base case.

Assume now that $\ell\geq1$, and that the proposition is true in any cyclic group $\ZZ_{M'}$ with $\ell$ replaced by $\ell-1$.
The inductive step is similar to the proof of Lemma \ref{multi-step1}.
 Assume that (\ref{multi-e6}) holds.
Let $m=m_1/p_i$ and $N=m_1=mp_i$.
Define 
$$\jfrak:=\{j\in\{1,\dots,K\}:\ p_j|m_0\}.$$
For any $m_0$-cuboid $\Delta_0$, we may 
write $\bbA^{m_0}[\Delta_0]$ as a linear combination of expressions of the form $\bbA^{m}[\Delta]$, where
\begin{equation}\label{multi-e8}
\Delta(X)=X^c\prod_{j\in\jfrak} (X^{d_j}-1),
\end{equation}
with $c\in\ZZ_N$ and $(d_j,N)=N/p_j$ for each $j$. (The details are left to the interested reader, but we write out explicitly a very similar decomposition in (\ref{combined-cuboids}) in the proof of Lemma \ref{multi-step1}.)
We will prove that $p_i^\ell\mid \bbA^{m}[\Delta]$ for each such $\Delta$.

Define
$$
\Delta_\nu(X)=X^{c+\nu N/p_i}  \prod_{j\in\jfrak, j\neq i} (X^{d_j}-1)
$$
for $\nu=0,1,\dots,p_i-1$. Then
$$
\bbA^m[\Delta]= 
\sum_{\nu=0}^{p_i-1} \bbA^N[\Delta_\nu].
$$
Since $\Delta_\nu(X)-\Delta_{\nu'}(X)$ are $N$-cuboids for $\nu\neq \nu'$, Proposition \ref{cuboid} implies that $\bbA^N[\Delta_\nu]$ with $\nu=0,1,\dots,p_i-1$ are all equal. Thus
\begin{equation}\label{multi-e9}
\bbA^m[\Delta]= 
p_i \bbA^N[\Delta_\nu] \hbox{ for any }\nu.
\end{equation}
For each $\nu=0,1,\dots,p_i-1$, let $A_\nu:= A\cap\Lambda(c+\nu N/p_i, p_i^{\alpha_1})$. Let also $M':=M/p_i^{\alpha_1}$, and 
define the rescaled multisets $A'_\nu \in\calm(\ZZ_{M'})$ as in Lemma \ref{scaling}:
$$
w^{M'}_{A'_\nu}(x)=w_A(c+\nu N/p_i + p_i^{\alpha_1}x).
$$
By Lemmas \ref{grid-split} and \ref{scaling}, we have
$\Phi_{m'_2}\dots\Phi_{m'_\ell}|A'_\nu$ for each $\nu$, where 
$m'_j=m_0p_i^{\alpha_j-\alpha_1}$. Furthermore, let $\Delta'_\nu$ be the rescaling of $\Delta_\nu$, then $\Delta'_\nu$ is an $m_0$-cuboid in $\ZZ_{M'}$, and
 (with the obvious notation)
$$
\bbA^N[\Delta_\nu]= (\bbA'_\nu)^{m_0}[\Delta'_\nu].
$$
By the inductive assumption, the last quantity is divisible by $p_i^{\ell-1}$.
The conclusion follows by combining this with (\ref{multi-e9}).
\end{proof}

\begin{corollary}\label{multi-cor1}
Let $A\in\calm^+(\ZZ_M)$, where $M=\prod_{i=1}^K p_i^{n_i}$ and $p_1,\dots,p_K$ are distinct primes. Suppose that the assumptions of Proposition \ref{multi-prop} are satisfied, and, additionally, there exists $\alpha_0$ with
$0\leq\alpha_0< \alpha_1$ such that 
$$\Phi_{m_*}\nmid A,\ \hbox{ where } m_*=m_0p_i^{\alpha_0}.$$
Then $|A|\geq p_i^\ell$.
\end{corollary}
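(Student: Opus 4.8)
Here is the approach I would take. Since $\Phi_{m_*}\nmid A$ with $m_*=m_0p_i^{\alpha_0}$, Proposition \ref{cuboid} (the equivalence (i) $\Leftrightarrow$ (ii)) furnishes an $m_*$-cuboid $\Delta$ with $\bbA^{m_*}[\Delta]\neq 0$. The plan is then to combine two facts: (i) $p_i^\ell\mid\bbA^{m_*}[\Delta]$; and (ii) $|\bbA^{m_*}[\Delta]|\leq|A|$. Granting both, $\bbA^{m_*}[\Delta]$ is a nonzero integer multiple of $p_i^\ell$ of absolute value at most $|A|$, so $|A|\geq p_i^\ell$, as claimed. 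Fact (ii) is the easy one: the mask of any $m_*$-cuboid has coefficients in $\{-1,0,1\}$ and (by the Chinese Remainder Theorem) its vertices are pairwise distinct modulo $m_*$, so $|w_\Delta^{m_*}(x)|\leq 1$ for all $x\in\ZZ_{m_*}$; since $A\in\calm^+$ gives $w_A^{m_*}\geq 0$, we get $|\bbA^{m_*}[\Delta]|=\big|\sum_x w_A^{m_*}(x)w_\Delta^{m_*}(x)\big|\leq\sum_x w_A^{m_*}(x)=|A|$.

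The substance is fact (i). If $\alpha_0=0$ then $m_*=m_0$ is coprime to $p_i$, $\Delta$ is an $m_0$-cuboid, and (i) is exactly the conclusion of Proposition \ref{multi-prop}. For $\alpha_0\geq 1$ (so $0<\alpha_0<\alpha_1\leq n_i$) I would reduce to this case by a localization-and-rescaling argument, of the same kind used inside the proof of Proposition \ref{multi-prop}. Peeling off the single $p_i$-direction factor of $\Delta$, write $\Delta=\Delta^--\Delta^+$, where $\Delta^-(X)=X^c\prod_{j:p_j\mid m_0}(1-X^{d_jm_*/p_j})$ and $\Delta^+(X)=X^{c+d_im_*/p_i}\prod_{j:p_j\mid m_0}(1-X^{d_jm_*/p_j})$. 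For every $p_j\mid m_0$ we have $p_j\neq p_i$, hence $p_i^{\alpha_0}$ divides $d_jm_*/p_j=(d_jm_0/p_j)p_i^{\alpha_0}$, so the vertices of $\Delta^+$, and likewise those of $\Delta^-$, all lie in one $p_i^{\alpha_0}$-grid. Localizing $A$ to the corresponding $p_i^{\alpha_0}$-grid $\Lambda\subset\ZZ_M$, one invokes Lemma \ref{grid-split}: since $\alpha_j>\alpha_0$ for all $j$, one has $p_i^{\alpha_0}\mid D(m_j)=D(m_0)p_i^{\alpha_j-1}$, so $\Phi_{m_j}\mid A$ implies $\Phi_{m_j}\mid(A\cap\Lambda)$. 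Rescaling $A\cap\Lambda$ by $p_i^{\alpha_0}$ as in Lemma \ref{scaling} produces $A'\in\calm^+(\ZZ_{M/p_i^{\alpha_0}})$ with $\Phi_{m_0p_i^{\alpha_j-\alpha_0}}\mid A'$ for each $j$, where $1\leq\alpha_1-\alpha_0<\dots<\alpha_\ell-\alpha_0\leq n_i-\alpha_0$; the same rescaling carries $\Delta^+$ to an $m_0$-cuboid in $\ZZ_{M/p_i^{\alpha_0}}$ and preserves its evaluation. Proposition \ref{multi-prop} applied to $A'$ then gives $p_i^\ell\mid\bbA^{m_*}[\Delta^+]$, and symmetrically $p_i^\ell\mid\bbA^{m_*}[\Delta^-]$, hence $p_i^\ell\mid\bbA^{m_*}[\Delta]$.

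The main obstacle is the bookkeeping in the $\alpha_0\geq 1$ case: checking that the two halves $\Delta^{\pm}$ really sit inside single $p_i^{\alpha_0}$-grids, that passing to $A\cap\Lambda$ retains every divisibility $\Phi_{m_j}\mid(\cdot)$ — this is precisely where the hypothesis $\alpha_0<\alpha_1$ is used, through $p_i^{\alpha_0}\mid D(m_j)$ — and that after rescaling by $p_i^{\alpha_0}$ the set $A'$ still meets the hypotheses of Proposition \ref{multi-prop}, namely $\ell$ cyclotomic divisors sitting at strictly increasing positive $p_i$-exponents and base modulus $m_0$ coprime to $p_i$. Once this reduction is in place, the remainder is the short divisibility-versus-cardinality argument of the first paragraph. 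As an alternative to the grid localization, one could re-run the inductive proof of Proposition \ref{multi-prop} directly at the base modulus $m_*$, peeling one power of $p_i$ per cyclotomic divisor exactly as in Lemma \ref{multi-step1}; but reducing to the already-proved Proposition \ref{multi-prop} seems more economical.
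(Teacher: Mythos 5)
Your proposal is correct and follows essentially the same route as the paper: find an $m_*$-cuboid with nonzero evaluation, split it into its two faces perpendicular to the $p_i$ direction (each lying in a single $p_i^{\alpha_0}$-grid), localize and rescale via Lemmas \ref{grid-split} and \ref{scaling}, and invoke Proposition \ref{multi-prop}. The only (harmless) variations are that you apply the proposition to both faces to get $p_i^\ell\mid\bbA^{m_*}[\Delta]$ directly, where the paper selects the face with nonzero evaluation and reduces to the $\alpha_0=0$ case, and that you spell out the bound $|\bbA^{m_*}[\Delta]|\leq|A|$, which the paper uses without comment.
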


\begin{proof}
If $\alpha_0=0$, then there exists an $m_0$-cuboid $\Delta$ such that 
$\bbA^{m_0}[\Delta]\neq 0$. Combining this with Proposition \ref{multi-prop}, we see that
$$
|A|\geq |\bbA^{m_0}[\Delta]|\geq p_i^\ell,
$$
as claimed.

If $\alpha_0>0$, we find instead an $m_*$-cuboid $\Delta$ such that 
$\bbA^{m_*}[\Delta]\neq 0$. As in the proof of Lemma \ref{multi-step1}, write $\Delta(X)=\Delta^+(X) -\Delta^-(X)$, where $\Delta^+,\Delta^-$ are the faces of $\Delta$ perpendicular to the $p_i$ direction, so that each of $\Delta^+,\Delta^-$ must be contained in a single $p_i^{\alpha_0}$-grid. At least one of $\bbA^{m^*}[\Delta^+]$ and 
$\bbA^{m^*}[\Delta^-]$ must be nonzero. Assume without loss of generality that $\bbA^{m^*}[\Delta^+]\neq 0$, and let $\Lambda$ be the $p_i^{\alpha_0}$-grid containing $\Delta^+$. 
Consider the restriction $A\cap\Lambda$ of $A$ to $\Lambda$, and rescale it by a factor of $p_i^{-\alpha_0}$ as in Lemma \ref{scaling}. This reduces the proof to the case $\alpha_0=0$ as above.
The details are left to the reader.
\end{proof}

\noindent {\bf Example 6.1.}
Define $\calS_n$ for $n\in\NN$ as in (\ref{def-S}), with $A=B$.
Let $M:=lcm(S_A)=\prod_{i=1}^K p_i^{n_i}$, where $p_1,\dots,p_K$ are distinct primes. Let $m_0|(M/p_i^{n_i})$. Suppose that
$$
\{m_0p_i^{\alpha_1},m_0p_i^{\alpha_2},\dots,m_0p_i^{\alpha_\ell}\}\subset S_A
$$
for some $i\in\{1,\dots,K\}$ and $1\leq\alpha_1<\alpha_2<\dots<\alpha_\ell\leq n_i$. Assume furthermore that, in the notation of Section \ref{clusterdiv},
$$
\textsf{EXP} (i) : = \{\alpha_{1},\dots, \alpha_{\ell}\}.
$$
(In words, no other powers of $p_i$ appear in the prime factorization of elements of $S_A$.) 
By Corollary \ref{multi-cor1}, we have 
$|A|\geq p_i^\ell$. Since $|A|$ is relatively prime to all elements of $S_A$, the inequality must be strict. Thus the SLV construction in Section \ref{clusterdiv}, with the same choice of $i$, is sufficient in this case. It follows that $\calS_n$ satisfy the conclusions of Theorem \ref{Favard-two-primes} in this case.

\medskip

The corollary below extends Lemma \ref{multi-step1} in a different direction.

\begin{corollary}\label{multi-cor}
Let $A\in\calm^+(\ZZ_M)$, where $M=\prod_{i=1}^K p_i^{n_i}$ and $p_1,\dots,p_K$ are distinct primes. Suppose that $mp_1\dots p_I|M$ for some $I<K$. Assume further that
$\Phi_{p_im}|A$ for $i=1,\dots,I$, but $\Phi_m\nmid A$. Then 
$A(1)\geq p_1\dots p_I$.
\end{corollary}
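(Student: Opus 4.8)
The plan is to locate a single $m$-cuboid $\Delta$ whose $A$-evaluation $\bbA^m[\Delta]$ is simultaneously forced to be a nonzero multiple of $p_1\cdots p_I$, and then to observe that for a multiset with nonnegative weights any cuboid evaluation is bounded in absolute value by the total cardinality. Putting these together gives $|A|\geq p_1\cdots p_I$.

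Concretely, I would proceed as follows. First, since $\Phi_m\nmid A$, the equivalence (i)$\Leftrightarrow$(ii) of Proposition \ref{cuboid} furnishes an $m$-cuboid $\Delta$ with $\bbA^m[\Delta]\neq 0$; fix it. Next, for each $i\in\{1,\dots,I\}$ note that $mp_i\mid mp_1\cdots p_I\mid M$, so $m\mid M/p_i$; since also $\Phi_{p_im}\mid A$ by hypothesis, Lemma \ref{multi-step1} applies and gives $p_i\mid\bbA^m[\Delta]$. As $p_1,\dots,p_I$ are distinct primes, their product divides $\bbA^m[\Delta]$, and since $\bbA^m[\Delta]\neq 0$ we obtain $|\bbA^m[\Delta]|\geq p_1\cdots p_I$. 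Finally, because $A\in\calm^+$ the induced weights $w_A^m$ are nonnegative with $\sum_{x\in\ZZ_m}w_A^m(x)=A(1)=|A|$, while the cuboid weights $w_\Delta^m$ take values in $\{-1,0,1\}$ (the vertices of an $m$-cuboid are distinct points carrying one vertex at $c$ and alternating $\pm1$ weights). Hence $|\bbA^m[\Delta]|\leq\sum_{x}w_A^m(x)=|A|$, and combining the two bounds finishes the proof.

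There is no serious obstacle here: the crux is Lemma \ref{multi-step1}, which is already available, applied once for each prime $p_i$. The only points requiring a moment's care are the bookkeeping that the hypothesis $mp_1\cdots p_I\mid M$ indeed yields $m\mid M/p_i$ for every $i$, and the elementary fact that an $m$-cuboid evaluation against a nonnegative multiset cannot exceed $|A|$ in modulus; neither is an essential difficulty.
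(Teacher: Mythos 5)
Your proposal is correct and follows essentially the same route as the paper's proof: apply Lemma \ref{multi-step1} once per prime $p_i$ to get $p_1\cdots p_I\mid\bbA^m[\Delta]$, use Proposition \ref{cuboid} and $\Phi_m\nmid A$ to find a cuboid with nonzero evaluation, and bound $|\bbA^m[\Delta]|\leq A(1)$ via the nonnegativity of the weights. The extra bookkeeping you supply (that $mp_1\cdots p_I\mid M$ gives $m\mid M/p_i$, and that cuboid weights are $\pm1$ on distinct vertices) is correct and merely makes explicit what the paper leaves implicit.
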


\begin{proof}
By Lemma \ref{multi-step1}, we have $p_1\dots p_I|\bbA^m[\Delta]$ for every $m$-cuboid $\Delta$. Since 
$\Phi_m\nmid A$, by Proposition \ref{cuboid} there exists an $m$-cuboid $\Delta$ such that $\bbA^m[\Delta]\neq 0$. Hence for that $\Delta$, we have
$$
A(1)\geq |\bbA^m[\Delta]|\geq p_1\dots p_I.
$$
\end{proof}

\begin{remark}\label{too-much}
A stronger version of 
Corollary \ref{multi-cor} can be obtained, with the same proof, by using Proposition \ref{multi-prop} instead of Lemma \ref{multi-step1}. Since the statement would be significantly more complicated without contributing new ideas, we omit it here.
\end{remark}

\noindent {\bf Example 6.2.}
The following extends the example in Section \ref{571113}.
Let $A\in\calm^+(\ZZ_M)$, with $M=p_1\dots p_k Q$, where 
$p_1,\dots,p_k$ are distinct primes and $(p_1\dots p_k,Q)=1$. Suppose that 
$S_A=\{s_1,\dots,s_k\}$, where for each $s_j$ we have
$p_j|s_j|p_jQ$.

We claim that $|A|> \max(p_1,\dots,p_k)$. Indeed, let $j\in\{1,\dots,k\}$.
Since $\Phi_{s_j}|A$ and $\Phi_{s_j/p_j}\nmid A$, Corollary \ref{multi-cor} with $m=s_j/p_j$ implies that $|A|\geq p_j$. Since we have $(|A|,s_j)=1$ by the definition of $S_A$, the inequality must be strict.

For the purpose of an application to the Favard length problem, the single-cluster construction in Section \ref{oneclustergamma} works for such $A$.
Indeed, let $\mathcal{C}: = S_A$, and let $Q$ be as above, so that
$T=\max (p_1,\dots,p_k)$. In light of the upper bound above, this is sufficient.

\medskip

\noindent {\bf Example 6.3.}
Let $p,q,r$ be distinct primes. Suppose that $A\in\calm^+(\ZZ_M)$, where $pqr|M$, and that 
\begin{equation}\label{pqr-e1}
\{pq,pr,qr\}\subset S_A.
\end{equation}
By the definition of $S_A$, this implies that
\begin{equation}\label{pqr-e2}
(|A|, pqr)=1,
\end{equation}
and in particular none of $\Phi_p, \Phi_q,\Phi_r$ can divide $A$.

Let us try to apply the single-cluster construction in Section \ref{oneclustergamma} in this case.
With $\mathcal{C}: = S_A$, at most one of $p,q,r$ may divide $Q$, so that
$T$ must be divisible by at least two of them. Hence we need (at least) a bound of the form $|A|>\min(pq, pr, qr)$.

Corollary \ref{multi-cor} does indeed provide such a bound. Applying the corollary to $A$ with $m=q$, and using that $\Phi_{pq}$ and $\Phi_{qr}$ divide $A$ but $\Phi_q$ does not, we see that $|A|\geq qr$. By (\ref{pqr-e2}), the inequality must be strict. Interchanging the primes, we get that 
\begin{equation}\label{pqr-e3}
|A|>\max (pq, pr, qr).
\end{equation}
This is in fact sufficient, with e.g., $Q=q$ and $T=pr<|A|$.

The following example shows that the lower bound in (\ref{pqr-e3}) is essentially optimal up to a multiplicative constant.
Define $A\subset \ZZ_M$, where $pqr|M$, so that
$$
A=(a*F_p*F_q) \cup (a'*F_q*F_r) \cup (a''*F_p*F_r).
$$
(With $M/pqr$ sufficiently large, we can choose $a,a',a''$ so that they belong to different $D(M)$-grids. Then the three ``components" above are disjoint.) 
Then (\ref{pqr-e1}) holds, and 
$|A|=pq+pr+qr$, matching the order of magnitude of (\ref{pqr-e3}) if $p,q,r$ have about the same size.

We note, however, that the argument does not extend to situations when two or more elements of $S_A$ do not have a common ``direct parent''. For example, Corollary \ref{multi-cor} does not provide any improvement on the single-step bound from Lemma \ref{multi-step1} if $S_A=\{p^2q^3, p^4r^5, q^6 r^7\}$, nor does the stronger version of it mentioned in Remark \ref{too-much}.

\section{Proof of Theorem \ref{favard-small}}\label{small-card}

We will rely on the existing results on vanishing sums of roots of unity as in (\ref{vanishing}) with small $k$ \cite{PR}, \cite{CDK}. A vanishing sum of the form (\ref{vanishing}) is called {\em minimal} if there is no proper subset $\{i_1,\dots,i_\ell\}\subsetneqq\{1,\dots,k\}$ such that $z_{i_1}+\dots+z_{i_\ell}=0$. 
For small $k$, all minimal vanishing sums of roots of unity with $k$ elements can be classified and enumerated explicitly. 
Such an enumeration is provided in \cite[Table 1]{PR} for $k\leq 12$, and extended in  \cite{CDK} to $k\leq 16$.

The results of \cite{PR}, \cite{CDK} can be converted to our language of arrays and fibers from Section \ref{cyclo-tools} as follows. Recall from Section 1 that 
any vanishing sum of roots of unity can be phrased in terms of cyclotomic divisibility of polynomials. Conversely,  if $A(X)$
is the mask polynomial corresponding to some $A\subset\NN$ and $\Phi_N(X)|A(X)$ for some $N \in\NN\setminus\{1\}$, then we must have
$$A(e^{2\pi i /N})=\sum_{a\in A}e^{2\pi i a/N}=0.
$$
In other words, $A(e^{2 \pi i /N})$ forms a vanishing sum of roots of unity. We may further reduce modulo $N$, so that $A$ mod $N$ is a multiset in $\calm^+(\ZZ_N)$. For each fixed $N$, the correspondence between multisets $A$ mod $N$ in $\calm^+(\ZZ_N)$ satisfying $\Phi_N(X)|A(X)$ and vanishing sums of roots of unity of the form
\begin{equation}\label{small-e2}
\sum_{a\in\ZZ_N} w(a) e^{2\pi i a/N}=0
\end{equation}
is one-to-one. The above sum is minimal if and only if $A$ has the following minimality property.

\smallskip\noindent
{\bf Property (M).} There is no multiset $A'\in\calm^+(\ZZ_N)$ such that $A\neq A'$, $\Phi_N(X)|A'(X)$, and $w_{A'}(x)\leq w_A(x)$ for all $x\in\ZZ_N$. 
\smallskip

If the sum is not minimal, its decomposition into minimal vanishing sums corresponds to writing 
$A(X)$ as a sum of polynomials $A_j(X)$ which have that property.  This decomposition into minimal relations is the focus of \cite{PR} and \cite{CDK}; that such a decomposition exists can be proven using induction upon the weight functions $w$ appearing in the vanishing sums \eqref{small-e2}.

 By the equivalence (i) $\Leftrightarrow$ (iii) in Proposition \ref{cuboid}, all polynomials $A(X)$ which do have the minimality property must fall into one of the following categories:
\begin{itemize}
\item $A(X)$ is an $N$-fiber in some direction.
\item $A(X)$ is an ``irreducible" linear combination of $N$-fibers as in Proposition \ref{cuboid} (iii) that cannot be expressed as a linear combination of $N$-fibers with nonnegative coefficients.
\end{itemize}

The first category of minimal sums is referred to in \cite{PR}, \cite{CDK} as $R_p$, where the prime $p$ indicates the direction of the $N$-fiber. Such configurations can only occur when $p|N$. The second category is described using a language of recursive relations. For sets of cardinality at most 10, we will only need the $(R_p:kR_q)$ notation of \cite{PR}. Here,  the integers $p,q$ are distinct primes and we always have $1\leq k<p$. 

In our language, a given configuration of type $(R_p : k R_q)$ is constructed as follows. We choose some $N$ with $2 pq | N$ and work in $\ZZ_N$. Start with an $N$-fiber $x*F^N_p$ in the $p$ direction. Choose $k$ points $x_1,\dots,x_k$ of that fiber, and subtract $x_j*F^N_q$ for each $j$. This ``cancels" the points $x_1,\dots,x_k$, which now have weights 0, and introduces $k(q-1)$ points with negative weight $-1$. Finally, add a $N$-fiber in the 2 direction through each point with weight $-1$. This ``cancels" all the negative weights, and introduces additional $k(q-1)$ points with weight 1. The total weight of the configuration, modulo $\ZZ_N$, is $(p-k)+k(q-1)=p+kq-2k$.

\begin{example}\label{7example}
An example of a configuration of type $(R_5:2R_3)$ is provided by a multiset $\Xi\in\calm^+(\ZZ_N)$ with the mask polynomial
\begin{equation}\label{small-e1}
\begin{split}
\Xi(X) &= F^N_5(X)- (X^{N/5}+X^{2N/5})F^N_3(X) + \left(\sum_{i,j=1}^2 X^{\frac{iN}{5}+\frac{jN}{3}} \right)F^N_2(X)
\\
&= 1+X^{3N/5}+X^{4N/5}+ \sum_{i,j=1}^2 X^{\frac{iN}{5}+\frac{jN}{3}+\frac{N}{2}},
\end{split}
\end{equation}
where $N$ is divisible by $2\cdot 5\cdot 3=30$. We have $\Xi(1)=5+2\cdot 3-2\cdot 2=7$. Note that there are multiple configurations of the same type, depending on the placement of fibers. We illustrate one presentation below, which highlights the fiber geometry underpinning equation \eqref{small-e1}. The drawings use the coordinate representation introduced at the beginning of Section \ref{grids-fibers}.

\begin{figure}[H]
\centering
\includegraphics[scale=.1]{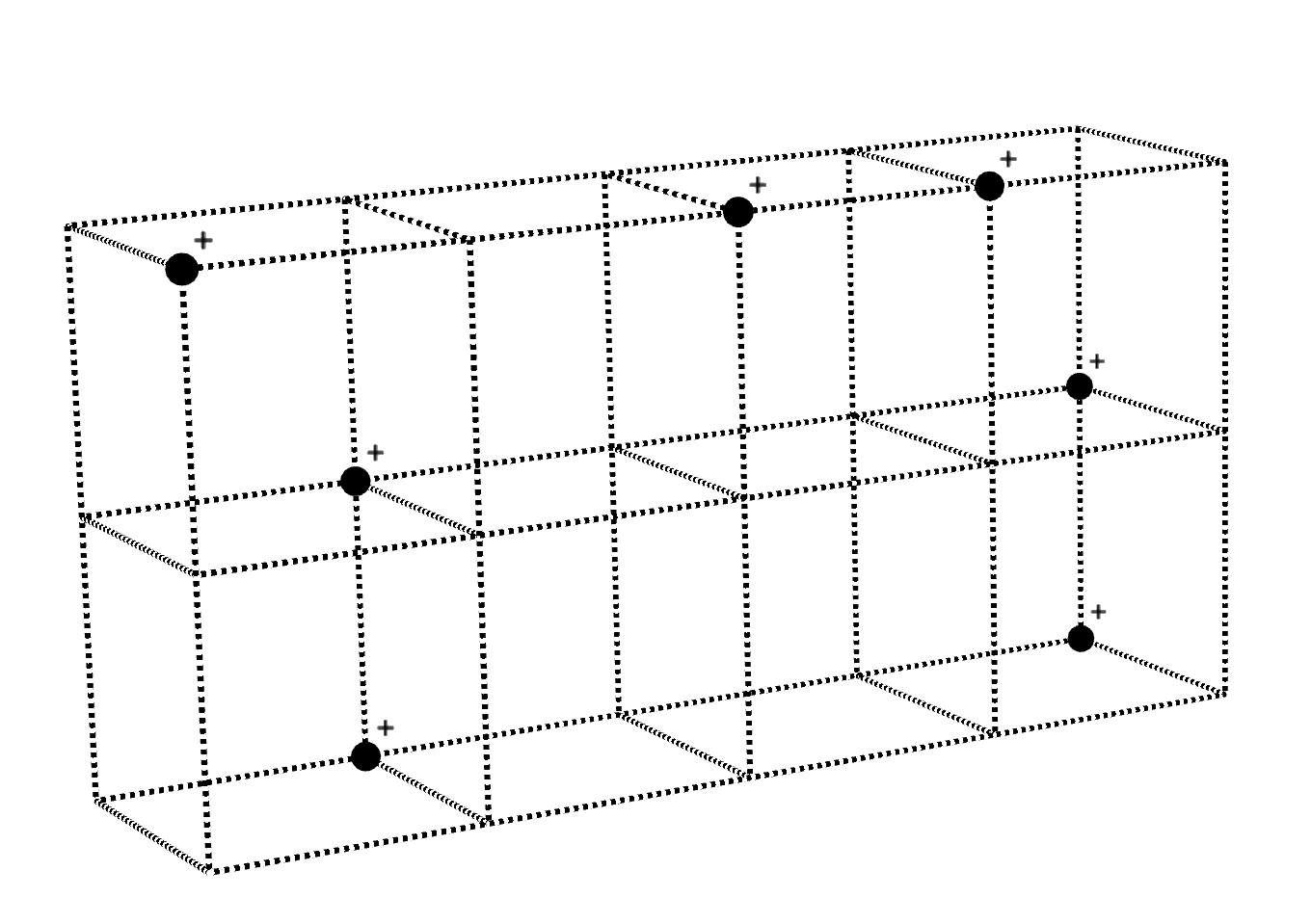}
\caption{We construct the above multiset $\Xi \in \mathcal{M}^+ (\mathbb{Z}_{30})$ of cardinality $7$ by combining $2$-fibers, $3$-fibers and $5$-fibers with appropriately chosen $\pm 1$ weights.}
\end{figure}

\begin{figure}[H]
\centering
\includegraphics[scale=.1]{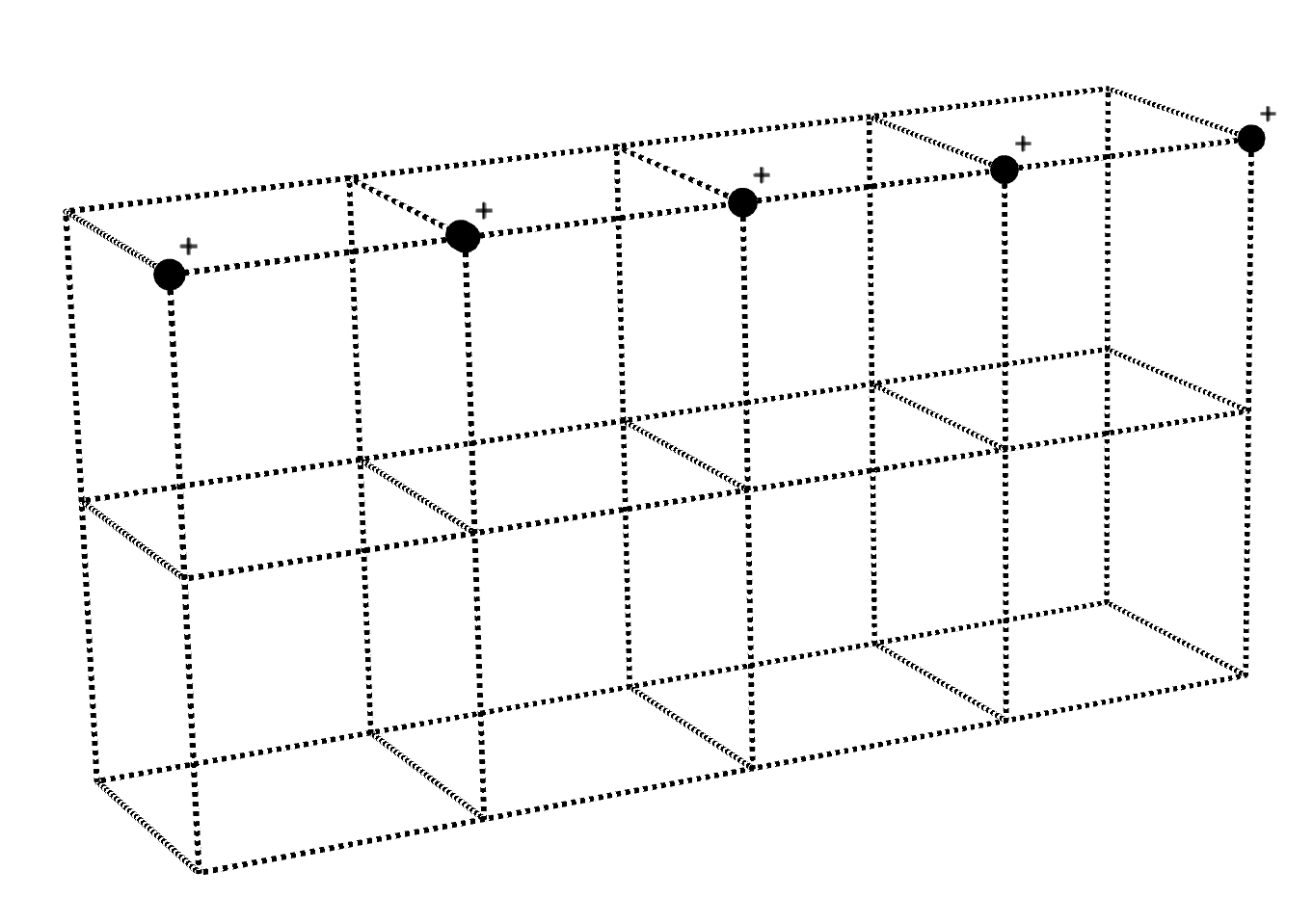}
\caption{Begin with any $30$-fiber in the $p = 5$  direction. In this example, we chose $x *F_5^{30} = (0,0,2) * F_5^{30}$. Assign weight $+1$ to this fiber.}
\end{figure}

\begin{figure}[H]
\centering
\includegraphics[scale=.1]{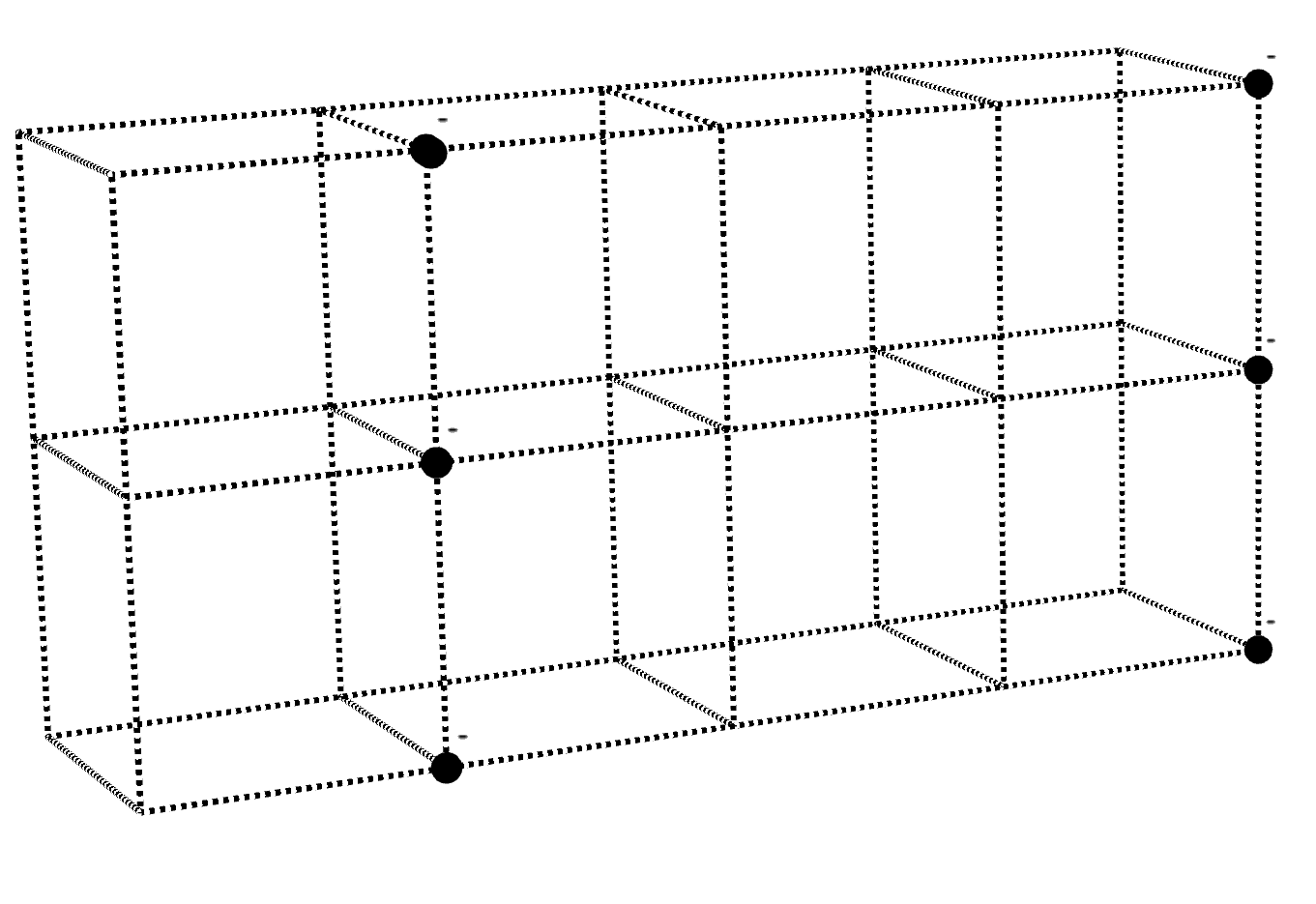}
\caption{Select points $x_{j_1}, x_{j_2}$ from the previous fiber and subtract $x_{j_1} * F_{3}^{30}$ and $x_{j_2} * F_{3}^{30}$. In this example, we chose $x_{j_1} = (1,0,2)$ and $x_{j_2} = (4,0,2)$.  This introduces $k(q - 1) = 2 \times 2 = 4$ points of weight $-1$ to our initial set.}
\end{figure}

\begin{figure}[H]
\centering
\includegraphics[scale=.1]{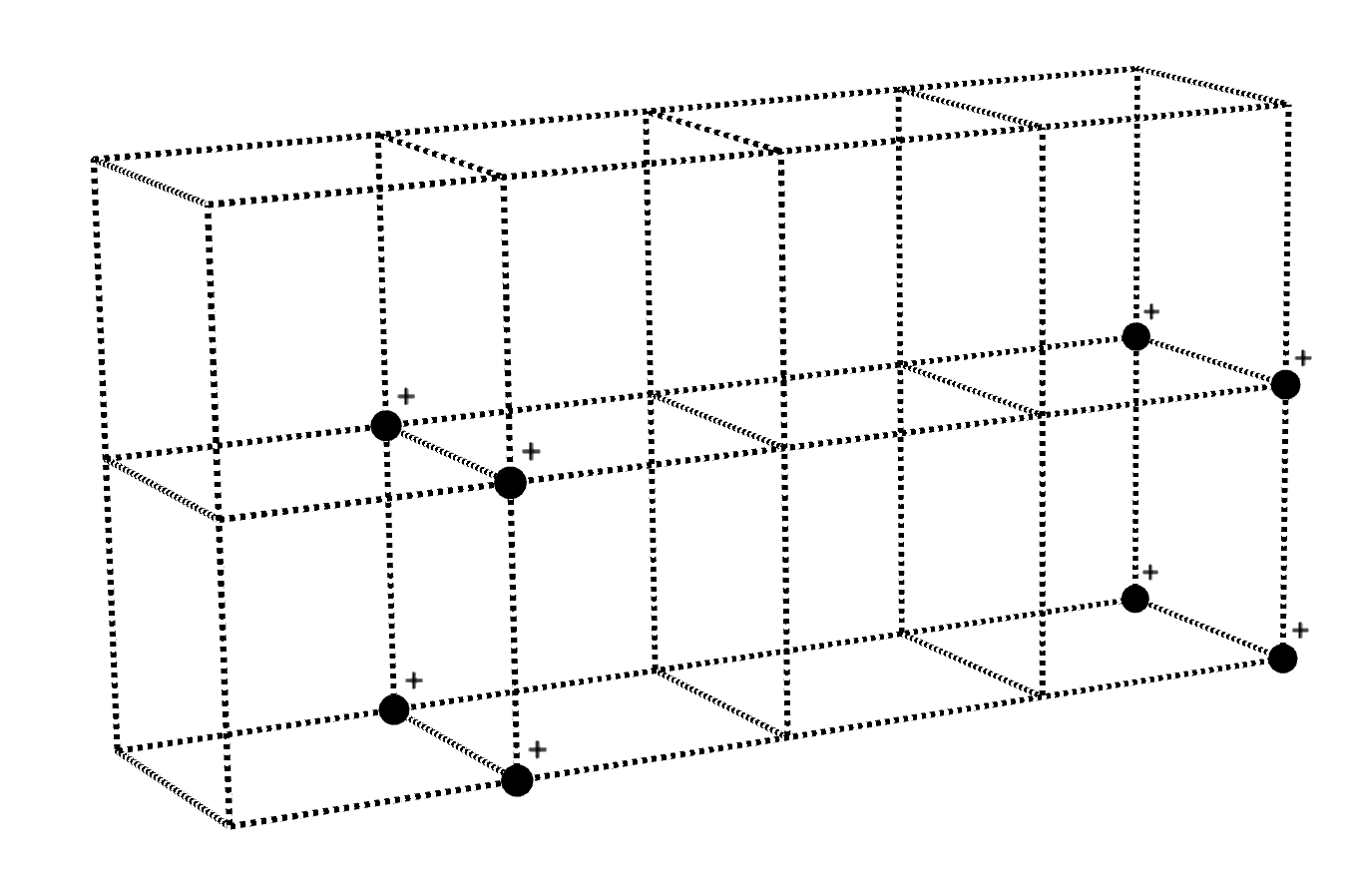}
\caption{We cancel the remaining points of the fibers $x_{j_1} * F_q^{N}$ and $x_{j_2}*F_q^{N}$ by adding $k(q - 1)$ $30$-fibers in the $2$ direction with weight $+1$. After cancellation, this adds $k(q-1)$ points of weight $+1$ to our set.}
\end{figure}

\end{example}

We now turn to sets of small cardinality. 
By Proposition \ref{lb-general-gamma}
and the proof of  Theorem \ref{Favard-two-primes} in Section \ref{sec2}, it suffices to prove the following.

\begin{lemma}\label{small-lemma} 
Let $A\subset\NN$. Assume that $|A|\leq 10$, and that $S_A\neq \emptyset$.
Then there exists a prime $p_1$, relatively prime to $|A|$, such that:
\begin{itemize}
\item[(i)] $p_1|s$ for all $s\in S_A$,

\item[(ii)] in the notation of Proposition \ref{lb-general-gamma}, we have $p_1^{E_1}<|A|$.
\end{itemize}
\end{lemma}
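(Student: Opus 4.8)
The plan is to cut the problem down to a handful of cases and, in each, either invoke Theorem \ref{main theorem} directly or rule the case out by a hands-on analysis of fiber decompositions.

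\emph{Step 1: restricting $|A|$.} First I would observe that any $s\in S_A$ is composite, since an element of $S_A$ cannot be a prime power (Remark \ref{remark1}), and that every prime dividing $s$ is coprime to $|A|$ because $(s,|A|)=1$. If moreover $s=p^{\alpha}q^{\beta}$ has exactly two prime factors, then by Lemma \ref{structure-thm} the multiset $A\bmod s$ is a nonnegative combination $P_pF_p^{s}+P_qF_q^{s}$ of $s$-fibers; since $p\nmid|A|$ forces $P_q(1)\ge 1$ and symmetrically $P_p(1)\ge 1$, we get $|A|=pP_p(1)+qP_q(1)\ge p+q$. Feeding these facts into Theorem \ref{LL-thm} and checking the possibilities, one finds $S_A=\emptyset$ unless $|A|\in\{5,7,8,9,10\}$, and that in those cases the set of primes dividing $s_A$ is forced to be one of: $\{2,3\}$ if $|A|=5$; $\{2,3\}$, $\{2,5\}$ or $\{2,3,5\}$ if $|A|=7$; $\{3,5\}$ if $|A|=8$; $\{2,5\}$, $\{2,7\}$ or $\{2,5,7\}$ if $|A|=9$; and $\{3,7\}$ if $|A|=10$. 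In each listed case with two primes, every $s\in S_A$ divides $s_A$ and is not a prime power, hence is divisible by both primes.

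\emph{Step 2: the two-prime cases.} Whenever $s_A$ has exactly two prime factors $p<q$, both divide every $s\in S_A$ and both are coprime to $|A|$. Applying Proposition \ref{2primebound} with $p_1=p$ (using $q\nmid|A|$), we obtain $|A|\ge p_1^{E_1}$, and the inequality is strict since $p_1\nmid|A|$; together with the divisibility just noted this gives the lemma. This settles $|A|\in\{5,8,10\}$ outright, and $|A|\in\{7,9\}$ except when $s_A$ is divisible by three primes.

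\emph{Step 3: the three-prime cases, which I expect to be the main obstacle.} Suppose $|A|=7$ with $\{2,3,5\}$ dividing $s_A$ (the case $|A|=9$, $\{2,5,7\}$ is identical with $(3,5)$ replaced by $(5,7)$). Then $S_A$ contains an $s_1$ with prime support $\{2,3\}$, forcing $A\bmod s_1$ to be two $2$-fibers and one $3$-fiber, and an $s_2$ with prime support $\{2,5\}$, forcing $A\bmod s_2$ to be one $2$-fiber and one $5$-fiber (both fiber counts being the unique nonnegative solutions of $7=2a+3b$, resp.\ $7=2a+5b$). I would then compare the images of $A$ in residues modulo $2^{v_2(s_1)}$ and modulo $2^{v_2(s_2)}$: a $5$-fiber lies in a single line in the $5$-direction, so its five points share a residue mod $2^{v_2(s_2)}$, whence the projection of $A$ to residues mod $2^{\min(v_2(s_1),v_2(s_2))}$ carries weight $\ge 5$ at one value; but reducing the decomposition $A\bmod s_1$ shows the same projection is supported on an antipodal configuration on which no value has weight exceeding $3+2=5$, and a value of weight $5$ there cannot be accompanied by two further weight-one values nor by any weight-$\ge 2$ value other than its antipode — a profile the $s_2$-picture nonetheless demands. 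A short case split on the sign of $v_2(s_1)-v_2(s_2)$ then yields a contradiction, so $s_A$ has only two prime factors and Step 2 applies. One must also exclude the possibility that a single $s\in S_A$ has three distinct prime factors; this again follows by comparing fiber decompositions and is where the classification of small minimal vanishing sums of roots of unity in \cite{PR}, \cite{CDK} enters. The delicate, error-prone bookkeeping of $2$-adic residue classes in this last step is the real difficulty; the first two steps are essentially formal once Theorem \ref{main theorem} is available.
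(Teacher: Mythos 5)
There are two genuine gaps. First, Step 1's enumeration of the prime support of $s_A$ does not follow from Theorem \ref{LL-thm}: Lam--Leung only asserts that $|A|=\sum a_jp_j$ for \emph{some} nonnegative $a_j$, so a prime may divide an element $s\in S_A$ while carrying coefficient $a_j=0$ in that representation. For instance, with $|A|=5$ one can have $s=2\cdot 3\cdot 7\in S_A$, with $A$ mod $s$ equal to a $2$-fiber plus a $3$-fiber in $\ZZ_{42}$; then $7\mid s_A$. In that situation your Step 2 breaks down, because Theorem \ref{main theorem} requires \emph{every} $m_j$ to be of the form $p^{\alpha_j}q^{\beta_j}$ for the same two primes, and $E_1$ is counted over all of $S_A$, so the offending divisors cannot be discarded. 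Conclusion (i) survives (the two ``structural'' primes still divide every $s$), but (ii) then needs a different argument --- this is exactly what the paper's grid analysis supplies, by showing directly that the exponent of the \emph{larger} fiber direction is constant across $S_A$, so that $E_1=1$ for that prime.

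Second, and more seriously, Step 3 sets out to derive a contradiction in the $|A|=7$, three-prime case, but no contradiction exists: the irreducible minimal vanishing sum of type $(R_5:2R_3)$ (Example \ref{7example}) has exactly $7$ terms and forces $2\cdot3\cdot5\mid s$, so sets $A$ with $|A|=7$ and $30\mid s$ for some $s\in S_A$ genuinely occur; your plan to ``exclude the possibility that a single $s\in S_A$ has three distinct prime factors'' is therefore doomed. Your sketch only considers decompositions of $A$ mod $s$ into fibers with nonnegative multiplicities, which is guaranteed (via Lemma \ref{structure-thm}) only when $s$ has two prime factors; with three primes the irreducible structures enter, and this case is in fact the heart of the paper's proof of the lemma. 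There one takes $p_1=2$ (which divides every admissible structure for $|A|=7$) and proves $E_1\le 2$ by analyzing how $A$ can simultaneously reduce, modulo several $s_i$ with distinct $2$-adic valuations, to the structures (a)--(c). That said, your two-prime Step 2 --- invoking Proposition \ref{2primebound} with the smaller prime and deducing strictness from $p_1\nmid|A|$ --- is a clean alternative to the paper's grid argument in the cases where it genuinely applies, namely when every element of $S_A$ has only the two structural prime factors.
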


Lemma \ref{small-lemma} replaces Theorem \ref{main theorem} in the proof of  Theorem \ref{Favard-two-primes}. In this case, the cardinality of $|A|$ is given, and we just need to find an appropriate prime $p_1$ to use in Proposition \ref{lb-general-gamma}. The lemma can fail for larger sets: for instance, Section \ref{571113} provides a counterexample with $|A|=18$.

\begin{proof}[Proof of Lemma \ref{small-lemma}.]
Let $A$ satisfy the assumptions of the lemma. 
%$A\subset\NN$ and $|A|\leq 10$. Assume further that ,  as such sets have already been studied in \cite{BLV}. 
Let $N\in S_A$ (note that $N$ divides $s_A$ but does not have to be equal to it). 
We write $A(X)$ mod $X^N-1$ as a sum of mask polynomials $A_j(X)$ satisfying the minimality condition (M) in $\ZZ_N$, each of which must be a fiber or an irreducible structure (as discussed previously). Since $N\in S_A$, we have $(N,|A|)=1$. Therefore:
\begin{itemize}
\item If $A_j$ is an $N$-fiber in the $p$ direction for some $p|N$, then $(p,|A|)=1$.
\item If $A_j$ is an irreducible structure of type $(R_p:kR_q)$ as described above, we must have $2pq|N$, hence $(2pq,|A|)=1$.
\end{itemize}
Considering all the minimal structures listed in \cite{PR}, and applying the above constraints, we are left with 
the following cases:
\begin{itemize}
\item $|A|=5$, and $A$ is a union of one $N$-fiber in the 2 direction and one $N$-fiber in the 3 direction,
\item $|A|=7$, and $A$ is one of the following:
\begin{itemize}
\item[(a)] a union of one $N$-fiber in the 2 direction and one $N$-fiber in the 5 direction,
\item[(b)] a union of two $N$-fibers in the 2 direction and one $N$-fiber in the 3 direction,
\item[(c)] an irreducible structure of type 
$(R_5:2R_3)$ (see Example \ref{7example}),
\end{itemize}

\item $|A|=8$, and $A$ is a union of one $N$-fiber in the 3 direction and one $N$-fiber in the 5 direction,
\item $|A|=9$, and $A$ is a union of two $N$-fibers in the 2 direction and one $N$-fiber in the 5 direction,
\item $|A|=10$, and $A$ is a union of one $N$-fiber in the 3 direction and one $N$-fiber in the 7 direction,

\end{itemize}

We first consider the case when $A$ is a disjoint union of $N$-fibers in 2 different directions. Let $p_1=3$ if $|A|=5$, $p_1=5$ if $|A| \in\{8,9\}$, and $p_1=7$ if $|A|=10$. We claim $E_1=1$, so that the conclusion of the lemma holds in each case. 

Let $\alpha$ be the exponent of $p_1$ in the prime factorization of $N$; since $A$ contains an $N$-fiber in the $p_1$ direction, we must have $\alpha\geq 1$. Suppose that $s\in S_A$, $s\neq N$, and that the exponent of $p_1$ in the prime factorization of $s$ is $\beta$. If $\beta>\alpha$, then $A$ mod $s$ has nonempty intersection with at least $p_1$ different $D(s)$-grids in $\ZZ_s$. By Lemma \ref{grid-split}, we must have $\Phi_s|A\cap \Lambda$ for each such grid, hence $A$ mod $s$ can be written as a union of at least $p_1$ non-empty multisets $A_j\in\calm^+(\ZZ_s)$ with $\Phi_s|A_j$ for all $j$. But that is not compatible with the above list of permitted structures. If $\beta<\alpha$, the same argument applies, but with $N$ and $s$ interchanged.

Assume now that $|A|=7$, and let $p_1=2$. For each $s\in S_A$, $A$ mod $s$ has one of the structures listed in (a)-(c) above, hence $2|s$. To complete the proof of the lemma, we need to prove that $E_1\leq 2$. We start with an auxiliary result.

\medskip\noindent
{\bf Claim.} Suppose that $B\subset \NN$ satisfies $|B|\leq 4$, and that $\Phi_{s_1}(X)\Phi_{s_2}(X)|B(X)$ for some $s_1,s_2\in\NN\setminus\{1\}$ such that the exponents $\alpha_1,\alpha_2$ of 2 in the prime factorization of $p_1$ and $p_2$ satisfy $1\leq\alpha_1<\alpha_2$. Then one of the following holds:
\begin{itemize}
\item[(i)] $|B|=3$, and $B$ mod $s_i$ is a translate of $F^{s_i}_3$ for both $i=1$ and $i=2$,
\item[(ii)] $|B|=4$, and $B$ mod $s_2$ is a translate of a ``double fiber" in the 2 direction, with 
\begin{equation}\label{small-e3}
B(X)=X^b(1+X^{s_2/2})(1+X^{c}) \mod X^{s_2}-1
\end{equation}
for some $b,c\in \ZZ_{s_A}$ such that $c\equiv s_1/2$ mod $s_1$.
\end{itemize}

\smallskip\noindent
\begin{proof}[Proof of Claim.] Since the smallest irreducible structure has 5 elements, $B$ mod each $s_i$ must be a union of $s_i$-fibers in the directions of 2 or 3. If $B$ mod $s_1$ contains a translate of $F^{s_1}_3$, then $|B|=3$ and we are in case (i). Suppose now that $B$ mod $s_1$ is a union of (one or two) 2-fibers. Then $B$ mod $s_1$ has nonempty intersection with at least two $2^{\alpha_1}$-grids in $\ZZ_{s_1}$. It follows that $
B$ mod $s_2$ has nonempty intersection with at least two $2^{\alpha_1}$-grids $\Lambda_1,\Lambda_2$ in $\ZZ_{s_2}$, and by Lemma \ref{grid-split}, we must have $\Phi_{s_2}|B\cap \Lambda_i$ for $i=1,2$. Thus $
B\cap \Lambda_i$ must be an $s_2$-fiber in the 2 direction for each $i\in\{1,2\}$. This implies the conclusion of case (ii).
\end{proof}

We now return to the proof of Lemma \ref{small-lemma} in the case $|A|=7$, with $p_1=2$. Assume for contradiction that $s_i\in S_A$ for $i=1,2,3$, with $1\leq\alpha_1<\alpha_2<\alpha_3$, where $\alpha_i$ is the exponent of 2 in the prime factorization of $A$. 

First, $A$ mod $s_1$ must have one of the forms listed in (a)-(c) above with $N=s_1$. In each case, it follows that $A$ has nonempty intersection with at least two $2^{\alpha_1}$-grids in $\ZZ_{s_1}$, therefore also in $\ZZ_{s_2}$. For each such grid $\Lambda_j$, $A\cap\Lambda_j$ must be divisible by $\Phi_{s_2}\Phi_{s_3}$, and at least one such set must have cardinality at most 3. Applying the claim above, we see that there can be at most two such grids $\Lambda_1$ and $\Lambda_2$, and that (after possibly relabelling the grids) $A_i:=A\cap\Lambda_i$ with $i=1,2$ satisfy, respectively, the conclusions (i) and (ii) of the claim.

We now return to $\ZZ_{s_1}$, where $A_1$ and $A_2$ are multisets with $|A_1|=3$ and $|A_2|=4$ contained in disjoint $2^{\alpha_1}$-grids. Considering the structures in (a)-(c), we see that only (c) is compatible with this.

Let $\gamma_i$ be the exponent of 3 in the prime factorization of $s_i$ for $i=1,2,3$. Since $A_1$ must be a fiber in the 3 direction in both $\ZZ_{s_2}$ and $\ZZ_{s_3}$, we have $\gamma_2=\gamma_3$. Since the points of $A_1$ form a part of an $s_1$-fiber in the 5 direction in $\ZZ_{s_1}$, we must have $\gamma_1<\gamma_2$. Then, however, $A_2$ is contained in a single $3^{\gamma_1}$-grid in $\ZZ_{s_1}$, so that $A$ cannot form an
$(R_5:2R_3)$ structure there. This exhausts all possible cases, and ends the proof of the lemma.
\end{proof}

As mentioned in the introduction, it is likely that the cardinality bound $|A|\leq 10$ could be improved further with additional work along similar lines. However, for $|A|\geq 11$ the number of cases to consider increases rapidly, making the task significantly more time-consuming.

%%%%%%%%%%%%%%%%%%%%%%%%%%%%%%%%%%%%%

%%% AUTHOR: optional appendix here
%%\appendix %% you may comment this out if no Appendix
%%\section*{Appendix}
%%\section{Improving the constants}
%%Material is placed here as needed.

%%% AUTHOR: optional acknowledgments here
\section*{Acknowledgments} %%  you may comment this out if no Ackno
We would like to thank the anonymous referee for valuable comments.

%%% AUTHOR:
%%% Bibliography goes here. Note that the arXiv cannot process bibtex
%%% or biber bibliographies.  Example of acceptable bibliograpy format:
\bibliographystyle{amsplain}

%% AUTHOR: You can generate such a bibliography from a .bib file by 
%% running pdflatex/bibtex/pdflatex/pdflatex and then pasting the .bbl file
%% between \begin{thebibliography} and \end{bibliography}

%%% AUTHOR: Include a short description of each author following the
%%% structure below. Use the same short tags used previously.  
%%% Use \imageat{} and \imagedot{} instead of "@" and "." in
%%% email addresses-this replaces the symbols with graphics to avoid 
%%% e-mail address harvesting from the .pdf file
\begin{dajauthors}
\begin{authorinfo}[ilaba]
  Izabella {\L}aba\\
  University of British Columbia\\
  Vancouver, Canada\\
  ilaba{\@}math.ubc.ca \\
  \url{https://personal.math.ubc.ca/~ilaba/}
\end{authorinfo}
\begin{authorinfo}[cmarshall]
  Caleb Marshall\\
  University of British Columbia\\
  Vancouver, Canada\\
  cmarshall{\@}math.ubc.ca \\
  \url{https://www.math.ubc.ca/user/2587}
\end{authorinfo}
\end{dajauthors}

\end{document}